\newcommand{\PSPACE}{\mathsf{PSPACE}}
\newcommand{\NP}{\mathsf{NP}}
\newcommand{\LFP}{\mathsf{LFP}}
\newcommand{\ESO}{\mathsf{ESO}}
\newcommand{\E}{\mathbb{E}}
\newcommand{\FOP}{\mathsf {FO}[\oplus]}
\newcommand{\FOM}{\mathsf {FO}[\Mod_q]}
\newcommand{\FO}{\mathsf {FO}}
\newcommand{\AC}{\mathsf {AC0}[\oplus]}
\newcommand{\Hom}{\mathsf {Hom}}
\newcommand{\Inj}{\mathsf {Inj}}
\newcommand{\K}{\mathrm {K}}
\newcommand{\Cop}{\mathsf {Cop}}
\newcommand{\aut}{\mathsf {aut}}
\newcommand{\Aut}{\mathsf {Aut}}
\newcommand{\pmat}{\mathsf {PMatch}}
\newcommand{\Conn}{\mathsf {Conn}}
\newcommand{\Types}{\mathsf {Types}}
\newcommand{\type}{\mathsf {type}}
\newcommand{\freq}{\mathsf {freq}}
\newcommand{\supp}{\mathsf {supp}}
\newcommand{\Mod}{\mathsf {Mod}}
\newcommand{\Freq}{\mathsf {FFreq}}
\newcommand{\konn}{label-connected }
\newcommand{\Partitions}{\mathsf {Partitions}}
\newcommand{\modsum}{\sum}
\newcommand{\handout}[5]{
   \noindent
   \begin{center}
   \framebox{
      \vbox{
    \hbox to 5.78in { {\bf 6.896 Sublinear Time Algorithms}
     	 \hfill #2 }
       \vspace{4mm}
       \hbox to 5.78in { {\Large \hfill #5  \hfill} }
       \vspace{2mm}
       \hbox to 5.78in { {\it #3 \hfill #4} }
      }
   }
   \end{center}
   \vspace*{4mm}
}
\newtheorem{theorem}{Theorem}[section]
\newtheorem{corollary}[theorem]{Corollary}
\newtheorem{lemma}[theorem]{Lemma}
\newtheorem{proposition}[theorem]{Proposition}
\newtheorem{definition}[theorem]{Definition}
\newtheorem{claim}[theorem]{Claim}
\newcommand{\qed}{\hfill \ensuremath{\Box}}
\newenvironment{proof}{\noindent{\bf Proof}\hspace*{1em}}{\qed\bigskip}
\newenvironment{proof-sketch}{\noindent{\bf Sketch of Proof}\hspace*{1em}}{\qed\bigskip}
\newenvironment{proof-idea}{\noindent{\bf Proof Idea}\hspace*{1em}}{\qed\bigskip}
\newenvironment{proof-of-lemma}[1]{\noindent{\bf Proof of Lemma #1}\hspace*{1em}}{\qed\bigskip}
\newenvironment{proof-attempt}{\noindent{\bf Proof Attempt}\hspace*{1em}}{\qed\bigskip}
\newenvironment{proofof}[1]{\noindent{\bf Proof
of #1:\hspace*{1em}}}{\qed\bigskip}
\newenvironment{remark}{\noindent{\bf Remark}\hspace*{1em}}{\bigskip}
\def\fnum@figure{{\bf Figure \thefigure}}
\def\fnum@table{{\bf Table \thetable}}
\long\def\@mycaption#1[#2]#3{\addcontentsline{\csname
  ext@#1\endcsname}{#1}{\protect\numberline{\csname 
  the#1\endcsname}{\ignorespaces #2}}\par
  \begingroup
    \@parboxrestore
    \small
    \@makecaption{\csname fnum@#1\endcsname}{\ignorespaces #3}\par
  \endgroup}
\def\mycaption{\refstepcounter\@captype \@dblarg{\@mycaption\@captype}}
\newcommand{\mathify}[1]{\ifmmode{#1}\else\mbox{$#1$}\fi}
\newcommand{\bigO}O
\newcommand{\Z}{{\mathbb Z}}
\renewcommand{\vec}[1]{{\mathbf #1}}
\begin{document}
\begin{titlepage}
\title{Random Graphs and the Parity Quantifier}
\author{Phokion G.\ Kolaitis\thanks{UC Santa Cruz and IBM Almaden Research Center. {\tt kolaitis@cs.ucsc.edu}} \and
Swastik Kopparty\thanks{Massachusetts Institute of Technology. {\tt swastik@mit.edu}}}

\maketitle
\thispagestyle{empty}
\begin{abstract}
The classical zero-one law for first-order logic on random graphs says that for every first-order property $\varphi$ in the theory of graphs and every $p \in (0,1)$, the probability that the random graph $G(n, p)$ satisfies
$\varphi$ approaches either $0$ or $1$ as $n$ approaches infinity.
It is well known that this law fails to hold for
 any formalism that can express the parity quantifier:
 for certain properties, the probability that $G(n,p)$ satisfies the property need not converge, and for
others the limit may be strictly between $0$ and $1$.

In this work, we capture the limiting behavior
of properties definable in first order logic augmented with the parity quantifier, $\FOP$,
over $G(n,p)$, thus eluding the above hurdles. Specifically,
we establish the following ``modular convergence law":
\begin{quote}
For every $\FOP$ sentence $\varphi$, there are two explicitly computable rational numbers $a_0$, $a_1$, such that for $i \in \{0,1\}$, as $n$ approaches infinity, the probability that the random graph $G(2n+i, p)$ satisfies $\varphi$ approaches $a_i$.
\end{quote}
Our results also extend appropriately to $\FO$ equipped with $\Mod_q$ quantifiers for prime $q$.

In the process of deriving the above theorem, we explore a new question
that may be of interest in its own right. Specifically, we study the
joint distribution of the subgraph statistics modulo $2$ of $G(n,p)$:
namely, the number of copies, mod $2$, of a fixed number of graphs
$F_1, \ldots, F_\ell$ of bounded size in $G(n,p)$. We first
show that every $\FOP$ property $\varphi$ is almost surely determined by
subgraph statistics modulo $2$ of the above type.
Next, we show that the limiting joint distribution
of the subgraph statistics modulo $2$ depends
only on $n \mod 2$, and we determine
this limiting distribution completely. Interestingly,
both these steps are based on a common technique using
multivariate polynomials over finite fields and,
in particular, on a new generalization  of the Gowers norm.

The first step above is analogous to the Razborov-Smolensky method
for lower bounds for $\mathsf{AC0}$ with parity gates, yet stronger
in certain ways. For instance, it allows us to obtain examples of
simple graph properties that are exponentially uncorrelated with
every $\FOP$ sentence, which is something that is not known for $\AC$.
\end{abstract}
\end{titlepage}
%\newpage
\thispagestyle{empty}
\setcounter{page}{0}
\tableofcontents
\newpage
\section{Introduction}

For quite a long time, combinatorialists have studied the asymptotic probabilities
of properties on classes of finite structures, such as graphs and partial orders.
Assume that $\cal C$ is a class of finite structures and let $\Pr_n$, $n\geq 1$,
be a sequence of probability measures on all structures in $\cal C$ with  $n$ elements
in their domain. If $Q$ is a property of some structures in $\cal C$ (that is, a decision problem on $\cal C$),
then the \emph{asymptotic probability $\Pr(Q)$ of $Q$ on $\cal C$} is defined as $\Pr(Q)= \lim_{n\rightarrow \infty}\Pr_n(Q)$,
provided this limit exists.
In this paper, we will be focusing on the case when $\cal C$ is the class $\cal G$ of all finite graphs,
and $\Pr_n = G(n,p)$ for constant $p$; this is the probability distribution on $n$-vertex
undirected graphs where between each pair of nodes an edge appears with probability $p$, independently
of other pairs of nodes.
 For example, for this case,
the asymptotic probabilities $\Pr(\mbox{\sc{Connectivity}}) = 1$ and
 $\Pr(\mbox{{\sc Hamiltonicity}}) = 1$; in contrast,
if $\Pr_n=G(n, p(n))$ with $p(n)= 1/n$, then $\Pr(\mbox{\sc{Connectivity}}) = 0$ and $\Pr(\mbox{{\sc Hamiltonicity}}) = 0$.

Instead of studying separately one property at a time, it is natural to consider formalisms for
specifying properties of finite structures and to investigate the connection between the expressibility
of a property in a certain formalism and its asymptotic probability. The first and most celebrated such connection was established by
Glebskii et al.\ \cite{GKLT69} and, independently, by Fagin \cite{Fa76}, who showed that
 a \emph{0-1 law} holds for first-order logic\footnote{Recall that the formulas of first-order logic on graphs
 are obtained from atomic formulas $E(x,y)$ (interpreted as the adjacency relation) and  equality formulas $x=y$ using Boolean combinations,  existential
 quantification, and universal quantification;
  the quantifiers are interpreted as ranging over the set of vertices of the graph (and not over sets
  of vertices or sets of edges, etc.).}
 $\FO$ on the random graph $G(n,p)$ with $p$ a constant in $(0,1)$; this means that
 if $Q$ is a property of graphs expressible in  $\FO$ and $\Pr_n=G(n,p)$ with $p$ a constant in $(0,1)$, then $\Pr(Q)$ exists and is
either $0$ or $1$. This result became the catalyst for a series of investigations in several different directions.
Specifically, one line of investigation \cite{SS87,SS88} investigated  the existence of 0-1 laws for first-order logic $\FO$ on the random graph $G(n,p(n))$ with
$p(n) = n^{-\alpha}$, $0<\alpha<1$. Since first-order logic on finite graphs has limited
 expressive power (for example, $\FO$ cannot express  {\sc Connectivity} and
 {\sc 2-Colorability}), a different line of investigation pursued 0-1 laws for extensions
of first-order logic on the random graph $G(n,p)$ with $p$ a constant in $(0,1)$. In this vein,
it was shown in \cite{BGK85,KV87} that the 0-1 law holds for extensions of $\FO$ with fixed-point operators, such
as least fixed-point logic $\LFP$, which can express {\sc Connectivity} and {\sc 2-Colorability}.
As regards to higher-order logics,  it is clear that the 0-1 law fails even for existential second-order logic $\ESO$,
 since it is well known that $\ESO = \NP$ on finite graphs \cite{Fa74}. In fact, even the \emph{convergence law} fails
 for $\ESO$, that is, there are $\ESO$-expressible properties $Q$ of finite graphs such that $\Pr(Q)$ does \emph{not}
 exist.
 For this reason, a separate
line of investigation pursued 0-1 laws for syntactically-defined subclasses of $\NP$. Eventually, this
investigation produced a complete classification of the quantifier prefixes of $\ESO$ for which
the 0-1 law holds \cite{KV87,KV90c,PS89}, and provided a unifying account for the asymptotic
probabilities of such NP-complete problems as {\sc $k$-Colorability}, $k\geq 3$.

Let $L$ be a logic for which the 0-1 law (or even just the convergence law) holds on the random graph $G(n,p)$ with $p$ a constant
in $(0,1)$. An immediate consequence of this is that $L$ cannot express any \emph{counting} properties,
such as {\sc Even Cardinality} (``there is an even number of nodes"), since $\Pr_{2n}(\mbox{{\sc Even Cardinality}})=1$ and $\Pr_{2n+1}(\mbox{{\sc Even Cardinality}}) = 0$. In this paper, we turn the tables around
and systematically investigate the asymptotic probabilities of properties expressible in extensions of $\FO$ with  \emph{counting
quantifiers} $\Mod^i_q$, where $q$ is a prime number. The most prominent such extension is
$\FOP$, which is the extension of $\FO$ with the \emph{parity quantifier} $\Mod^1_2$. The syntax of
$\FOP$ augments the syntax of $\FO$ with the following formation rule: if $\varphi(y)$ is
a $\FOP$-formula, then $\oplus y \varphi(y)$ is also a $\FOP$-formula; this formula is true if the number
of $y$'s that satisfy $\varphi(y)$ is odd (analogously, $\Mod_q^i y \varphi(y)$ is true if the number of $y$'s that satisfy $\varphi(y)$ is congruent to $i \mod q$). A typical property on graphs expressible in $\FOP$ (but not in $\FO$) is
$\mathcal P:= \{G : \mbox{ every vertex of $G$ has odd degree}\}$,  since a graph is in $\mathcal P$ if and only if it satisfies the $\FOP$-sentence $\forall x \oplus y E(x,y)$.
%{\sc Eulerian},  since a graph is Eulerian if and only if it satisfies the $\FOP$-sentence $\forall x\neg \oplus y E(x,y)$.

Our main result (see Theorem~\ref{thm:Main}) is a \emph{modular convergence law} for $\FOP$ on $G(n,p)$ with $p$ a constant in $(0,1)$.
This law asserts that if $\varphi$ is a $\FOP$-sentence, then there are two explicitly computable rational numbers $a_0$, $a_1$, such that, as $n\rightarrow \infty$, the probability that the random graph $G(2n+i, p)$ satisfies $\varphi$ approaches $a_i$, for $i=0,1$. Moreover, $a_0$ and $a_1$ are computable and are of the form $r/2^{s}$, where $r$ and $s$
are non-negative integers.
%From a complexity standpoint,  this is an optimal result because
%an easy reduction from {\sc QBF} shows that it is $\PSPACE$-hard to tell whether the asymptotic probability of
% a $\FO$-sentence is $0$ or $1$.
We also establish that an analogous modular convergence law
holds for every extension $\FOM$ of $\FO$ with the counting quantifiers $\{\Mod^i_q : i \in [q-1]\}$, where
$q$ is a prime. It should be noted that results in \cite{HKL96} imply that the modular convergence law for $\FOP$ does \emph{not}
generalize to extensions of $\FOP$ with fixed-point operators. This is in sharp contrast to the aforementioned
0-1 law for $\FO$ which carries over to extensions of $\FO$ with fixed-point operators.

\subsection{Methods}

Earlier 0-1 laws have been established by a combination of standard methods and techniques
from mathematical logic and random graph theory. In particular, on the side of mathematical
logic, the tools used include the compactness theorem, Ehrenfeucht-Fra\"{i}ss\'e games, and
quantifier elimination. Here, we establish the modular convergence law by combining
quantifier elimination with, interestingly, algebraic methods related to
multivariate polynomials over finite fields. In what follows in this section, we
present an overview of the methods and techniques that we will use.

\subsubsection{The distribution of subgraph frequencies mod $q$, polynomials and Gowers norms}
Let us briefly indicate the relevance of polynomials to the study of $\FOP$
on random graphs. A natural example of a statement in $\FOP$ is a formula $\varphi$
such that $G$ satisfies $\varphi$ if and only if the number of copies of $H$ in
$G$ is odd, for some graph $H$ (where by copy we mean an induced subgraph, for now).
Thus understanding the asymptotic probability of $\varphi$ on $G(n, p)$ amounts
to understanding the distribution of the number of copies (mod $2$) of $H$ in $G(n,p)$.

In this spirit, we ask: what is the probability
that in $G(n, 1/2)$ there is an odd number of
triangles (where we count {\em unordered} triplets of vertices $\{a, b, c\}$ such that $a, b, c$ are all pairwise adjacent\footnote{Counting the number of {\em unordered} triples is not expressible in $\FOP$, we ask
this question only for expository purposes (nevertheless, we do give an answer to this question
in Section~\ref{sec:equi}).})?

We reformulate this question in terms of the following
``triangle polynomial", that takes the adjacency matrix of a graph as input
and returns the parity of the number of triangles in the graph;
$P_{\triangle} : \{0,1\}^{n \choose 2} \rightarrow \{0,1\}$, where
$$P_{\triangle}((x_{e})_{e \in {n \choose 2}}) = \sum_{\{e_1, e_2, e_3\}\mbox{ forming a }\triangle} x_{e_1} x_{e_2} x_{e_3},$$
where the arithmetic is$\mod 2$. Note that for the random graph $G(n,1/2)$, each entry of the adjacency matrix is
chosen independently and uniformly from $\{0,1\}$. Thus the probability that a random graph $G \in G(n,1/2)$ has an odd number of triangles
is precisely equal to
$\Pr_{x \in \Z_2^n}[ P_{\triangle}(x) = 1 ]$. Thus we have reduced our problem to studying
the distribution of the evaluation of a certain polynomial at a random point, a topic of
much study in pseudorandomness and algebraic coding theory, and we may now appeal to tools from
these areas. 
%While the kinds of polynomials (such as $P_\triangle$) that arise from our
%questions have not been considered before, there is enough structure in them that
%allows us to relate them to ``Generalized Inner-Product" polynomials, thereby
%answering our questions.

In Section~\ref{sec:equi}, via the above approach, we show that the probability
that $G(n,1/2)$ has an odd number of triangles equals $1/2 \pm 2^{-\Omega(n)}$.
Similarly, for any connected graph $F \neq \K_1$ (the graph consisting of one
vertex), the probability that $G(n,1/2)$ has an odd number of copies\footnote{
with a certain precise definition of ``copy".} of $F$ is
also $1/2 \pm 2^{-\Omega(n)}$ (when $F = \K_1$, there is no randomness in
the number of copies of $F$ in $G(n,1/2)$!). In fact,
we show that for any collection of distinct connected graphs $F_1, \ldots, F_\ell$
($\neq \K_1$), the joint distribution of the number of copies mod $2$ of $F_1, \ldots, F_\ell$
in $G(n,1/2)$ is $2^{-\Omega(n)}$-close to the uniform distribution on $\Z_2^\ell$, i.e.,
the events that there are an odd number of $F_i$ are essentially independent of one another.

Generalizing the above to $G(n,p)$ and counting mod $q$ for arbitrary
$p \in (0,1)$ and arbitrary integers $q$ motivates the study of
new kinds of questions about polynomials, that we believe are interesting
in their own right. For $G(n, p)$ with arbitrary $p$,
we need to study the distribution of $P(x)$, for certain polynomials $P$,
when $x \in \Z_2^m$ is distributed according to the {\em $p$-biased measure}.
Even more interestingly, for the study of $\FOM$,
where we are interested in the distribution of the number of triangles$\mod q$,
one needs to understand the distribution of $P(x)$ ($P$ is
now a polynomial over $\Z_q$) where $x$ is chosen uniformly from $\{0,1\}^{m}
\subseteq \Z_q^{m}$ (as opposed to $x$ being chosen uniformly from
all of $\Z_q^m$, which is traditionally studied). In 
Section~\ref{asec:poly}, we develop all the
relevant polynomial machinery
in order to answer these questions. This involves generalizing some
classical results of Babai, Nisan and Szegedy~\cite{BNS}
on correlations of polynomials. The key technical innovation here is
our definition of a $\mu$-Gowers norm (where $\mu$ is a measure on $\Z_q^m$)
that measures the correlation, under $\mu$, of a given function
with low-degree polynomials (letting $\mu$ be the uniform measure, we recover
the standard Gowers norm). After generalizing several results about
the standard Gowers norm to the $\mu$-Gowers norm case, we can then use a
technique of Viola and Wigderson~\cite{VW} to establish
the generalization of~\cite{BNS} that we need.

\subsubsection{Quantifier elimination}

Although we studied the distribution of subgraph frequencies
mod $q$ as an attempt to determine the limiting behavior of only
a special family of $\FOM$ properties, it turns out that
this case, along with the techniques developed to handle it,
play a central role in the proof of the full modular convergence law.
In fact, we reduce the modular convergence law for general $\FOM$ properties
to the above case.
We show that for any $\FOM$ sentence $\varphi$, with
high probability over $G \in G(n,p)$, the truth of $\varphi$
on $G$ is determined by the number of copies in $G$, mod $q$, of
each small subgraph. Then by the results described earlier
on the equidistribution of these numbers (except for the number of $\K_1$,
which depends only on $n \mod q$), the full modular
convergence law for $\FOM$ follows.

In Section~\ref{sec:quant}, we establish such a reduction using the method of
elimination of quantifiers. To execute this, we need to analyze $\FOM$
formulas which may contain free variables (i.e., not every variable
used is quantified).
Specifically, we show that for every
$\FOM$ formula $\varphi(\alpha_1, \ldots, \alpha_k)$,
with high probability over $G \in G(n,p)$, it holds that for
all vertices $w_1, \ldots, w_k$ of $G$, the truth of $\varphi(w_1, \ldots, w_k)$
is entirely determined by the following data: (a) which of the
$w_i, w_j$ pairs are adjacent, (b) which of the $w_i$, $w_j$ pairs
are equal to one another, and (c) the number of copies ``rooted'' at $w_1, \ldots, w_k$,
mod $q$, of each small {\em labelled graph}.
This statement is a generalization of what we needed to prove, but lends itself to
inductive proof ({\em this} is quantifier elimination).
This leads us to studying the distribution
(via the polynomial approach described earlier) of the number of
copies of {labelled graphs} in $G$; questions
of the form, given two specified vertices $v, w$ (the ``roots"),
what is the probability that there are an odd number of paths of length
$4$ in $G \in G(n,p)$ from $v$ to $w$?
After developing the necessary results on the distribution of
labelled subgraph frequencies, combined with some elementary combinatorics,
we can eliminate quantifiers and thus complete the proof of the modular
convergence law.

%We establish the reduction just mentioned using the technique of
%elimination of quantifiers. To execute this, we need to analyze $\FOM$
%formulas which may contain free variables (i.e., not every variable
%used is quantified). This leads us to studying the distribution of
%the number of copies of {\em labelled graphs} in $G$; questions
%of the form, given two specified vertices $v, w$, what is
%the probability that there are an odd number of paths of length
%$4$ in $G \in G(n,p)$ from $v$ to $w$? Using the polynomial approach,
%we determine the distribution of all such labelled subgraph frequencies too.
%With this as input, combined with some elementary combinatorics, we
%can eliminate quantifiers, and conclude the proof of the modular
%convergence law.

\subsection{Comparison with $\AC$}

Every $\FOP$ property naturally defines a family of boolean functions
$f_n : \{0,1\}^{n \choose 2} \rightarrow \{0,1\}$, such that a graph
$G$ satisfies $\varphi$ if and only if $f_n(A_G) = 1$, where $A_G$ is the
adjacency matrix of $G$. This family of functions is easily seen to
be contained in 
%phk
$\mathsf {AC0}[\oplus]$, which is $\mathsf {AC0}$ with parity gates 
(each $\forall$ becomes an $\mathsf{AND}$ gate, $\exists$ becomes a $\mathsf{OR}$
gate and $\oplus$ becomes a parity gate). This may be summarized by saying
that $\FOP$ is a highly uniform version of $\AC$.

Currently, all our understanding of the power of $\AC$ comes from the
Razborov-Smolensky \cite{razbo, smolen} approach to proving circuit lower
bounds on $\AC$. At the heart of this approach is the result that
for every $\AC$ function $f$, there is a low-degree polynomial $P$ such
that for $1 - \epsilon(n)$ fraction of inputs, the evaluations of $f$ and $P$ are equal.
Note that this result automatically holds for $\FOP$
(since $\FOP \subseteq \AC$).

We show that for the special case when $f: \{0,1\}^{n \choose 2} \rightarrow \{0,1\}$
comes from an $\FOP$ property $\varphi$, a significantly improved approximation may be obtained:
{\it(i)} We show that the degree of $P$ may be chosen to
be a constant depending only on $\varphi$, whereas the Razborov-Smolensky
approximation required $P$ to be of $\mathrm{polylog}(n)$ degree,
{\it(ii)} The error parameter $\epsilon(n)$ may be chosen
to be exponentially small in $n$, whereas the Razborov-Smolensky method
only yields $\epsilon(n) = 2^{-\log^{O(1)}n}$.
 {\it (iii):} Finally, the polynomial $P$ can be chosen to be
symmetric under the action of $S_n$ on the ${n \choose 2}$ coordinates,
while in general, the polynomial produced by the Razborov-Smolensky
approach need not be symmetric (due to the randomness involved in the
choices).

These strengthened approximation results allow us, using known results
about pseudorandomness against low-degree polynomials, to
show that {\it (i)} there exist explicit pseudorandom generators that
fool $\FOP$ sentences, and {\it (ii)} there exist explicit functions
$f$ such that for any $\FOP$ formula $\varphi$, the
probability over $G \in G(n,p)$ that $f(G) = \varphi(G)$ is at most
$\frac{1}{2} + 2^{-\Omega(n)}$.
The first result follows from the pseudorandom generators against
low-degree polynomials due to Bogdanov-Viola~\cite{BogVio}, Lovett~\cite{Lov}
and Viola~\cite{Vio}. The second result follows from the result of
Babai, Nisan and Szegedy~\cite{BNS}, and our generalization of it,
giving explicit functions that are uncorrelated with low degree polynomials.

Obtaining similar results for $\AC$ is one of the primary goals of
modern day ``low-level" complexity theory.

\paragraph{Organization of this paper:} In the next section, we
formally state our main results and some of its corollaries.
In Section~\ref{sec:equi}, we determine the distribution of subgraph
frequencies mod $q$. In Section~\ref{asec:poly}, we introduce the
$\mu$-Gowers Norm use it to prove some technical results on the bias of
polynomials needed for the previous section.
In Section~\ref{sec:quantstatement}, we state the theorem which
implements the quantifier elimination and describe the plan for its proof.
This plan is then executed in Sections~\ref{sec:quant},
\ref{sec:extendcount} and \ref{sec:indep}. We conclude with some
open questions.

\section{The Modular Convergence Law}

We now state our main theorem.

\begin{theorem}\label{thm:Main}
Let $q$ be a prime.
Then for every $\FO[\Mod_q]$-sentence $\varphi$, there exist
rationals $a_0, \ldots, a_{q-1}$ such that for every
$p \in (0,1)$ and every $i \in \{0,1, \ldots, q-1\}$,
$$ \lim_{\substack{n \rightarrow \infty\\ n\equiv i\  \mathrm{mod}\  q}} \Pr_{G \in G(n, p)}[ G \mbox{ satisfies } \varphi] = a_i.$$
\end{theorem}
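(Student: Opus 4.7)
The plan is to reduce the modular convergence law to the two main technical ingredients developed earlier in the excerpt: the quantifier-elimination theorem of Section~\ref{sec:quantstatement} and the equidistribution theorem for subgraph counts modulo $q$ from Section~\ref{sec:equi}.

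First I would apply quantifier elimination to the given sentence $\varphi$. This supplies a finite list of connected graphs $F_1, \ldots, F_\ell$ of size bounded by a constant depending only on $\varphi$, and a Boolean predicate $\Phi : \Z_q^\ell \to \{0,1\}$, such that if $N_j(G) \in \Z_q$ denotes the count (mod $q$) of copies of $F_j$ in $G$ (in the precise sense fixed in Section~\ref{sec:equi}), then
$$\Pr_{G \sim G(n,p)}\bigl[\, (G \models \varphi) \Leftrightarrow \Phi(N_1(G), \ldots, N_\ell(G)) = 1 \,\bigr] \;=\; 1 - o_n(1).$$
Without loss of generality one of the $F_j$, say $F_1$, is $\K_1$, for which $N_1(G) \equiv n \pmod q$ is deterministic; the remaining $F_j$ are connected graphs on at least two vertices.

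Second I would invoke the equidistribution result of Section~\ref{sec:equi}: for any distinct connected graphs $F_2, \ldots, F_\ell$ none equal to $\K_1$ and any constant $p \in (0,1)$, the joint distribution of $(N_2(G), \ldots, N_\ell(G))$ over $G \sim G(n,p)$ is $q^{-\Omega(n)}$-close in statistical distance to the uniform distribution on $\Z_q^{\ell-1}$. Fixing a residue $i \in \{0, \ldots, q-1\}$ and letting $n \to \infty$ along $n \equiv i \pmod q$, the $\K_1$-coordinate of the input to $\Phi$ is pinned to $i$ while the other $\ell - 1$ coordinates become asymptotically uniform and independent. Both error terms (the $o_n(1)$ from quantifier elimination and the $q^{-\Omega(n)}$ from equidistribution) vanish in the limit, so
$$\lim_{\substack{n \to \infty \\ n \equiv i \pmod q}} \Pr_{G \sim G(n,p)}\bigl[G \models \varphi\bigr] \;=\; \frac{\bigl|\{z \in \Z_q^{\ell-1} : \Phi(i,z) = 1\}\bigr|}{q^{\ell-1}} \;=:\; a_i,$$
a rational of the form $r/q^{\ell-1}$, as claimed.

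The main obstacle is the quantifier-elimination theorem itself. That statement is really about $\FOM$ formulas $\varphi(\alpha_1, \ldots, \alpha_k)$ with free variables: one must show, inductively in the quantifier depth, that with high probability over $G$ the truth of $\varphi(w_1, \ldots, w_k)$ is determined, uniformly in the tuple $w_1, \ldots, w_k$, by the equality/adjacency pattern among the $w_i$'s together with counts mod $q$ of \emph{rooted labelled} subgraphs anchored at $w_1, \ldots, w_k$. The inductive step for a $\Mod_q$-quantifier requires expressing a count of the form $|\{y : \psi(y, \bar w)\}| \bmod q$ as a function of such rooted labelled subgraph counts with high probability over $G$, which is exactly where the multivariate-polynomial and $\mu$-Gowers-norm machinery of Section~\ref{asec:poly} enters: one writes the indicator of $\psi$ as a low-degree polynomial over $\Z_q$, sums over $y$, and shows that the resulting polynomial expression concentrates on a function of the labelled subgraph counts. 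Once the inductive framework with the right notion of labelled count is set up so that it is closed under adding one $\Mod_q$-quantifier, the combinatorial reduction above is comparatively routine.
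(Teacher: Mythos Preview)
Your approach is correct and matches the paper's: Theorem~\ref{thm:Main} is proved there in one line by combining Theorem~\ref{thm:Main1} (quantifier elimination to subgraph frequencies) with Theorem~\ref{thm:Main2} (equidistribution of those frequencies), and your first two paragraphs spell out precisely this combination. One correction to your closing discussion of obstacles: the $\Mod_q$-quantifier step (Theorem~\ref{thm:extendcount}) is handled by elementary combinatorics over $\Z_q$ and does \emph{not} invoke the Gowers-norm machinery; it is the $\exists$-quantifier step---which you omit, but which the paper calls ``the most technically involved case''---that requires the labelled equidistribution result (Theorem~\ref{thm:superindep}) and hence the polynomial/$\mu$-Gowers-norm tools of Section~\ref{asec:poly}.
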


\begin{remark}
The proof of Theorem~\ref{thm:Main} also yields:
\begin{itemize}
\item Given the formula $\varphi$, the numbers $a_0, \ldots, a_{q-1}$ can be computed.
\item Each $a_i$ is of the form $r/q^s$, where $r, s$ are nonnegative integers.
\item For every sequence of numbers $b_0, \ldots, b_{q-1} \in [0,1]$, each of the form $r/q^s$, there is a
$\FO[\Mod_q]$-sentence $\varphi$ such that for each $i$, the number $a_i$ given by the theorem
equals $b_i$.
\end{itemize}
\end{remark}

Before we describe the main steps in the proof of Theorem~\ref{thm:Main}, we make
a few definitions.

For graphs $F = (V_F, E_F)$ and $G = (V_G, E_G)$, an {\em (injective) homomorphism} from $F$ to $G$ is an (injective) map $\chi : V_F \rightarrow V_G$ that maps edges to edges, i.e., for any $(u,v) \in E_F$, we have $(\chi(u), \chi(v)) \in E_G$. Note that we do not require that $\chi$ maps non-edges to non-edges.
We denote by $[F](G)$ the number of injective homomorphisms from $F$ to $G$, and we denote by $[F]_q(G)$ this number$\mod q$. We let $\aut(F) := [F](F)$ be the number of automorphisms of $F$.

%\begin{definition}[Homomorphisms and Copies]
%A {\em homomorphism} from a graph $F$ to a graph $G$ is a map $\chi : V_F \rightarrow V_G$
%such that for any $(v, w) \in E_F$, $(\chi(v), \chi(w)) \in E_G$.
%A homomorphism is called {\em injective} if for any distinct $v,w \in V_F$,
%$\chi(v) \neq \chi(w)$.
% A {\em copy} of a graph $F$ in a graph $G$ is a set $E \subseteq E_G$
%such that there exists an injective homomorphism $\chi$ from $F$ to $G$ such that
%$E = \chi(E_F) := \{(\chi(v), \chi(w)) \mid (v, w) \in E_F\}$.  An {\em automorphism} of $F$ is an injective homomorphism from $F$ to itself.
%\end{definition}

%\begin{definition}[$\Hom, \Inj, \Cop, \Aut$]
%Let $F$ and $G$ be graphs. We define $\Hom(F, G)$
%to be the set of homomorphisms from $F$ to $G$.
%We define $\Inj(F, G)$ to be the set of injective homomorphisms
%from $F$ to $G$. We define $\Cop(F,G)$ to be the set of
%copies of $F$ in $G$. We define $\Aut(F)$ to be the set of automorphisms
%of $F$.
%We let $[F](G)$ (respectively $\langle F \rangle(G)$, $\aut(F)$)
%be the cardinality of $\Inj(F,G)$ (respectively $\Cop(F,G)$, $\Aut(F)$).
%
%$[F]_q(G)$ and $\langle F \rangle_q(G)$ denote $[F](G) \mod q$ and $\langle F \rangle(G) \mod q$ respectively.
%\end{definition}

The following lemma (which follows from Lemma~\ref{lem:aut} in Section~\ref{sec:quant}), shows that for some graphs $F$, as $G$ varies, the number $[F](G)$ cannot be arbitrary.
\begin{lemma}\label{lem:aut-ident}
Let $F$ be a connected graph and $G$ be any graph.
Then $\aut(F)  \mid [F](G)$.
\end{lemma}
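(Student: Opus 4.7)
The plan is to exhibit a free action of the automorphism group $\Aut(F)$ on the set $\Inj(F,G)$ of injective homomorphisms from $F$ to $G$, from which $|\Aut(F)| \bigm| |\Inj(F,G)| = [F](G)$ is immediate via orbit-stabilizer. In fact I expect the connectedness hypothesis to be unnecessary for this particular statement (though it may well be needed for the more general Lemma~\ref{lem:aut} that implies it); I would simply flag this and give the cleaner argument.

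Concretely, first I would define the action $\Aut(F) \times \Inj(F,G) \to \Inj(F,G)$ by $\sigma \cdot \chi := \chi \circ \sigma^{-1}$. The only thing to check is well-definedness: if $\chi$ is an injective homomorphism and $\sigma \in \Aut(F)$, then $\chi \circ \sigma^{-1}$ is a composition of an injective homomorphism with a bijective homomorphism, hence is itself an injective homomorphism from $F$ to $G$. The group axioms are routine.

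Next I would verify freeness. Suppose $\sigma \cdot \chi = \chi$, that is $\chi \circ \sigma^{-1} = \chi$. For every $v \in V_F$ this gives $\chi(\sigma^{-1}(v)) = \chi(v)$, and since $\chi$ is injective as a map $V_F \to V_G$ we conclude $\sigma^{-1}(v) = v$ for all $v$, i.e., $\sigma$ is the identity of $\Aut(F)$. So every stabilizer is trivial.

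By orbit-stabilizer, every orbit of this action on $\Inj(F,G)$ has cardinality exactly $|\Aut(F)| = \aut(F)$. Partitioning $\Inj(F,G)$ into orbits then yields $\aut(F) \mid [F](G)$, as required. The only potential obstacle I foresee is cosmetic: if the paper's $[F](G)$ counts something slightly different (e.g., homomorphic images rather than maps, or is normalized in some way), the action would need to be set up on the correct underlying set; assuming the definition as written in the excerpt (number of injective maps $V_F \to V_G$ sending edges to edges), the above goes through verbatim.
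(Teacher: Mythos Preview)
Your proof is correct and is essentially the paper's approach: the paper derives this from Lemma~\ref{lem:aut} (specialized to $I=\emptyset$), whose bijection $\Aut(F)\times\Cop(F,G)\to\Inj(F,G)$, $(\sigma,E)\mapsto\chi_E\circ\sigma$, is exactly your free $\Aut(F)$-action together with an explicit labeling of the orbits by copies. Your observation that connectedness is unnecessary for the bare divisibility statement is also correct---the paper uses connectedness only to identify each orbit with an edge-set $E\in\Cop(F,G)$, which is needed for the stronger identity $[F](G)=\aut(F)\cdot\langle F\rangle(G)$ of Lemma~\ref{lem:autunlab} but not for $\aut(F)\mid[F](G)$.
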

For the rest of this section, let $q$ be a fixed prime. 
Let $\Conn^a$ be the set of connected graphs on at most $a$ vertices. 
For any graph $G$, 
let the {\em subgraph frequency vector} $\freq_G^a \in \Z_q^{\Conn^a}$ be the vector
such that its value in coordinate $F$ ($F \in \Conn^a$) equals $[F]_q(G)$,
the number of injective homomorphisms from $F$ to $G$ mod $q$. Let $\Freq(a)$, the set of {\em feasible frequency vectors},
be the subset of $\Z_q^{\Conn^a}$ consisting of all $f$ such that
for all $F \in \Conn^a$, $ f_{F} \in \aut(F) \cdot \Z_q := \{ \aut(F) \cdot x \mid x \in \Z_q\}$. 
%$\GCD(q, \aut(F)) \mid f_{F}$.
By Lemma~\ref{lem:aut-ident}, for every $G$ and $a$, $\freq_G^a \in \Freq(a)$, i.e., the
subgraph frequency vector is always feasible.

%Let $\Conn^a$ be the set of connected graphs on at most $a$ vertices. For any graph $G$,
%let the {\em subgraph frequency vector} $\freq_G^a \in \Z_q^{\Conn^a}$ be the vector
%such that its value in coordinate $F$ ($F \in \Conn^a$) equals $[F]_q(G)$,
%the number of injective homomorphisms from $F$ to $G$ mod $q$. Let $\Freq(a)$, the set of {\em feasible frequency vectors}, be the subset of $\Z_q^{\Conn^a}$ consisting of all $f$ such that $f_{F} = 0$ whenever $q \mid \aut(F)$. By lemma~\ref{lem:aut-ident}, for any $G$ and $a$, $\freq_G^a \in \Freq(a)$, i.e., the
%subgraph frequency vector is always feasible.

We can now state the two main technical results
that underlie Theorem~\ref{thm:Main}.

The first states that on almost all graphs $G$, every $\FO[\Mod_q]$ formula can be expressed in terms of the subgraph frequencies, $[F]_q(G)$, over all small connected graphs $F$.

\begin{theorem}{\bf (Subgraph frequencies$\mod q$ determine $\FO[\Mod_q]$ formulae)}
\label{thm:Main1}
For every $\FO[\Mod_q]$-sentence $\varphi$ of quantifier depth $t$, there exists an integer $c = c(t,q)$ and a function
$\psi : \Z_q^{\Conn^c} \rightarrow \{0, 1\}$ such that for all $p \in (0,1)$,
$$\Pr_{G \in G(n,p)}\left[ (G\mbox{ satisfies } \varphi) \Leftrightarrow (\psi(\freq^c_G) = 1) \right] \geq 1 - \exp(-n).$$
\end{theorem}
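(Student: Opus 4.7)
The approach is quantifier elimination via induction on formula structure, which requires strengthening the claim to accommodate $\FOM$-formulas $\varphi(\alpha_1,\ldots,\alpha_k)$ with free variables. For a tuple $\vec{w}=(w_1,\ldots,w_k)$ of vertices of $G$, define its \emph{type} to consist of (a) the adjacency pattern among the $w_i$'s, (b) their equality pattern, and (c) for each labelled graph $F$ on at most $c$ vertices with $k$ designated roots, the count modulo $q$ of injective homomorphisms $F \to G$ sending the roots onto $(w_1,\ldots,w_k)$ in order. The generalized claim I would prove is that for every $\varphi$ of quantifier depth $t$, there exist $c=c(t,q,k)$ and a Boolean function $\psi$ on types such that, with probability $\geq 1-\exp(-n)$ over $G \in G(n,p)$, $\varphi(\vec w)$ holds on $G$ iff $\psi(\type(\vec w)) = 1$, simultaneously for all $\vec{w} \in V(G)^k$.

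The base cases (atomic formulas) and Boolean combinations are immediate, the latter by composing $\psi$'s and union-bounding exceptional events. The substantive case is the modular quantifier $\oplus y\,\varphi(\vec{\alpha},y)$: by the inductive hypothesis at parameter $c'=c(t-1,q,k+1)$, outside an exceptional event we have
\[
\oplus y\,\varphi(\vec{w},y) \;\equiv\; \sum_{\tau} \psi'(\tau)\,N_\tau(\vec{w}) \pmod q,
\]
where $\tau$ ranges over types of $(k+1)$-tuples extending $\vec{w}$ and $N_\tau(\vec{w})$ is the number of $y$'s such that $(\vec{w},y)$ realizes $\tau$. The existential step admits a similar decomposition: $\exists y\,\varphi(\vec{w},y)$ holds iff some $\tau$ with $\psi'(\tau)=1$ has $N_\tau(\vec w) \neq 0$, and an extension-axiom-style concentration argument shows that each nonzero $N_\tau(\vec{w})$ is in fact exponentially large with high probability, so realizability reduces to a combinatorial compatibility condition on $\type(\vec{w})$.

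The main obstacle is showing that $N_\tau(\vec{w}) \bmod q$ is itself a function of the rooted labelled subgraph counts of $\vec{w}$ at parameter $c = c'+O(1)$. I would establish this by M\"obius inversion on the lattice of $(k+1)$-tuple types: the quantity ``number of $y$'s such that $(\vec{w},y)$ induces exactly type $\tau$'' is recovered by inclusion--exclusion from injective-homomorphism counts $F'\to G$ with roots at $(\vec{w},y)$, summed over $y$, which are themselves rooted subgraph counts at $\vec{w}$ with one additional vertex. This algebraic closure step is what drives the growth of $c(t,q,k)$ in $t$. Each exceptional event at each induction level has probability $\exp(-\Omega(n))$ by the concentration of rooted subgraph frequencies proved earlier via the $\mu$-Gowers norm machinery; since we union-bound over only $n^k = \poly(n)$ tuples and finitely many types, the $1-\exp(-n)$ bound survives.

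Finally, to deduce Theorem~\ref{thm:Main1} itself, specialize the generalized claim to $k=0$. The type of the empty tuple collapses to the counts $[F]_q(G)$ for labelled graphs $F$ on at most $c$ vertices. By the multiplicativity of injective-homomorphism counts under connected decomposition, each such $[F]_q(G)$ is a polynomial (with integer coefficients) in the counts $[F_i]_q(G)$ over the connected components $F_i$ of $F$. Hence the resulting $\psi$ factors through $\freq^c_G \in \Z_q^{\Conn^c}$, as required.
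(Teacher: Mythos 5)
Your overall architecture coincides with the paper's: the strengthened statement you propose is exactly Theorem~\ref{thm:foptopoly} (types = adjacency/equality data plus rooted labelled subgraph frequencies mod $q$), and the $k=0$ specialization via decomposing counts of disconnected graphs into connected ones is the paper's Lemma~\ref{lem:kgraphtoconnected} (note that injective homomorphism counts are not literally multiplicative over components; one needs the gluing identity of Lemma~\ref{lem:productinj} and an induction). The genuine gap is in your $\Mod_q$ case. Your $N_\tau(\vec w)$ counts vertices $y$ whose type includes a prescribed value of the \emph{entire} rooted frequency vector mod $q$, and M\"obius inversion over the poset of $(k+1)$-types only recovers the adjacency/equality (``induced'') part of the type; it does not convert a constraint of the form $[F]_q(G,\mathbf w,y)=f'_F$ into a $\Z_q$-linear combination of homomorphism counts. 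What is actually needed is to write the $\Z_q$-delta function as a polynomial in the count, i.e.\ $1-\bigl([F]_q(G,\mathbf w,y)-f'_F\bigr)^{q-1}$ via Fermat's little theorem; this is precisely where primality of $q$ enters (the paper notes this is the \emph{only} place it is used, cf.\ Theorem~\ref{thm:extendcount}), and your sketch never invokes primality. After this interpolation step one must re-expand products of rooted counts as sums of counts of glued graphs (Lemma~\ref{lem:productinj}), sum over $y$ to pass from $(k{+}1)$-rooted to $k$-rooted counts, and re-express possibly label-disconnected graphs through label-connected ones (Lemma~\ref{lem:kgraphtoconnected}); it is this chain, not inclusion--exclusion on types, that forces the growth of $c(t,q,k)$. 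For composite $q$ the step genuinely fails, consistent with the paper's open problem about $\FO[\Mod_m]$.

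The $\exists$ case is also thinner than it can afford to be. ``Each nonzero $N_\tau(\vec w)$ is exponentially large'' does not give what you need, namely that every \emph{compatible} $\tau$ is realized; and compatibility itself must be defined carefully: the feasible extensions are constrained by divisibility ($\aut(F/\Pi)$ divides the counts) and by identifications forced when $y$ coincides with some $w_j$ (the paper's notion of ``extends'', Lemmas~\ref{lem:ffreqcrux}, \ref{lem:unique-extend}, \ref{lem:self-fulfill}, including a separate ``self-fulfilling'' case). Moreover, since the frequency value you must hit is itself a random function of $G$, marginal equidistribution or concentration of individual counts does not suffice: the paper pre-selects $s=\Theta(n)$ candidate vertices, exposes edges in stages, and proves a \emph{joint, conditional} equidistribution theorem for the frequency vectors at $(\mathbf w,u_1),\dots,(\mathbf w,u_s)$ given $\freq_G^a(\mathbf w)$ in the conditioned random graph (Theorems~\ref{thm:superindep} and \ref{thm:indep-subgraphs}), from which realization of every feasible extension follows w.h.p. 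Your appeal to the $\mu$-Gowers machinery is pointing at the right engine, but it has to be invoked in this conditional joint form, not merely as concentration of each rooted count; with these two repairs your outline becomes the paper's proof.
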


This result is complemented by the following result, that
shows the distribution of subgraph frequencies$\mod q$ in a random graph $G \in G(n,p)$ is essentially uniform in the space of all feasible frequency vectors, up to the obvious restriction that the number of vertices (namely the frequency of $\K_1$ in $G$) should equal $n \mod q$.
\begin{theorem}[Distribution of subgraph frequencies$\mod q$ depends only on $n \mod q$]
\label{thm:Main2}
Let $p \in (0,1)$. Let $G \in G(n, p)$. Then for any constant $a$, the distribution of
$\freq_G^a$ is $\exp(-n)$-close to the uniform distribution over the set
$$\{ f \in \Freq(a) : f_{K_1} \equiv n \mod q \}.$$
\end{theorem}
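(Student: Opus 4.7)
My plan is to prove Theorem~\ref{thm:Main2} via Fourier analysis on the finite abelian group $\Z_q^{\Conn^a}$, showing that every non-trivial Fourier coefficient of the distribution of $\freq_G^a$ is exponentially small. By Lemma~\ref{lem:aut-ident} the distribution is supported on $\Freq(a)$, and the coordinate $[\K_1]_q(G) \equiv n \bmod q$ is deterministic, so the support lies in the coset $C := \{f \in \Freq(a) : f_{\K_1} \equiv n \bmod q\}$. Letting $\omega = e^{2\pi i/q}$, the character indexed by $c \in \Z_q^{\Conn^a}$ is $\chi_c(f) = \omega^{\sum_F c_F f_F}$, and such a character is constant on $C$ precisely when $c_F = 0$ for every $F \in \Conn^a \setminus \{\K_1\}$ with $q \nmid \aut(F)$. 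Hence $\exp(-\Omega(n))$-closeness to uniform on $C$ follows once I show that for every $c$ whose restriction to the coordinates $\{F \in \Conn^a \setminus \{\K_1\} : q \nmid \aut(F)\}$ is non-zero,
$$ \left|\E_{G \in G(n,p)}\!\left[\omega^{\sum_F c_F [F]_q(G)}\right]\right| \le \exp(-\Omega(n)).$$

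Next I would re-express this Fourier coefficient as the bias of a polynomial. Writing $x \in \{0,1\}^{\binom{n}{2}}$ for the random adjacency vector, with entries drawn independently from the $p$-biased measure $\mu_p$, we have
$$[F](x) = \sum_{\chi : V(F) \hookrightarrow [n]} \prod_{(u,v) \in E(F)} x_{\chi(u)\chi(v)},$$
a multilinear polynomial of degree $|E(F)|$. Consequently $P_c := \sum_F c_F [F]$ is a $\Z_q$-polynomial of degree $d \le \binom{a}{2}$ in $\binom{n}{2}$ edge variables, and the character sum above is exactly the bias $\E_{x \sim \mu_p^{\otimes \binom{n}{2}}}[\omega^{P_c(x)}]$.

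Bounding this bias is the heart of the argument, and here I would invoke the $\mu_p$-Gowers norm generalization of the Babai--Nisan--Szegedy bound developed in Section~\ref{asec:poly}: any sufficiently ``non-degenerate'' bounded-degree $\Z_q$-polynomial has exponentially small bias under the $p$-biased measure. The remaining combinatorial task is to certify non-degeneracy of $P_c$. For this I would examine the top-degree part of $P_c$: among the $F$ with $c_F \neq 0$, select one, call it $F^*$, of maximum edge-count $d$; the degree-$d$ monomials produced by $[F^*](x)$ are indexed by injective images of $V(F^*)$ in $[n]$, and the set of edge variables in each such monomial spans a copy of $F^*$. Since the isomorphism type of the spanned graph determines which $F$ the monomial came from, monomials produced by different graphs of edge-count $d$ cannot collide, so the top-degree part of $P_c$ is a non-zero, highly symmetric multilinear polynomial with $\Theta(n^{|V(F^*)|})$ monomials.

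The main obstacle will be precisely this non-degeneracy step: passing from ``the top-degree part is non-zero'' to the high-rank or large-$\mu_p$-Gowers-norm hypothesis required by the generalized BNS bound, while honouring the feasibility constraints (which, recall, force $c_F = 0$ whenever $q \mid \aut(F)$ and thereby rule out the trivial modular cancellations that would collapse a single $[F]$). I anticipate the cleanest route is an inductive reduction in which one restricts a carefully chosen set of edge variables so as to isolate the contribution $c_{F^*}[F^*]$ at the top, and then appeals to the single-graph bias bounds that Section~\ref{sec:equi} will establish. Once the bias bound is in hand, a union bound over the at most $q^{|\Conn^a|} = O(1)$ non-trivial characters yields the desired $\exp(-\Omega(n))$ total-variation estimate.
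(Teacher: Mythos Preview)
Your approach is the same as the paper's at the architectural level---Fourier analysis (the Vazirani XOR lemma) over $\Z_q^{\Conn^a}$ to reduce to bounding the bias of a bounded-degree polynomial under the $p$-biased measure, then the generalized BNS/Gowers-norm machinery of Section~\ref{asec:poly}. There are, however, two connected slips involving the factor $\aut(F)$.

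First, your parenthetical (``force $c_F=0$ whenever $q\mid\aut(F)$'') contradicts your own earlier, correct characterization. A character $\chi_c$ is constant on $C$ iff it is trivial on the subgroup $\{f\in\Freq(a):f_{\K_1}=0\}$; since $\aut(F)\cdot\Z_q$ equals $\Z_q$ when $q\nmid\aut(F)$ and $\{0\}$ when $q\mid\aut(F)$, triviality forces $c_F=0$ precisely for $F\neq\K_1$ with $q\nmid\aut(F)$. Thus a non-trivial character may have $c_F$ arbitrary whenever $q\mid\aut(F)$; all that is guaranteed is the existence of some $F_0\neq\K_1$ with $c_{F_0}\neq 0$ and $q\nmid\aut(F_0)$.

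Second, and consequently, your choice of $F^*$ as the $F$ of maximum edge-count with $c_F\neq 0$ can fail: if that $F^*$ happens to satisfy $q\mid\aut(F^*)$, then every degree-$|E_{F^*}|$ monomial of $[F^*]$ carries coefficient $c_{F^*}\cdot\aut(F^*)\equiv 0$ in $P_c$, and the ``top-degree part'' you isolated is identically zero over $\Z_q$. The paper avoids this by passing at the outset from $[F]$ to $\langle F\rangle$ via Lemma~\ref{lem:autunlab}: one first proves equidistribution of the copy-counts $(\langle F\rangle_q(G))_F$ (Theorem~\ref{thm:uniform}), where every monomial appears with coefficient exactly~$1$, so the top-degree part is manifestly non-zero and a greedy collection of vertex-disjoint copies of $F^*$ feeds directly into Lemma~\ref{lem:polybias1}---this is exactly the non-degeneracy step you flagged as the main obstacle, and it dissolves once the coefficients are units. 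The translation back to $[F]_q$ is then the one-line map $f_F=\aut(F)\cdot g_F$. Your route is repaired by the same substitution $[F]=\aut(F)\cdot\langle F\rangle$ at the polynomial level (equivalently, by choosing $F^*$ of maximum edge-count among those $F$ with $c_F\cdot\aut(F)\not\equiv 0$), after which it coincides with the paper's argument.
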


Theorem~\ref{thm:Main2} is proved in Section~\ref{sec:equi} by studying the bias of
 multivariate polynomials over finite fields via a generalization of the Gowers norm.
Theorem~\ref{thm:Main1} is proved in Section~\ref{sec:quant} using two main ingredients:
\begin{enumerate}
\item A generalization of Theorem~\ref{thm:Main2} that determines the joint distribution
of the frequencies of ``labelled subgraphs" with given roots (see Section~\ref{sec:indep}).
\item A variant of quantifier elimination (that may be called quantifier conversion) designed to handle $\Mod_q$ quantifiers that crucially uses the probabilistic input from the previous ingredient
(see Section~\ref{sec:quant}).
\end{enumerate}

\begin{proofof}{Theorem~\ref{thm:Main}}
Follows by combining Theorem~\ref{thm:Main1} and Theorem~\ref{thm:Main2}.
\end{proofof}

\subsection{Pseudorandomness against $\FOP$}

We now point out three simple corollaries of our study of $\FOP$ on random graphs.
%The second and third ones follow from the first using known results from the literature.

\begin{corollary}[$\FOM$ is well approximated by low-degree polynomials]
\label{lem:lowdegapprox}
For every $\FOM$-sentence $\varphi$, there is a constant $d$, such that for each $n \in \mathbb N$, there is a degree $d$ polynomial $P( (X_e)_{e \in {n \choose 2}}) \in \Z_q[(X_e)_{e \in {n\choose 2}}]$, such that for all $p \in (0,1)$,
$$ \Pr_{G \in G(n,p)} [ (G \mbox{ satisfies } \varphi) \Leftrightarrow P(A_G)=1] \geq 1 - 2^{-\Omega(n)},$$
where $A_G \in \{0,1\}^{n \choose 2}$ is the adjacency matrix of $G$.
\end{corollary}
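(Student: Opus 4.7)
The plan is to derive this corollary almost directly from Theorem~\ref{thm:Main1}, by observing that both ingredients of the conclusion of that theorem, the subgraph frequency vector and the function~$\psi$, are themselves computed by polynomials of constant degree over~$\Z_q$.

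First, I would apply Theorem~\ref{thm:Main1} to the given sentence $\varphi$ of quantifier depth $t$, obtaining a constant $c = c(t,q)$ and a function $\psi : \Z_q^{\Conn^c} \to \{0,1\}$ such that $(G \text{ satisfies } \varphi) \Leftrightarrow (\psi(\freq^c_G) = 1)$ with probability at least $1 - \exp(-n)$ over $G \in G(n,p)$. Next I would express $\freq^c_G$ in terms of the edge variables $X_e$: for each $F = (V_F, E_F) \in \Conn^c$, the coordinate $[F]_q(G)$ is, by definition, the number mod $q$ of injective maps $\chi : V_F \to [n]$ with $\chi$ preserving edges, so
\[
[F]_q(G) \;=\; \sum_{\chi : V_F \hookrightarrow [n]} \prod_{(u,v) \in E_F} X_{\chi(u)\chi(v)} \pmod{q},
\]
a polynomial in $(X_e)_{e \in \binom{n}{2}}$ of degree $|E_F| \le \binom{c}{2}$. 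Hence every entry of $\freq^c_G$ is computed by a polynomial of degree at most $\binom{c}{2}$.

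Since $\Conn^c$ has constant size $N = |\Conn^c|$, and any function $\Z_q^{N} \to \Z_q$ can be written (by Lagrange interpolation over the finite domain $\Z_q^N$) as a polynomial of total degree at most $N(q-1)$, the function $\psi$ is realized by a polynomial $\Psi \in \Z_q[Y_F : F \in \Conn^c]$ of degree $O_q(N)$, taking values in $\{0,1\}$ on $\Z_q^N$. Composing, I set
\[
P\bigl((X_e)_{e \in \binom{n}{2}}\bigr) \;=\; \Psi\Bigl(\bigl([F]_q(G)\bigr)_{F \in \Conn^c}\Bigr),
\]
which is a polynomial in the $X_e$ of total degree at most $\binom{c}{2}\cdot N(q-1) =: d$, a constant depending only on $\varphi$ (through $t$ and $q$). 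By construction $P(A_G) = \Psi(\freq^c_G) = \psi(\freq^c_G) \in \{0,1\}$ for every graph $G$, and $P(A_G) = 1$ iff $\psi(\freq^c_G) = 1$. Combining with the probabilistic guarantee from Theorem~\ref{thm:Main1} gives $(G \text{ satisfies } \varphi) \Leftrightarrow (P(A_G) = 1)$ with probability at least $1 - \exp(-n) = 1 - 2^{-\Omega(n)}$, as required.

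Since Theorem~\ref{thm:Main1} has already been stated and is available to cite, there is no genuine obstacle in this argument; the whole proof is an assembly step. The only mildly delicate point is bookkeeping the degree: one must verify both that each coordinate of $\freq^c_G$ is a polynomial of degree $O(c^2)$ in the $X_e$ and that $\psi$, as a function on the finite set $\Z_q^{\Conn^c}$, admits a polynomial representation of degree depending only on $c$ and $q$, so that their composition has degree $d = d(\varphi)$ independent of $n$.
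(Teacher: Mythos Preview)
Your proof is correct and follows essentially the same approach as the paper: invoke Theorem~\ref{thm:Main1}, write each coordinate $[F]_q(G)$ of $\freq^c_G$ as a constant-degree polynomial in the edge variables, and compose with a polynomial representation of $\psi$. The paper's proof is terser (it only mentions the polynomial for $[F]_q(G)$ and leaves the representation of $\psi$ implicit), but you have filled in exactly the details one would expect.
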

\begin{proof}
Follows from Theorem~\ref{thm:Main1} and the observation that for any graph $F$ of constant size, there is a polynomial $Q((X_e)_{e \in {n \choose 2}})$ of constant degree, such that $Q(A_G) = [F]_q(G)$ for all graphs $G$.
\end{proof}

\begin{corollary}[PRGs against $\FOP$]
For each $s \in \mathbb N$ and constant $\epsilon > 0$, there is a constant $c \geq 0$ such that for each $n$, there is a family $\mathcal F$ of $\Theta(n^c)$ graphs on $n$ vertices, computable in time $\mathrm{poly}(n^c)$, such that for all $\FOP$-sentences $\varphi$ of size at most $s$, and for all $p \in (0,1)$,
$$|\Pr_{G \in \mathcal F}[G\mbox{ satisfies }\varphi] - \Pr_{G \in G(n,p)}[G \mbox{ satisfies }\varphi]| < \epsilon.$$
\end{corollary}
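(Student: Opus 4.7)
The plan is to take $\mathcal{F}$ to be the image of a pseudorandom generator against low-degree polynomials over $\F_2$, and to transfer the pseudorandomness from polynomials to $\FOP$-sentences by way of Corollary~\ref{lem:lowdegapprox}. I first observe that, up to renaming of bound variables, there are only finitely many $\FOP$-sentences of size at most $s$, so by Corollary~\ref{lem:lowdegapprox} there is a single constant $d = d(s)$ such that for every such $\varphi$ and every $n$ there is an $\F_2$-polynomial $P_\varphi$ of degree $\leq d$ in the $\binom{n}{2}$ adjacency variables with
$$\Pr_{G \in G(n,p)}\bigl[(G \text{ satisfies } \varphi) \Leftrightarrow (P_\varphi(A_G) = 1)\bigr] \geq 1 - 2^{-\Omega(n)}$$
uniformly in $p \in (0,1)$. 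I then take $\mathcal{F}$ to be the image of a pseudorandom generator that $(\epsilon/4)$-fools degree-$d$ polynomials over $\F_2$ on $\binom{n}{2}$ variables; by the constructions of Bogdanov--Viola, Lovett, and Viola this can be achieved with seed length $O_{d,\epsilon}(\log n)$, giving $|\mathcal{F}| = \Theta(n^c)$ for some $c = c(s,\epsilon)$.

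The core estimate is then a four-term triangle inequality,
$$|\Pr_{G \in \mathcal{F}}[\varphi] - \Pr_{G \in G(n,p)}[\varphi]| \leq (\mathrm{I}) + (\mathrm{II}) + (\mathrm{III}) + (\mathrm{IV}),$$
where $(\mathrm{I}) = \Pr_{G \in \mathcal{F}}[\varphi \neq P_\varphi(A_G)]$, $(\mathrm{II}) = |\Pr_{\mathcal{F}}[P_\varphi = 1] - \Pr_{G(n,1/2)}[P_\varphi = 1]|$, $(\mathrm{III}) = \Pr_{G \in G(n,1/2)}[\varphi \neq P_\varphi(A_G)]$, and $(\mathrm{IV}) = |\Pr_{G(n,1/2)}[\varphi] - \Pr_{G(n,p)}[\varphi]|$. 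Term $(\mathrm{II}) \leq \epsilon/4$ is immediate from the PRG property; term $(\mathrm{III}) \leq 2^{-\Omega(n)}$ by Corollary~\ref{lem:lowdegapprox}; and term $(\mathrm{IV}) \leq \epsilon/4$ follows for $n$ large by Theorem~\ref{thm:Main}, once one observes that the convergence in that theorem is $\exp(-\Omega(n))$-uniform in $p$, since the error bounds in Theorems~\ref{thm:Main1} and~\ref{thm:Main2} are themselves uniform in $p$.

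The main obstacle is bounding $(\mathrm{I})$, since the PRG a priori only fools $P_\varphi$ as a polynomial and gives no direct handle on the Boolean function $G \mapsto \mathbf{1}[G \text{ satisfies } \varphi]$ outside the ``typical'' set where the two agree. I would resolve this by using the more detailed form of $P_\varphi$ furnished by the proofs of Theorems~\ref{thm:Main} and~\ref{thm:Main1}: one can take $P_\varphi(A_G) = \psi(\freq^c_G)$ for a fixed $\psi : \Z_2^{\Conn^c} \to \{0,1\}$ and some constant $c = c(s)$, and the disagreement set $\{G : (G \text{ satisfies } \varphi) \not\Leftrightarrow \psi(\freq^c_G) = 1\}$ can itself be written, essentially by inspecting the quantifier-elimination argument proving Theorem~\ref{thm:Main1}, as the union of boundedly many constant-degree polynomial events over $\F_2$ capturing the ``typicality'' conditions used there. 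Enlarging $d$ to a (still $s$-dependent) constant that covers these events, the PRG also $(\epsilon/4)$-fools their indicators, so $(\mathrm{I}) \leq \Pr_{G(n,1/2)}[\text{disagreement}] + \epsilon/4 \leq 2^{-\Omega(n)} + \epsilon/4$. Summing the four contributions yields $|\Pr_{\mathcal{F}}[\varphi] - \Pr_{G(n,p)}[\varphi]| \leq \epsilon$ for all $n$ sufficiently large in terms of $s$ and $\epsilon$, as required.
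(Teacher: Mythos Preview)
Your overall strategy—combine Corollary~\ref{lem:lowdegapprox} with Viola's PRG against constant-degree $\F_2$-polynomials, and handle general $p$ by reducing to $p=1/2$—is the paper's approach. The paper's own proof is two sentences and glosses over exactly the point you flag as the main obstacle, namely term $(\mathrm{I}) = \Pr_{G\in\mathcal F}[\varphi(G)\neq P_\varphi(A_G)]$. You are right that this does not follow from the PRG property applied to $P_\varphi$ alone.

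However, your proposed resolution of $(\mathrm{I})$ does not work. You claim the disagreement set can be written as (or contained in) a union of boundedly many constant-degree polynomial events over $\F_2$, each of small measure. This is impossible: any nonzero degree-$d$ polynomial over $\F_2^m$ has at least $2^{m-d}$ ones (the Reed--Muller/Schwartz--Zippel bound), so any nonempty constant-degree polynomial event has probability at least $2^{-d}$ under the uniform measure, and a bounded union of such events still has probability bounded below by a positive constant. Since the disagreement set has probability $2^{-\Omega(n)}$, no such container can itself have small measure, and your chain $(\mathrm{I})\le\Pr_{G(n,1/2)}[\text{disagreement}]+\epsilon/4$ breaks. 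For a concrete obstruction, take $\varphi=\exists x\,\forall y\,(x=y\vee E(x,y))$: the approximating polynomial is $P_\varphi\equiv 0$, and the disagreement set is $\{G:G\text{ has a universal vertex}\}$, whose indicator as an $\F_2$-polynomial in the edge variables has degree $\Theta(n^2)$. Inspecting the quantifier-elimination proof does not help either: the ``typicality'' events there (for instance $B_1$ in the $\exists$ case, a Chernoff-type threshold on type counts) are not $\F_2$-polynomial conditions at all.

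So you have correctly identified a genuine lacuna that the paper's terse proof does not address, but your patch fails for the reason above; a correct argument would need something like sandwiching polynomials rather than a single approximator. Your treatment of term $(\mathrm{IV})$ differs cosmetically from the paper's—you invoke Theorem~\ref{thm:Main} while the paper appeals directly to the $p$-independence in Theorem~\ref{thm:Main2}—but both routes are fine.
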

\begin{proof}
For $p = 1/2$, this follows from the previous corollary and the result of Viola~\cite{Vio} (building on results of Bogdanov-Viola~\cite{BogVio} and Lovett~\cite{Lov}) constructing a pseudorandom generator fooling low-degree polynomials
under the uniform distribution. For 
general $p$, note that the same family $\mathcal F$ from the $p = 1/2$ case works, since the distribution
of subgraph frequencies given in Theorem~\ref{thm:Main2} is independent of $p$.
\end{proof}

The analogue of the previous corollary for $\FO$ was proved in~\cite{GS-paley, BEH-paley} (see also~\cite{BR-extension, NNT}).
%, where the corresponding family $\mathcal F$ has cardinality 1!

\begin{corollary}[Explicit functions exponentially hard for $\FOP$]
There is an explicit function $f : \{0,1\}^{n \choose 2} \rightarrow \{0,1\}$
such that for every $\FOP$-sentence $\varphi$,
$$\Pr_{G \in G(n,p)}[ (G\mbox{ satisfies }\varphi) \Leftrightarrow (f(A_G) = 1)] < \frac12 + 2^{-\Omega(n)}.$$
\end{corollary}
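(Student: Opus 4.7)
The plan is to combine the low-degree polynomial approximation of $\FOP$ sentences (Corollary~\ref{lem:lowdegapprox}) with an explicit function that is exponentially uncorrelated with constant-degree polynomials under the $p$-biased product measure. Specifically, by Corollary~\ref{lem:lowdegapprox}, for every $\FOP$-sentence $\varphi$ there is a constant $d=d(\varphi)$ and a polynomial $P \in \F_2[(X_e)_{e \in \binom{[n]}{2}}]$ of degree at most $d$ such that
$$\Pr_{G\in G(n,p)}\bigl[(G\text{ satisfies }\varphi) \Leftrightarrow (P(A_G)=1)\bigr] \;\ge\; 1 - 2^{-\Omega(n)}.$$
Hence, via a triangle inequality on indicators, it suffices to exhibit an explicit $f:\{0,1\}^{\binom{n}{2}}\to\{0,1\}$ with the property that, for every constant $d$ and every $\F_2$-polynomial $P$ of degree at most $d$ in $N=\binom{n}{2}$ variables,
$$\Pr_{G\in G(n,p)}\bigl[f(A_G)=P(A_G)\bigr] \;\le\; \tfrac{1}{2}+2^{-\Omega(n)}.$$

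For this, I would take $f$ to be a generalized inner product-type function on the $N = \binom{n}{2}$ edge coordinates: partition the coordinates into $\lfloor N/(d+1)\rfloor$ disjoint blocks of size $d+1$ and define $f(A_G)$ to be the $\F_2$-sum, over blocks, of the product of the variables in each block. Under the uniform distribution this is the classical $\mathrm{GIP}$ function, which Babai--Nisan--Szegedy~\cite{BNS} show has correlation at most $2^{-\Omega(N/4^d)}$ with every degree-$d$ polynomial. To port this bound to the $p$-biased product measure $\mu_p$, which is the distribution of $A_G$ when $G \in G(n,p)$, I would invoke the $\mu$-Gowers norm framework of Section~\ref{asec:poly}: the extended BNS-style bound proved there (via the Viola--Wigderson~\cite{VW} argument adapted to the $\mu$-Gowers setting) controls the $\mu_p$-correlation of the character $(-1)^f$ with any degree-$d$ polynomial by a constant power of the $\mu_p$-Gowers $U^{d+1}$-norm of $(-1)^f$, and a direct computation for the block-structured $f$ shows this norm decays as $\exp(-\Omega(N))=\exp(-\Omega(n^2))$ uniformly in $p$ bounded away from $0$ and $1$. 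Since $n^2 \gg n$, the desired $\tfrac{1}{2}+2^{-\Omega(n)}$ bound follows with room to spare.

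The main obstacle is the last step: the classical BNS proof uses independence of uniformly random bits and the standard Gowers norm, and neither is available under $\mu_p$ for $p \neq 1/2$. This is precisely why the paper develops the $\mu$-Gowers norm in Section~\ref{asec:poly} and generalizes both the Cauchy--Schwarz chain and the standard correlation-vs-Gowers-norm inequality to arbitrary product measures; once those tools are in place, the inductive structure of the GIP argument from~\cite{BNS,VW} goes through essentially unchanged for the block-structured $f$ above. Combining the $2^{-\Omega(n)}$ approximation error from Corollary~\ref{lem:lowdegapprox} with the $\tfrac{1}{2}+2^{-\Omega(n)}$ correlation bound against arbitrary low-degree polynomials then completes the proof of the corollary.
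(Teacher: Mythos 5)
Your overall route is the same as the paper's: approximate $\varphi$ by a constant-degree polynomial via Corollary~\ref{lem:lowdegapprox}, then exhibit an explicit function exponentially uncorrelated, under the $p$-biased measure, with all low-degree polynomials, which is exactly what the generalized BNS bound (Lemma~\ref{lem:VW}, proved through the $\mu$-Gowers norm machinery) supplies. The triangle-inequality reduction and the appeal to Lemma~\ref{lem:VW} are fine.

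There is, however, a genuine gap in your construction of the hard function: you partition the $\binom{n}{2}$ edge coordinates into blocks of size $d+1$, but $d=d(\varphi)$, so your $f$ depends on the sentence, whereas the corollary quantifies ``there is an explicit $f$ such that for \emph{every} $\FOP$-sentence $\varphi$\ldots''. The dependence is not cosmetic: your block-GIP $f$ is itself an $\F_2$-polynomial of degree $d+1$, so it correlates perfectly with a degree-$(d+1)$ polynomial, and sentences of larger quantifier depth are approximated (via Corollary~\ref{lem:lowdegapprox}) by polynomials of arbitrarily large constant degree; for those $\varphi$ your argument gives nothing. The standard repair is to let the block size $b(n)$ grow slowly with $n$ (e.g.\ $b(n)=\Theta(\log\log n)$): then for each fixed $\varphi$, eventually $b(n)>d(\varphi)$ and one can apply the correlation bound with $r=\Theta\bigl(\binom{n}{2}/b(n)\bigr)$ surviving blocks. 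But note this needs more than Lemma~\ref{lem:VW} as stated, since there the implied constant $\Omega_{q,p,d}$ treats the block size as a fixed constant; you must open up the proof (the $\epsilon$ in Lemma~\ref{gowerslemma1} and its dependence on the block size and on $p$) and verify that the degradation is only exponential in $b(n)$, so that the final bound $\exp(-\Omega(r\cdot 2^{-O(b(n))}))$ is still $2^{-\Omega(n)}$ for every fixed $p\in(0,1)$. With that quantifier fix and bookkeeping, your argument matches the paper's intended proof; as a side remark, the paper also observes that $f$ can even be taken to be a graph property ($S_n$-invariant), which a fixed coordinate-partition GIP is not, though the corollary as stated does not require this.
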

\begin{proof}
Follows from Corollary~\ref{lem:lowdegapprox}, and the result of Babai, Nisan, Szegedy~\cite{BNS}
(for $p = 1/2$) and its generalization, Lemma~\ref{lem:VW} (for general $p$),
constructing functions exponentially uncorrelated with low degree polynomials under the $p$-biased measure.
It actually follows from our proofs that, one may even choose a function $f$ that is a graph property (namely, invariant under the action of $S_n$ on the coordinates).
\end{proof}

\section{The Distribution of Subgraph Frequencies mod q}
\label{sec:equi}

In this section, we prove Theorem~\ref{thm:Main2}
on the distribution of subgraph frequencies in $G(n,p)$.

We first make a few definitions. If $F$ is a connected graph and
$G$ is any graph,  a {\em copy} of $F$ in $G$ is a
set $E \subseteq E_G$ such that there exists an injective homomorphism $\chi$ from $F$ to $G$ such that $E = \chi(E_F) := \{(\chi(v), \chi(w)) \mid (v, w) \in E_F\}$. We denote the set of copies of $F$ in $G$ by $\Cop(F,G)$,
the cardinality of $\Cop(F,G)$ by $\langle F \rangle (G)$,
and this number mod $q$ by $\langle F \rangle_q(G)$. We have the following basic relation (which follows from Lemma~\ref{lem:aut} in Section~\ref{sec:quant}).
%set $E \subseteq E_G$ such that there exists an injective homomorphism $\chi$ from $F$ to $G$ with $E = \chi(E_F) := \{(\chi(v), \chi(w)) \mid (v, w) \in E_F\}$. We denote the number of copies of $F$ in $G$ by $\langle F \rangle (G)$,
%and this number mod $q$ by $\langle F \rangle_q(G)$. We have the following basic relation (proved in Appendix~\ref{asec:graphs}).
\begin{lemma}
\label{lem:autunlab}
If $F$ is a connected graph with $|E_F| \geq 1$, then
$$ [F](G) = \aut(F) \cdot \langle F \rangle(G).$$
\end{lemma}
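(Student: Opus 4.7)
The plan is to show that the natural map sending an injective homomorphism $\chi: F \to G$ to the edge set $\chi(E_F) \in \Cop(F,G)$ is a surjection onto $\Cop(F,G)$ all of whose fibers have size exactly $\aut(F)$. Counting both sides of this fibration then gives $[F](G) = \aut(F) \cdot \langle F \rangle(G)$.

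First, I would set up the map $\Phi : \Inj(F, G) \to \Cop(F,G)$ by $\Phi(\chi) = \chi(E_F)$; surjectivity is immediate from the definition of $\Cop(F,G)$. The real content is proving that for any fixed copy $E \in \Cop(F,G)$, the fiber $\Phi^{-1}(E)$ is in bijection with $\Aut(F)$. Here I would fix one injective homomorphism $\chi_0$ with $\chi_0(E_F) = E$ and exhibit the bijection $\Aut(F) \to \Phi^{-1}(E)$ given by $\sigma \mapsto \chi_0 \circ \sigma$. That this map lands in $\Phi^{-1}(E)$ is obvious; injectivity follows because $\chi_0$ is injective.

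The main step, and the one where the hypotheses are used, is surjectivity of $\sigma \mapsto \chi_0 \circ \sigma$. Given any other $\chi \in \Phi^{-1}(E)$, I would first observe that since $F$ is connected with $|E_F| \ge 1$, every vertex of $F$ is an endpoint of some edge, so $\chi(V_F)$ is exactly the set of vertices incident to some edge of $\chi(E_F) = E$, and likewise for $\chi_0$. Hence $\chi(V_F) = \chi_0(V_F)$, and $\sigma := \chi_0^{-1} \circ \chi$ is a well-defined permutation of $V_F$. To see $\sigma \in \Aut(F)$, take an edge $(u,v) \in E_F$: then $(\chi(u),\chi(v)) \in E = \chi_0(E_F)$, so there is $(u',v') \in E_F$ with $\{\chi_0(u'),\chi_0(v')\} = \{\chi(u),\chi(v)\}$, i.e. $\{\sigma(u),\sigma(v)\} = \{u',v'\} \in E_F$. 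Thus $\sigma$ preserves edges, and since it is a bijection of $V_F$ on a finite graph, $\sigma \in \Aut(F)$ and $\chi = \chi_0 \circ \sigma$.

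Putting the pieces together, $|\Inj(F,G)| = \sum_{E \in \Cop(F,G)} |\Phi^{-1}(E)| = \aut(F) \cdot |\Cop(F,G)|$, which is the identity $[F](G) = \aut(F) \cdot \langle F \rangle (G)$. The only subtle point, and where I expect the most care is needed, is justifying $\chi(V_F) = \chi_0(V_F)$; this is precisely where the hypothesis that $F$ is connected with at least one edge (hence has no isolated vertices) is essential, because otherwise an isolated vertex of $F$ could be mapped freely to many places in $G$ without changing $\chi(E_F)$, breaking the count.
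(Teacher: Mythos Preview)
Your proof is correct and is essentially the same argument the paper gives: the paper derives this lemma from Lemma~\ref{lem:aut} (the labelled version), whose proof constructs the bijection $\Aut(F)\times\Cop(F,(G,\mathbf w))\to\Inj(F,(G,\mathbf w))$ via $(\sigma,E)\mapsto \chi_E\circ\sigma$, which is exactly your fibration with constant-size fibers, and uses connectedness in the same way to conclude $\chi(V_F)=\chi_0(V_F)$.
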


For notational convenience, we view $G(n, p)$ as a graph
whose vertex set is $[n]$ and whose edge set is
a subset of ${[n] \choose 2}$.

We can now state the general equidistribution theorem from which Theorem~\ref{thm:Main2} will follow easily (We use the notation $\Omega_{q,p,d}(n)$ to denote
the expression $\Omega(n)$, where the implied constant depends only on $q$, $p$ and $d$). Note that this theorem holds for arbitrary integers $q$, not necessarily prime.
\begin{theorem}[Equidistribution of graph copies]
\label{thm:uniform}
Let $q > 1$ be an integer and let $p \in (0,1)$.
Let $F_1, \ldots, F_\ell \in \Conn^a$ be distinct graphs with $1 \leq |E_{F_i}| \leq d$.

Let $G \in G(n,p)$.
Then the distribution of $(\langle F_1\rangle_q(G), \ldots, \langle F_\ell \rangle_q(G))$ on $\Z_q^\ell$ is
$2^{-\Omega_{q, p, d}(n) + \ell}$-close to uniform in statistical distance.
\end{theorem}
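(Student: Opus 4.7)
The plan is to reduce bounding the statistical distance to uniform to bounding the bias of a single low-degree polynomial, and then invoke the $\mu$-Gowers norm machinery announced for Section~\ref{asec:poly}. By standard character-theoretic arguments, the distribution of $(\langle F_1\rangle_q(G), \ldots, \langle F_\ell\rangle_q(G))$ is $\delta$-close to uniform on $\Z_q^\ell$ whenever for every non-zero $\lambda = (\lambda_1, \ldots, \lambda_\ell) \in \Z_q^\ell$ one has
$\bigl|\E_G[\omega_q^{Z_\lambda(G)}]\bigr| \leq \delta$, where $Z_\lambda(G) = \sum_i \lambda_i \langle F_i\rangle_q(G)$ and $\omega_q = e^{2\pi i/q}$. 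Summing over the $q^\ell - 1$ non-zero characters absorbs the factor $2^\ell$ in the stated bound, so it suffices to obtain a bound of $2^{-\Omega_{q,p,d}(n)}$ on each single character.

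The second step is to rewrite $Z_\lambda(G)$ as the evaluation of a polynomial at a random point. Identify a graph $G$ on $[n]$ with its edge indicator vector $x \in \{0,1\}^{\binom{[n]}{2}}$ distributed according to the $p$-biased measure $\mu_p$. Then
\[
 Z_\lambda(x) \;=\; P_\lambda(x) \;:=\; \sum_{i=1}^\ell \lambda_i \!\!\sum_{H \cong F_i,\ H \subseteq K_n}\!\! \prod_{e \in E(H)} x_e \;\in\; \Z_q[x_e : e \in \tbinom{[n]}{2}],
\]
a polynomial of degree at most $d$. Because the $F_i$ are pairwise non-isomorphic, the monomials coming from different $F_i$ are disjoint (each multilinear monomial $\prod_{e \in S} x_e$ encodes a unique subgraph), so $P_\lambda$ is non-zero over $\Z_q$ whenever $\lambda \neq 0$.

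The third step is to bound $|\E_{x \sim \mu_p}[\omega_q^{P_\lambda(x)}]|$ using the $\mu_p$-Gowers norm developed in Section~\ref{asec:poly}. One expects an inequality of the form $\bias_{\mu_p}(P_\lambda)^{2^{d}} \leq \|\omega_q^{P_\lambda}\|_{U^{d+1}(\mu_p)}^{2^{d+1}}$, which reduces the task to showing that the iterated discrete derivatives $\Delta_{y_1}\cdots\Delta_{y_{d+1}} P_\lambda$ are far from being constantly zero under $\mu_p$. The derivative $\Delta_{y} P_\lambda$ in direction $y \in \{0,1\}^{\binom{[n]}{2}}$ gives, roughly, a polynomial that counts copies of the $F_i$ with a prescribed non-empty subset of edges lying in the support of $y$; differentiating $d$ times along generic directions strips away a complete edge set of some $F_{i^*}$ of maximal size $|E_{F_{i^*}}|=d$, leaving a constant depending on $\lambda_{i^*}$ and on the chosen directions. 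Distinctness of the $F_i$ prevents pairwise cancellation across different $F_i$ sharing the same edge count.

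The main obstacle I anticipate is the final step: proving a quantitative combinatorial lemma that the iterated derivatives of $P_\lambda$ do \emph{not} collapse. Specifically, one must show that with probability $1 - 2^{-\Omega(n)}$ over random directions $y_1, \ldots, y_{d+1} \sim \mu_p$ (and a random base point), the derivative polynomial is non-zero modulo $q$. This is where the hypotheses interact: connectedness of each $F_i$ ensures that specifying an edge fixes a large automorphism-normalized structure so that partial derivatives remain expressible as subgraph-count polynomials of strictly lower degree, and distinctness plus the use of $\aut(F_i)$ in Lemma~\ref{lem:autunlab} ensures the leading coefficient $\lambda_{i^*}$ survives mod $q$. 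Once this structural lemma is established, an induction on $d$ (base case $d=1$ being a direct computation for $F = \K_2$ and its multiples) combined with the $\mu_p$-Gowers tools from Section~\ref{asec:poly} yields the desired $2^{-\Omega_{q,p,d}(n)}$ bound, completing the proof.
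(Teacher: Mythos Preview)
Your first two steps---the character/XOR-lemma reduction and the identification of $Z_\lambda$ with a multilinear polynomial $P_\lambda$ of degree at most $d$ in the $p$-biased edge variables---match the paper exactly (this is Lemma~\ref{lem:xor} together with the setup in the proof of Theorem~\ref{thm:uniform}). The gap is in your third and fourth steps. The inequality you write, $\bias_{\mu_p}(P_\lambda)^{2^d} \leq \|\omega_q^{P_\lambda}\|_{U^{d+1}(\mu_p)}^{2^{d+1}}$, is vacuous: since $P_\lambda$ has degree at most $d$, its $(d{+}1)$-st additive derivative is identically zero, whence $\|\omega_q^{P_\lambda}\|_{U^{d+1}} = 1$. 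Even correcting to the $U^{d}$ norm, your plan to ``show the derivatives are non-zero with high probability'' does not bound the norm: for degree-$d$ $P_\lambda$ the $d$-th derivative is a constant $c(t_1,\dots,t_d)$ in the base point, and $\|\omega_q^{P_\lambda}\|_{U^d}^{2^d} = \bigl|\E_{t}[\omega_q^{c(t)}]\bigr|$; knowing $c(t)\neq 0$ almost surely says nothing about this expectation (the values could concentrate on a single nonzero residue). The appeal to Lemma~\ref{lem:autunlab} and the proposed induction on $d$ do not address this.

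What the paper does instead is to avoid bounding the Gowers norm of $\omega_q^{P_\lambda}$ directly. It selects $F_{i_0}$ of maximum edge-count $d^*$ among those with nonzero coefficient, greedily finds $r=\Omega(n/d)$ \emph{vertex-disjoint} copies $E_1,\dots,E_r$ of $F_{i_0}$ in $\K_n$, and checks the structural claim that every other monomial of $P_\lambda$ meets $\bigcup_j E_j$ in strictly fewer than $d^*$ edges (connectedness of the $F_i$ is exactly what makes this work). This puts $P_\lambda$ in the scope of Lemma~\ref{lem:polybias1}, which conditions on all edge variables outside $\bigcup_j E_j$ to reduce to a Generalized-Inner-Product polynomial plus lower-degree terms (Lemma~\ref{lem:VW}); only \emph{then} are $\mu$-Gowers norms invoked, exploiting the tensor identity $\|\bigotimes_j g_j\|_{U^d} = \prod_j \|g_j\|_{U^d}$ together with a direct verification (Lemma~\ref{gowerslemma1}) that each single-block factor has norm strictly below $1$. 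The product structure over $r$ disjoint blocks is what produces the exponential decay $2^{-\Omega(n)}$; your proposal lacks this ingredient.
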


Using this theorem, we complete the proof of Theorem~\ref{thm:Main2}.

\begin{proofof}{Theorem~\ref{thm:Main2}}
Let $F_1, \ldots, F_\ell$ be an enumeration of the elements of $\Conn^a$ except for $\K_1$.
By Theorem~\ref{thm:uniform}, the distribution of $g = (\langle F_i\rangle_q(G)_{i=1}^{\ell}$ is $2^{-\Omega(n)}$ close to uniform over $\Z_q^\ell$. Given the
vector $g$, we may compute the vector $\freq_G^{a}$ by:
\begin{itemize}
\item $(\freq_{G}^a)_{\K_1} = n \mod q$.
\item For $F \in \Conn^a \setminus\{\K_1\}$, $(\freq_G^a)_{F} = g_{F} \cdot \aut(F)$ (by Lemma~\ref{lem:autunlab}).
\end{itemize}
This implies that the distribution of $\freq_G^a$ is $2^{-\Omega(n)}$-close to uniformly distributed over
$\{ f \in \Freq(a): f_{\K_1} = n \mod q\}$.
\end{proofof}

Towards proving Theorem~\ref{thm:uniform}, we now introduce some tools.
\subsection{Preliminary lemmas}

As indicated in the introduction, the distribution
of subgraph frequencies is most naturally studied
via the distribution of values of certain polynomials.
The following lemma, which is used in the proof of Theorem~\ref{thm:uniform}
(and again in Section~\ref{sec:indep} to study the distribution of labelled subgraph frequencies),
gives a simple sufficient criterion
for the distribution of values of a polynomial to be ``unbiased".
The proof appears in Section~\ref{asec:poly}.
\begin{lemma}
\label{lem:polybias1}
Let $q > 1$ be an integer and let $p \in (0,1)$.
Let\footnote{If $S$ is a set, we use the notation $2^{S}$ to denote its power set.} $\mathcal F \subseteq 2^{[m]}$. Let $d > 0$ be an integer.
Let $Q(Z_1, \ldots, Z_m) \in \Z_q[Z_1, \ldots, Z_m]$ be a polynomial of the form
$$\sum_{S \in \mathcal F} a_S \prod_{i \in S} Z_i + Q'(\mathbf Z),$$
where $\deg(Q') < d$.
Suppose there exist $\mathcal E = \{E_1, \ldots, E_r\} \subseteq \mathcal F$ such that:
\begin{itemize}
\item $|E_j| = d$ for each $j$,
\item $a_{E_j} \neq 0$ for each $j$.
\item $E_j \cap E_{j'} = \emptyset$ for each $j, j'$,
\item For each $S \in \mathcal F\setminus\mathcal E$, $|S \cap (\cup_j E_j)| < d$.
\end{itemize}
Let $\mathbf z = (z_1, \ldots, z_m) \in \Z_q^m$ be the random variable
where, independently for each $i$, we have $\Pr[z_i = 1] = p$ and $\Pr[z_i = 0] = 1-p$.
Then,
$$\left|\mathbb E \left[ \omega^{Q(\mathbf z)} \right]\right| \leq 2^{-\Omega_{q,p,d}(r)},$$
where $\omega \in \mathbb C$ is a primitive $q^{\rm{th}}$-root of unity.
\end{lemma}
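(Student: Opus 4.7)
The plan is to view $|\E[\omega^{Q(\mathbf z)}]|$ as the bias of a degree-$d$ polynomial under the $p$-biased Bernoulli measure $\mu$ and to bound it by combining the $\mu$-Gowers norm machinery promised in Section~\ref{asec:poly} with the block structure supplied by the hypotheses. The main analytic input will be a Babai-Nisan-Szegedy-type inequality in its $\mu$-version: $|\E_\mu[\omega^Q]|^{2^d}$ is bounded by an expectation, over appropriately paired auxiliary shift variables $h_1,\dots,h_d$, of $\omega$ raised to the $d$-fold discrete derivative $D_{h_1}\cdots D_{h_d} Q(\mathbf z)$. Since $\deg Q = d$, this derivative is constant in $\mathbf z$ and depends only on the degree-$d$ monomials $a_S \prod_{i \in S} Z_i$ of $Q$; the low-order terms in $Q'$ and any $S$ with $|S|<d$ drop out.

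The structural hypotheses then force the surviving top form to split along the blocks $E_1,\dots,E_r$. A $d$-fold derivative annihilates every monomial of degree less than $d$ and, on a monomial of degree exactly $d$, selects one variable out of each of its factors. After conditioning on the shift coordinates outside $W := \bigcup_{j} E_j$, the condition $|S \cap W| < d$ for $S \in \mathcal F \setminus \mathcal E$ forces every such $S$ to contribute a product in which at least one variable lies outside $W$ and has already been fixed, so it absorbs into a conditioning-dependent constant; only the contributions of $S = E_j \in \mathcal E$ survive as genuine randomness. Hence the top form reduces to $\sum_{j=1}^r a_{E_j}\cdot \xi_j$, where each $\xi_j \in \Z_q$ is a random variable depending solely on the shift values inside the block $E_j$.

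Because the blocks $E_j$ are pairwise disjoint and the shift variables inherit a product structure from the $p$-biased measure (and from the auxiliary sampling in the Viola-Wigderson-style construction of the $\mu$-Gowers norm), the $\xi_j$ are mutually independent. Each $\xi_j$ is a non-trivial $\Z_q$-valued random variable: using $a_{E_j}\neq 0$ and $p \in (0,1)$, one verifies that $a_{E_j}\xi_j$ is not almost-surely equal to any single residue, so $|\E[\omega^{a_{E_j}\xi_j}]| \le 1 - \delta$ for some $\delta = \delta(q,p,d) > 0$. Independence across blocks then gives $|\E[\omega^{\sum_j a_{E_j}\xi_j}]| = \prod_j |\E[\omega^{a_{E_j}\xi_j}]| \le (1-\delta)^r$, and undoing the outer $2^d$-th power yields the advertised $|\E[\omega^{Q(\mathbf z)}]| \le 2^{-\Omega_{q,p,d}(r)}$.

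The main obstacle is installing the $\mu$-Gowers norm Cauchy-Schwarz chain correctly in the $p$-biased Bernoulli setting: the classical BNS argument for uniform measure over $\F_2$ relies on additive shifts inside the domain, which is not directly available here because $\mathbf z$ ranges over the Boolean cube. This is exactly where the Viola-Wigderson technique and the generalized BNS machinery of Section~\ref{asec:poly} enter, substituting paired samples from $\mu$ for the additive shifts and carrying the iterated Cauchy-Schwarz bookkeeping through at the cost of the outer $2^d$-th root. Once that framework is set up, the disjointness of $E_1,\dots,E_r$ together with the hypothesis on $\mathcal F \setminus \mathcal E$ makes the block-multiplicative conclusion essentially immediate.
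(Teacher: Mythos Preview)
Your approach is in the right spirit and uses the same toolbox, but the paper's proof is organized differently and is considerably simpler. Rather than applying the Gowers/Cauchy--Schwarz chain to $Q$ directly and then conditioning on \emph{shift} coordinates outside $W=\bigcup_j E_j$, the paper first conditions on the \emph{input} coordinates $z_i$ for $i\notin W$. For each such fixing $x$, the resulting polynomial $Q_x(\mathbf Y)$ in the $W$-variables has, by the hypothesis $|S\cap W|<d$ for $S\in\mathcal F\setminus\mathcal E$, the exact form $\sum_{j=1}^r a_{E_j}\prod_{i\in E_j}Y_i + R(\mathbf Y)$ with $\deg R<d$. This is precisely the situation of Lemma~\ref{lem:VW} (the generalized BNS/Viola--Wigderson lemma for GIP plus low degree), which the paper invokes as a black box; averaging over $x$ finishes. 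All the $\mu$-Gowers work you describe is packaged inside Lemma~\ref{lem:VW} and Theorem~\ref{thm:supergowers}, not redone here.

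One concrete gap in your sketch: you assert $\deg Q=d$, but the lemma's hypotheses do not bound $|S|$ for $S\in\mathcal F$. If some $|S|>d$, the $d$-fold derivative of $\omega^Q$ is \emph{not} constant in $\mathbf z$, and your step ``this derivative is constant in $\mathbf z$ and depends only on the degree-$d$ monomials'' fails as stated. The paper's order of operations sidesteps this entirely: after fixing the coordinates outside $W$, every term from $S\in\mathcal F$ has $\mathbf Y$-degree $|S\cap W|$, which is $<d$ unless $S\in\mathcal E$, regardless of $|S|$. If you want to salvage your ordering, you would need to condition on the input (not just shift) coordinates outside $W$ before invoking the derivative, at which point you have essentially reproduced the paper's reduction to Lemma~\ref{lem:VW}.
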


The lemma below is a useful tool for showing that a distribution
on $\Z_q^\ell$ is close to uniform.
\begin{lemma}[Vazirani XOR lemma]
\label{lem:xor}
Let $q > 1$ be an integer and let $\omega \in \mathbb C$ be a primitive
$q^{\rm{th}}$-root of unity. Let $\mathbf X = (X_1, \ldots, X_\ell)$ be a random variable over $\Z_q^\ell$.
Suppose that for every nonzero $c \in \Z_q^\ell$,
$$\left|\E \left[\omega^{\modsum_{i \in [\ell]} c_i X_i}\right]\right| \leq \epsilon.$$
Then $\mathbf X$ is $q^\ell \cdot \epsilon$-close to uniformly
distributed over $\Z_q^\ell$.
\end{lemma}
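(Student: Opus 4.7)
The plan is to prove this by standard Fourier analysis on the finite abelian group $\Z_q^\ell$. Let $\mu$ denote the distribution of $\mathbf X$ and $U$ the uniform distribution on $\Z_q^\ell$, so that statistical distance equals $\tfrac{1}{2}\sum_{x \in \Z_q^\ell} |\mu(x) - U(x)|$. The characters of $\Z_q^\ell$ are exactly the maps $x \mapsto \omega^{c \cdot x}$ for $c \in \Z_q^\ell$, where $c \cdot x := \sum_i c_i x_i$, and the hypothesis on $\mathbf X$ can be reread as a uniform bound of $\epsilon$ on every nontrivial Fourier coefficient of $\mu$.

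First I would define, for each $c \in \Z_q^\ell$, the Fourier coefficient $\hat{\mu}(c) = \E[\omega^{c \cdot \mathbf X}] = \sum_{x} \mu(x)\, \omega^{c \cdot x}$, and observe that $\hat{U}(0) = 1$ while $\hat{U}(c) = 0$ for all $c \neq 0$ (the latter is the standard fact that a nontrivial character sums to zero over the whole group). The hypothesis is then exactly the statement $|\hat{\mu}(c)| \leq \epsilon$ for all $c \neq 0$, while $\hat{\mu}(0) = 1$ automatically.

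Next I would apply Fourier inversion on $\Z_q^\ell$, which writes any function $f$ as $f(x) = \frac{1}{q^\ell}\sum_{c} \hat{f}(c)\, \omega^{-c \cdot x}$. Applying this to the signed measure $\mu - U$ (whose zero-th Fourier coefficient vanishes) yields
\[
\mu(x) - U(x) \;=\; \frac{1}{q^\ell} \sum_{c \neq 0} \hat{\mu}(c)\, \omega^{-c \cdot x}.
\]
Taking absolute values and applying the triangle inequality together with $|\hat{\mu}(c)| \leq \epsilon$ gives the pointwise bound $|\mu(x) - U(x)| \leq \frac{q^\ell - 1}{q^\ell}\, \epsilon \leq \epsilon$ for every $x \in \Z_q^\ell$.

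Finally I would sum this pointwise bound over the $q^\ell$ elements of $\Z_q^\ell$ and halve, obtaining $d_{TV}(\mu, U) = \tfrac{1}{2}\sum_x |\mu(x) - U(x)| \leq \tfrac{1}{2} q^\ell \epsilon \leq q^\ell \epsilon$, as claimed. There is no real obstacle: the proof is essentially a one-line computation once Fourier inversion on $\Z_q^\ell$ is set up, and the only care needed is to confirm that the characters of $\Z_q^\ell$ are exactly $x \mapsto \omega^{c \cdot x}$ and that these form an orthonormal basis, which allows both the inversion formula and the cancellation of the $c = 0$ term.
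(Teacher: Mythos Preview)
Your proof is correct and is the standard Fourier-analytic argument for the Vazirani XOR lemma. The paper itself does not supply a proof of this lemma: it is stated as a preliminary tool and treated as well known, so there is no alternative approach to compare against.
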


\subsection{Proof of the equidistribution theorem}

\begin{proofof}{Theorem~\ref{thm:uniform}}
By the Vazirani XOR Lemma (Lemma~\ref{lem:xor}), it suffices to show
that for each nonzero $c \in \Z_q^{\ell}$, we have
$\left|\E\left[ \omega^R\right] \right| \leq 2^{-\Omega_{q,p,d}(n)}$, 
where $R := \modsum_{i\in[\ell]} c_i \langle F_i\rangle_q(G)$,
and $\omega \in \mathbb C$ is a primitive $q^{\rm{th}}$-root of unity.

We will show this by appealing to Lemma~\ref{lem:polybias1}. Let
$m = {n \choose 2}$.
Let $\mathbf z \in \{0,1\}^{[n] \choose 2}$ be the random variable
where, for each $e \in { [n] \choose 2}$, $z_e = 1$ if and only
if $e$ is present in $G$. Thus, independently for each $e$,
$\Pr[z_e = 1] = p$.

We may now express $R$ in terms of the $z_e$. Let $\K_n$
denote the complete graph on the vertex set $[n]$.
Thus $\Cop(F_i, \K_n)$
is the set of $E$ that could potentially arise as copies of
$F_i$ in $G$. Then we may write,
\begin{align*}
R = \modsum_{i \in [\ell]} c_i \langle F_i \rangle_q(G) &= \modsum_{i \in [\ell]} c_i \sum_{E \in \Cop(F_i, \K_n)} \prod_{e \in E} z_e\\
&= \modsum_{E \in \mathcal F} c_E \prod_{e \in E} z_e,
\end{align*}
where $\mathcal F \subseteq 2^{[n] \choose 2}$ is the set $\bigcup_{i: c_i \neq 0} \Cop(F_i, \K_n)$,
and for $E \in \mathcal F$, $c_E = c_i$ for the unique $i$ satisfying $E \in \Cop(F_i, \K_n)$
(note that since the $F_i$ are nonisomorphic connected graphs, the $\Cop(F_i, \K_n)$ are pairwise
disjoint).

Let $Q(\mathbf Z) \in \Z_q[\mathbf Z]$, where $\mathbf Z = (Z_e)_{e \in {[n]\choose 2}}$  be the polynomial
$\sum_{E \in \mathcal F} c_E \prod_{e \in E} Z_e$.
%$\mathcal F = \{ E' \setminus {V_A \choose 2}: E' \in \mathcal F'\}$ and
%$a_E = \sum_{E' \in \mathcal F': E' \supseteq E} c_{E'} \prod_{e' \in E'\setminus E} z_{e'}.$ In particular, for any $E \in \mathcal F$ with $E \cap {V_A \choose 2} = \emptyset$, we have $a_E = c_E$.
%Note that $\mathbf z$ be the random variable over $\{0,1\}^{{[n] \choose 2}}$ such that
%independently for each $e$, $\Pr[y_e = 1] = p$.
Then $R = Q(\mathbf z)$. We wish to show that
\begin{equation}
\label{eq:uniform}
\left| \E\left[ \omega^{Q(\mathbf z)}\right] \right| \leq 2^{-\Omega_{q,p,d}(n)}.
\end{equation}

We do this by demonstrating that the polynomial $Q(\mathbf Z)$ satisfies the hypotheses of Lemma~\ref{lem:polybias1}.

Let $d^* = \max_{i: c_i \neq 0} |E_{F_i}|$.
Let $i_0 \in [\ell]$ be such that $c_{i_0} \neq 0$ and $|E_{F_{i_0}}| = d^{*}$. Let
$\chi_1, \chi_2, \ldots, \chi_r \in \Inj(F_{i_0}, \K_n)$ be a collection of homomorphisms such that for
all distinct $j, j' \in [r]$, we have $\chi_j(V_{F_{i_0}}) \cap \chi_{j'}(V_{F_{i_0}}) = \emptyset$. Such a collection can be chosen greedily so that $r = \Omega(\frac{n}{d})$.
Let $E_j \in \Cop(F_{i_0},\K_n)$ be given by $\chi_j(E_{F_{i_0}})$.
Let $\mathcal E$ be the family of sets $\{E_1, \ldots, E_r\} \subseteq \mathcal F$. We observe the following properties of the $E_j$:
\begin{enumerate}
\item For each $j \in [r]$, $|E_j| = d^*$ (since $\chi_j$ is injective).
\item For each $j \in [r]$, $c_{E_j} = c_{i_0} \neq 0$.
\item For distinct $j, j' \in [r]$, $E_j \cap E_{j'} = \emptyset$ (by choice of the $\chi_j$).
\item For every $S \in \mathcal F \setminus \mathcal E$, $|S \cap (\cup_j E_j)| < d^*$. To see this, take any $S \in \mathcal F \setminus \mathcal E$ and suppose $|S \cap (\cup_j E_j)| \geq d^*$.
Let $i' \in [\ell]$ be such that $c_{i'} \neq 0$ and $S \in \Cop(F_{i'}, \K_n)$. Let $\chi \in \Inj(F_{i'}, \K_n)$
with $\chi(E_{F_{i'}}) = S$. By choice of $d^*$,
we know that $|S| \leq d^*$.
Therefore, the only way that $|S \cap (\cup_j E_j)|$ can be $\geq d^*$ is
if (1) $|S| = d^*$, and (2) $S \cap (\cup_j E_j) = S$, or in other words, $S \subseteq (\cup_j E_j)$.
However, since the $\chi_j(V_{F_{i_0}})$ are all pairwise disjoint, this implies that $S \subseteq E_j$
for some $j$. But since $|E_j| = |S|$, we have $S = E_j$, contradicting our choice of $S$.
Therefore, $|S \cap (\cup_j E_j)| < d^*$ for any $S \in \mathcal F \setminus \mathcal E$.
\end{enumerate}

It now follows that
 $Q(\mathbf Z)$, $\mathcal F$ and $\mathcal E$ satisfy the hypothesis of
Lemma~\ref{lem:polybias1}. Consequently, (recalling that $r = \Omega(n/d)$ and $d^*\leq d$) Equation~\eqref{eq:uniform} follows, completing
the proof of the theorem.
\end{proofof}

\begin{remark}
We just determined the joint distribution of the number of injective
homomorphisms, mod $q$, from all small connected graphs to $G(n,p)$. 
This information can be used in conjunction with 
Lemma~\ref{lem:kgraphtoconnected} to determine the joint distribution
of the number of injective homomorphisms, mod $q$, from {\em all} small graphs
to $G(n,p)$.
\end{remark}
\section{The Bias of Polynomials}
\label{asec:poly}

Our main goal in this section is to give a full proof of Lemma~\ref{lem:polybias1},
which gives a criterion for a polynomial to be unbiased. Along the way,
we will introduce the $\mu$-Gowers norm and   some of its useful properties.
%We now state and prove some useful
%lemmas about multivariate polynomials over finite
%fields and the distribution
%of their values. We will be especially
%interested in criteria for polynomials to be ``unbiased",
%We collect some useful lemmas on multivariate polynomials over finite
%fields and
%methods for studying the distribution of their values. We will be especially
%interested in criteria for the polynomial to be ``unbiased",
%namely the distribution of its values is uniform over the finite field.

Our proof of Lemma~\ref{lem:polybias1} will go through
the following lemma (which is proved in the next 
subsection). It shows that ``Generalized Inner Product" polynomials
are uncorrelated with polynomials of lower degree.
This generalizes a result of Babai Nisan and Szegedy~\cite{BNS} (which
dealt with the case $q = 2$ and $p = 1/2$).
%Our generalization
%is enabled by the use of the ``$\mu$-Gowers norm".
\begin{lemma}
\label{lem:VW}
Let $q> 1$ be an integer and let $p \in (0,1)$.
Let $E_1, \ldots, E_r$ be pairwise disjoint subsets of $[m]$ each
of cardinality $d$.
Let $Q(Z_1, \ldots, Z_m) \in \Z_q[Z_1, \ldots, Z_m]$ be a polynomial of the form
$$\left(\sum_{j=1}^r a_j \prod_{i \in E_j} Z_i\right) + R(\mathbf Z),$$
where each $a_j \neq 0$ and $\deg(R(\mathbf Z)) < d$.
Let $\mathbf z = (z_1, \ldots, z_m) \in \Z_q^m$ be the random variable
where, independently for each $i$, we have $\Pr[z_i = 1] = p$ and $\Pr[z_i = 0] = 1-p$.
Then,
$$\left|\mathbb E \left[ \omega^{Q(\mathbf z)} \right]\right| \leq 2^{-\Omega_{q,p,d}(r)}.$$
\end{lemma}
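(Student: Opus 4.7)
The plan is to adapt the approach of Viola and Wigderson~\cite{VW} to the $p$-biased setting, using the $\mu$-Gowers norm developed earlier in this section as the main technical vehicle. At a high level, I bound $|\E_{z \sim \mu}[\omega^{Q(z)}]|$ by the $d$-th $\mu$-Gowers norm of $\omega^Q$, unfold that norm as an expectation of $\omega$ raised to the $d$-fold iterated discrete derivative of $Q$, exploit the disjointness of the $E_j$'s to factor this expectation as a product over $j \in [r]$, and finally bound each factor strictly away from $1$ using the hypothesis $a_j \neq 0$.

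Concretely, the first step is to invoke the $\mu$-Gowers-Cauchy-Schwarz inequality
\[
|\E_{z \sim \mu}[\omega^{Q(z)}]|^{2^d} \leq \|\omega^Q\|_{U^d(\mu)}^{2^d},
\]
established as part of the $\mu$-Gowers machinery. Next, I unfold the definition of $\|\cdot\|_{U^d(\mu)}$: writing $D_y Q(z) := Q(z+y) - Q(z)$ for the discrete derivative, the right-hand side becomes
\[
\E_{x,\, y^{(1)}, \ldots, y^{(d)} \sim \mu}\bigl[\omega^{D_{y^{(d)}} \cdots D_{y^{(1)}} Q(x)}\bigr].
\]
Since $\deg R < d$, iterated $d$-fold differentiation annihilates $R$; and for each leading monomial, a direct multilinear calculation (generalizing $D_{y^{(2)}} D_{y^{(1)}}(Z_1 Z_2) = y^{(1)}_1 y^{(2)}_2 + y^{(2)}_1 y^{(1)}_2$) yields
\[
D_{y^{(d)}} \cdots D_{y^{(1)}}\Bigl(\prod_{i \in E_j} Z_i\Bigr) = \per\bigl(Y^{(j)}\bigr),
\]
where $Y^{(j)}$ is the $d \times d$ matrix indexed by $[d] \times E_j$ with entries $Y^{(j)}_{k,i} = y^{(k)}_i$. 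In particular the iterated derivative is independent of $x$, so $x$ drops out; and since the $E_j$ are pairwise disjoint, the matrices $Y^{(j)}$ are built from disjoint sets of independent entries of $y^{(1)}, \ldots, y^{(d)}$, giving
\[
\|\omega^Q\|_{U^d(\mu)}^{2^d} = \prod_{j=1}^r \E\bigl[\omega^{a_j \per(Y^{(j)})}\bigr].
\]

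The third step is to bound each factor by some $\rho = \rho(q,p,d) < 1$. Under the $p$-biased measure on $\{0,1\}^{d \times d}$, the event $Y^{(j)} = \mathbf{0}$ (which forces $\per = 0$) has probability $(1-p)^{d^2}$, and the event $Y^{(j)} = I$ (which forces $\per = 1$) has probability $p^d (1-p)^{d(d-1)}$. Thus $a_j \per(Y^{(j)}) \in \Z_q$ takes the two distinct values $0$ and $a_j$ (distinct because $a_j \neq 0$) each with probability bounded below by a positive constant depending only on $p$ and $d$. A routine character-sum bound then gives $|\E[\omega^{a_j \per(Y^{(j)})}]| \leq \rho$ for some $\rho = \rho(q,p,d) < 1$. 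Multiplying over the $r$ blocks and taking a $2^d$-th root yields $|\E_\mu[\omega^{Q}]| \leq \rho^{r/2^d} = 2^{-\Omega_{q,p,d}(r)}$, as required.

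The main obstacle, and the reason for developing the $\mu$-Gowers norm groundwork in advance, is the very first step. The ordinary Gowers-Cauchy-Schwarz inequality relies on translation-invariance of the uniform measure on a group, which fails here: for $x, x' \sim \mu$, the substitution $y := x' - x$ does not produce $y \sim \mu$ (e.g.\ $y$ can take negative values not in the support of $\mu$). The fix, carried out earlier in Section~\ref{asec:poly}, is to define $\|\cdot\|_{U^d(\mu)}$ via an appropriate coupling of $2^d$ ``cube corners,'' all drawn from $\mu$, and to prove the Cauchy-Schwarz inequality by a careful induction on $d$ using an inner Cauchy-Schwarz along each direction. Once that foundational inequality is in hand, the remaining computation---the derivative identity, the block-wise factorization via disjointness, and the permanent-based witnesses $Y^{(j)} \in \{\mathbf{0}, I\}$---proceeds cleanly as outlined.
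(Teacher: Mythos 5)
Your proposal is correct in substance and follows essentially the same route as the paper: the paper proves Lemma~\ref{lem:VW} by bounding $|\E[\omega^{Q(\mathbf z)}]|$ by the $(d,\mu)$-Gowers norm (Lemma~\ref{gowerslemma4}), discarding the low-degree part $R$ (Lemma~\ref{gowerslemma3}), factoring over the disjoint blocks via tensor-multiplicativity of the norm (Lemma~\ref{gowerslemma2}), and bounding each block factor strictly away from $1$ (Lemma~\ref{gowerslemma1}, via Theorem~\ref{thm:supergowers}); your explicit iterated-derivative/permanent computation is an inlined version of exactly these steps.

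One caveat: once you invoke the coupled definition of the $\mu$-Gowers norm (as you must, for the Cauchy--Schwarz step), the unfolded expectation is over $\mu^{(d)}$, under which the directions $y^{(1)},\ldots,y^{(d)}$ are neither distributed according to $\mu$ nor independent of $x$; consequently the entries of $Y^{(j)}$ are not independent $p$-biased bits, and the probabilities $(1-p)^{d^2}$ and $p^d(1-p)^{d(d-1)}$ you assign to the events $Y^{(j)}=\mathbf 0$ and $Y^{(j)}=I$ are not the correct ones. This does not break the argument: since $\supp(\mu)\supseteq\{0,1\}^d$ on each block, both configurations lie in the support of the coupling and hence occur with probability bounded below by a constant depending only on $q,p,d$ (this is precisely how the paper argues in Lemma~\ref{gowerslemma1}), and the blockwise factorization is justified because the coupling of a product measure is the product of the coordinatewise couplings (the identity used to prove Lemma~\ref{gowerslemma2}), not merely because the $E_j$ are disjoint and $\mu$ is a product measure. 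With those two justifications substituted, your outline matches the paper's proof.
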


%\vspace{2in}

Given Lemma~\ref{lem:VW}, we may now prove Lemma~\ref{lem:polybias1}.
% a simple general criterion for a polynomial
%to be unbiased.
%Informally, it says that polynomials that have ``Generalized Inner Product" polynomials embedded in them are unbiased.

%\vspace{3mm}
%{\bf Lemma \ref{lem:polybias1} (restated)}
%{\it
%Let $q > 1$ be an integer and let $p \in (0,1)$.
%Let $\mathcal F \subseteq 2^{[m]}$. Let $d > 0$ be an integer.
%Let $Q(Z_1, \ldots, Z_m) \in \Z_q[Z_1, \ldots, Z_m]$ be a polynomial of the form
%$$\sum_{S \in \mathcal F} a_S \prod_{i \in S} Z_i + Q'(\mathbf Z),$$
%where $\deg(Q') < d$.
%Suppose there exist $\mathcal E = \{E_1, \ldots, E_r\} \subseteq \mathcal F$ such that:
%\begin{itemize}
%\item $|E_j| = d$ for each $j$,
%\item $a_{E_j} \neq 0$ for each $j$.
%\item $E_j \cap E_{j'} = \emptyset$ for each $j, j'$,
%\item For any $S \in \mathcal F\setminus\mathcal E$, $|S \cap (\cup_j E_j)| < d$.
%\end{itemize}
%Let $\mathbf z = (z_1, \ldots, z_m) \in \Z_q^m$ be the random variable
%where, independently for each $i$, we have $\Pr[z_i = 1] = p$ and $\Pr[z_i = 0] = 1-p$.
%Then,
%$$\left|\mathbb E \left[ \omega^{Q(\mathbf z)} \right]\right| \leq 2^{-\Omega_{q,p,d}(r)}.$$}
%\vspace{-2mm}
\begin{proofof}{Lemma~\ref{lem:polybias1}}
Let $U = \cup_{j=1}^{r}  E_j$.
Fix any $x \in \{0,1\}^{[m]\setminus U}$, and
let $Q_{x}(\mathbf Y) \in \Z_q[(Y_i)_{i \in U}]$ be the polynomial
$$ \sum_{S \in \mathcal F}  a_S \left(\prod_{j \in S \cap ([m]\setminus U)}x_j\right)\left(\prod_{i \in S \cap U} Y_i\right) + Q'(x, \mathbf Y)$$
so that $Q_x(y) = Q(x,y)$ for each $y \in \Z_q^U$.
Notice that the degree (in $\mathbf Y$) of the term corresponding to $S \in \mathcal F$ is $|S \cap U|$. By assumption, unless $S = E_j$ for some $j$,
we must have $|S \cap U| < d$.

Therefore the polynomial $Q_{x}(\mathbf Y)$ is of the form:
$$\sum_{j = 1}^r a_{E_i} \prod_{i \in E_i} Y_i + R(\mathbf Y),$$
where $\deg(R(\mathbf Y)) < d$. By Lemma~\ref{lem:VW},
$$\left|\mathbb E\left[\omega^{Q_x(\mathbf y)}\right]\right| < 2^{-\Omega_{q,p,d}(r)},$$
where $\mathbf y \in \{0,1\}^U$ with each $y_i = 1$ independently with
probability $p$.

As $Q_x(y) = Q(x,y)$, we get
$$\left|\mathbb E\left[\omega^{Q(\mathbf z^x)}\right]\right| < 2^{-\Omega_{q,p,d}(r)},$$
where $\mathbf z^x \in \Z_q^{n}$ is the random variable
$\mathbf z$ conditioned on the event $z_{j}= x_j$ for every
$j \in [m]\setminus U$.
Now, the distribution of $\mathbf z$ is a convex combination of the distributions of
$\mathbf z^x$ as $x$ varies over $\{0,1\}^{[m]\setminus U}$.
This allows us to deduce that
$$\left|\mathbb E \left[ \omega^{Q(\mathbf z)} \right]\right| \leq 2^{-\Omega_{q,p,d}(r)},$$
as desired.
\end{proofof}

\subsection{The $\mu$-Gowers norm}
\label{asec:gowers}
The proof of Lemma~\ref{lem:VW} will use a variant of
the Gowers norms.
Let $Q: \Z_q^m \rightarrow \Z_q$ be any function, and define
$f : \Z_q^m \rightarrow \mathbb C$ by $f(x) = \omega^{Q(x)}$.
The Gowers norm of $f$ is an analytic quantity that measures
how well $Q$ correlates with degree $d$ polynomials: 
the correlation of $Q$ with polynomials of degree $d-1$
under the uniform distribution is bounded from above by the
$d^{\rm{th}}$-Gowers norm of $f$. Thus to show that a certain $Q$ is uncorrelated
with all degree $d-1$ polynomials under the uniform distribution,
it suffices to bound the $d^{\rm{th}}$-Gowers norm of $f$.
In Lemma~\ref{lem:VW}, we wish to show that a certain $Q$ is uncorrelated
with all degree $d-1$ polynomials under a distribution $\mu$ that need not be
uniform. To this end, we define a variant of the Gowers norm,
which we call the $\mu$-Gowers norm, and show that if the $(d,\mu)^{\rm{th}}$-Gowers
norm of $f$ is small, then $Q$ is uncorrelated with all degree $d-1$
polynomials {\em under $\mu$}. We then complete the proof
of Lemma~\ref{lem:VW} by bounding the $(d,\mu)^{\rm{th}}$-Gowers
norm of the relevant $f$.

We first define the $\mu$-Gowers norm and develop some of its basic properties.

Let $H$ be an abelian group and let $\mu$ be a probability distribution on $H$.
For each $d \geq 0$, define a probability distribution $\mu^{(d)}$ on $H^{d+1}$ inductively by $\mu^{(0)} = \mu$, and, for $d \geq 1$, let $\mu^{(d)}(x, t_1, \ldots, t_d)$ equal
%$$\mu^{(d)}(x, t_1, \ldots, t_d) = \prod_{S \subseteq [d]} \mu(x + \sum_{j \in S} t_j).$$
% EXPLICIT FORMULA?
$$\frac{\mu^{(d-1)}(x, t_1, \ldots, t_{d-1}) \cdot \mu^{(d-1)}(x + t_d, t_1, \ldots, t_{d-1})}{\sum_{z \in H} \mu^{(d-1)}(z, t_1, \ldots, t_{d-1})}.$$
Equivalently, to sample $(x, t_1, \ldots, t_d)$ from $\mu^{(d)}$, first take a sample $(x, t_1, \ldots, t_{d-1})$
from $\mu^{(d-1)}$, then take a sample $(y, t'_1, \ldots, t'_{d-1})$ from $\mu^{(d-1)}$ conditioned on
$t'_i = t_i$ for each $i \in [d-1]$, and finally set $t_d = y-x$ (our sample
is then $(x, t_1, \ldots, t_{d-1}, t_d)$).
Notice that the distribution of a sample $(x, t_1, \ldots, t_d)$ from $\mu^{(d)}$
is such that for each $S \subseteq [d]$, the distribution of the point $x+ \sum_{i \in S} t_i$
is precisely $\mu$.

For a function $f: H \rightarrow \mathbb C$ and $\mathbf t \in H^d$, we define its
{\em $d^{\rm{th}}$-derivative in directions $\mathbf t$} to be the function
$D_{\mathbf t}f: H \rightarrow \mathbb C$ given by
$$D_{\mathbf t}f(x) = \prod_{S \subseteq [d]} f(x + \sum_{i \in S} t_i)^{\circ S},$$
where $a^{\circ S}$ equals the complex conjugate $\bar a$ if $|S|$ is odd, and $a^{\circ S}$ equals $a$ otherwise.
From the definition it immediately follows that $D_{(\mathbf t, u)}f (x) = D_{\mathbf t}f(x) \overline{D_{\mathbf t}f(x+u)}$
(where $(\mathbf t, u)$ denotes the vector $(t_1, \ldots, t_d, u) \in H^{d+1}$).

We now define the $\mu$-Gowers norm.
\begin{definition}[$\mu$-Gowers Norm]
If $\mu$ is a distribution on $H$, and $f : H \rightarrow \mathbb C$,
we define its $(d, \mu)$-Gowers norm by
$$\| f\|_{U^d, \mu} = \left|\mathbb E_{(x, \vec t) \sim \mu^{(d)}} \left[(D_{\vec t}f)(x)\right]\right|^{\frac{1}{2^d}}.$$
\end{definition}
When $H$ is of the form $\Z_q^m$, then the $(d,\mu)$-Gowers norm of a function is supposed
to estimate the correlation, under $\mu$, of that function with polynomials of degree $d-1$.
Intuitively, this happens because the Gowers norm of $f$ measures how often the $d^{\rm{th}}$
derivative of $f$ vanishes.

The next few lemmas enumerate some of the useful properties
that $\mu$-Gowers norms enjoy.

\begin{lemma}
\label{gowerslemma4}
Let $f : H \rightarrow \mathbb C$. Then,
$$\left| \mathbb E_{x \sim \mu} \left[ f(x)\right] \right| \leq \|f \|_{U^d, \mu}.$$
\end{lemma}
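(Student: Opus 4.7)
The plan is to deduce the lemma from the stronger \emph{monotonicity} statement
\[
\|f\|_{U^d,\mu} \;\le\; \|f\|_{U^{d+1},\mu} \qquad \text{for every } d \ge 1,
\]
together with the base case $\|f\|_{U^1,\mu} = |\mathbb{E}_{x \sim \mu}[f(x)]|$. The base case is immediate from the recursive definition of $\mu^{(d)}$: unfolding $\mu^{(1)}$ one sees that sampling $(x,t_1) \sim \mu^{(1)}$ is the same as sampling $x$ and $y := x + t_1$ independently from $\mu$, so $\mathbb{E}_{(x,t_1)\sim \mu^{(1)}}[f(x)\overline{f(x+t_1)}]$ factorises as $|\mathbb{E}_{x\sim\mu}[f(x)]|^2$, and taking a square root gives the claimed identity.

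For the monotonicity step I would use the ``equivalent'' sampling description of $\mu^{(d+1)}$ given right after its definition: draw $(x,\vec t) \sim \mu^{(d)}$; then, independently, draw $y$ from the law of the first coordinate of $\mu^{(d)}$ conditioned on the remaining coordinates equalling $\vec t$; finally set $t_{d+1} := y - x$. Combined with the observation $D_{(\vec t, u)}f(x) = D_{\vec t}f(x)\,\overline{D_{\vec t}f(x+u)}$ stated in the paper, this gives
\[
\|f\|_{U^{d+1},\mu}^{2^{d+1}}
\;=\;
\mathbb{E}_{\vec t}\!\left[\,\bigl|\mathbb{E}_{x \mid \vec t}\!\left[D_{\vec t}f(x)\right]\bigr|^{2}\,\right],
\]
where the outer expectation is over the marginal distribution of $\vec t$ under $\mu^{(d)}$ and the inner one is over the conditional distribution of $x$ given $\vec t$.

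Applying Jensen's inequality to the convex function $z \mapsto |z|^{2}$ (equivalently, one application of Cauchy--Schwarz) then yields
\[
\mathbb{E}_{\vec t}\!\left[\,\bigl|\mathbb{E}_{x \mid \vec t}\!\left[D_{\vec t}f(x)\right]\bigr|^{2}\,\right]
\;\ge\;
\bigl|\mathbb{E}_{(x,\vec t)\sim \mu^{(d)}}\!\left[D_{\vec t}f(x)\right]\bigr|^{2}
\;=\; \|f\|_{U^d,\mu}^{\,2 \cdot 2^d}
\;=\; \|f\|_{U^d,\mu}^{\,2^{d+1}}.
\]
Taking $2^{d+1}$-th roots closes the induction, and chaining the inequalities back to the base case proves the lemma.

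There is no serious obstacle to overcome here: the argument mirrors the classical monotonicity proof for the uniform-measure Gowers norm, and the recursive definition of $\mu^{(d)}$ has been set up precisely so that the same proof carries through. The one place where care is needed is verifying that the described sampling procedure for $\mu^{(d+1)}$ really is a ``convolution squaring'' of the conditional law of $x$ given $\vec t$, which amounts to checking that the formula $\mu^{(d+1)}(x,\vec t, t_{d+1}) \propto \mu^{(d)}(x,\vec t)\,\mu^{(d)}(x+t_{d+1},\vec t)$ holds; this is a direct consequence of the inductive definition and is what makes the Cauchy--Schwarz step produce exactly the right square of $\|f\|_{U^d,\mu}^{2^d}$.
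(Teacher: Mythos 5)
Your proposal is correct and follows essentially the same route as the paper: both prove the monotonicity $\|f\|_{U^d,\mu} \leq \|f\|_{U^{d+1},\mu}$ via one application of Cauchy--Schwarz/Jensen together with the sampling description of $\mu^{(d+1)}$, and then chain down to the base case (you anchor at $\|f\|_{U^1,\mu} = |\mathbb{E}_{x\sim\mu}[f(x)]|$, the paper at $\|f\|_{U^0,\mu}$, which is an immaterial difference). The identity $\mu^{(d+1)}(x,\vec t,t_{d+1}) \propto \mu^{(d)}(x,\vec t)\,\mu^{(d)}(x+t_{d+1},\vec t)$ that you flag as needing care is exactly the paper's inductive definition of $\mu^{(d)}$, so there is no gap.
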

\begin{proof}
We prove that for every $d$,
$\| f \|_{U^d, \mu} \leq \| f \|_{U^{d+1}, \mu}$.
The lemma follows by noting that $\| f \|_{U^0, \mu} = \left| \mathbb E_{x \sim \mu} \left[ f(x) \right] \right|$.

The proof proceeds (following Gowers~\cite{gowers} and Green-Tao~\cite{greentao}) via the Cauchy-Schwarz inequality,
\begin{align*}
\| f \|_{U^d, \mu}^{2^{d+1}} &= \left|\mathbb E_{(x, \mathbf t) \sim \mu^{(d)}}\left[ D_{\mathbf t}f (x) \right] \right|^2\\
&\leq \mathbb E_{\mathbf t}\left[\left|\mathbb E_{x}\left[D_{\mathbf t}f(x) \right]   \right|^2 \right]\mbox{\quad\quad\quad\quad\quad\quad\quad by Cauchy-Schwarz}\\
&= \mathbb E_{\mathbf t} \mathbb E_{x, y} \left[D_{\mathbf t}f(x) \overline{D_{\mathbf t}f(y)}  \right] \mbox{\quad\quad \quad\quad\quad where $y$ is an independent sample of $x$ given $\mathbf t$ }.\\
&= \mathbb E_{x, \mathbf t, u}\left[ D_{\mathbf t}f(x) \overline{D_{\mathbf t}f(x + u)}\right]\mbox{\quad\quad\quad\quad where $u = y - x$}\\
&= \mathbb E_{(x, \mathbf t, u) \sim \mu^{(d+1)}}\left[ D_{\mathbf t}f(x) \overline{D_{\mathbf t}f(x + u)}\right]\mbox{\quad\quad by definition of $\mu^{(d+1)}$}\\
&= \mathbb E_{(x, \mathbf t, u) \sim \mu^{(d+1)}}\left[ D_{(\mathbf t, u)}f(x) \right]\\
&= \|f\|_{U^{d+1}, \mu}^{2^{d+1}}.
\end{align*}
This proves the lemma.
\end{proof}

\begin{definition}
For each $i \in [r]$, let $g_i:H \rightarrow \mathbb C$.
We define $(\bigotimes_{i=1}^r g_i): H^r \rightarrow \mathbb C$ by
$$\left(\bigotimes_{i=1}^r g_i\right)(x_1, \ldots, x_r) = \prod_{i=1}^r g_i(x_i).$$
For each $i \in [r]$, let $\mu_i$ be a probability measure on $H$.
We define the probability measure $\bigotimes_{i=1}^r \mu_i$ on $H^r$ by
$$\left(\bigotimes_{i=1}^r \mu_i\right)(x_1, \ldots, x_r) = \prod_{i=1}^r \mu_i(x_i).$$
\end{definition}

\begin{lemma}
\label{gowerslemma2}
$\| \bigotimes_{i=1}^r g_i \|_{U^d, \bigotimes_{i=1}^r \mu_i} = \prod_{i=1}^r\| g_i \|_{U^d, \mu_i}$.
\end{lemma}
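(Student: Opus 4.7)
The plan is to show that both the sampling distribution $\mu^{(d)}$ and the derivative operator $D_{\mathbf t}$ interact well with the tensor product, so that the entire expectation defining the norm factors across the coordinates $i \in [r]$.

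First I would prove, by induction on $d$, that $\bigl(\bigotimes_{i=1}^r \mu_i\bigr)^{(d)} = \bigotimes_{i=1}^r \mu_i^{(d)}$ under the natural identification of $(H^r)^{d+1}$ with $(H^{d+1})^r$. The base case $d=0$ is immediate from the definition $\mu^{(0)} = \mu$. For the inductive step, recall the alternative sampling description: to draw $(\mathbf x,\mathbf t_1,\ldots,\mathbf t_d)$ from $\nu^{(d)}$, one first draws $(\mathbf x,\mathbf t_1,\ldots,\mathbf t_{d-1})$ from $\nu^{(d-1)}$, then draws an independent $(\mathbf y,\mathbf t_1,\ldots,\mathbf t_{d-1})$ from $\nu^{(d-1)}$ conditioned on the same $\mathbf t_1,\ldots,\mathbf t_{d-1}$, and sets $\mathbf t_d = \mathbf y - \mathbf x$. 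For $\nu = \bigotimes_i \mu_i$, the inductive hypothesis says both of these draws factor across coordinates $i$, so the conditioning on $\mathbf t_1,\ldots,\mathbf t_{d-1}$ decouples into independent conditionings on each $(t_1)_i,\ldots,(t_{d-1})_i$, giving independent draws from $\mu_i^{(d)}$ at each coordinate.

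Next I would verify that the derivative of a tensor product function factors as
\[
D_{\mathbf t}\Bigl(\bigotimes_{i=1}^r g_i\Bigr)(\mathbf x) \;=\; \prod_{i=1}^r D_{\mathbf t^{(i)}} g_i(x_i),
\]
where $\mathbf t^{(i)} = ((t_1)_i,\ldots,(t_d)_i)$. This is immediate from the definition $D_{\mathbf t}f(x) = \prod_{S \subseteq [d]} f(x + \sum_{j\in S} t_j)^{\circ S}$ together with the fact that complex conjugation distributes over products, so $(ab)^{\circ S} = a^{\circ S}b^{\circ S}$, and the inner product values of $\bigotimes_i g_i$ themselves factor coordinate-wise.

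Combining the two facts, the expectation defining the $(d,\bigotimes_i \mu_i)$-Gowers norm becomes a product of independent expectations:
\[
\mathbb E_{(\mathbf x,\mathbf t)\sim(\bigotimes_i\mu_i)^{(d)}}\!\Bigl[D_{\mathbf t}\bigl(\bigotimes_i g_i\bigr)(\mathbf x)\Bigr] \;=\; \prod_{i=1}^r \mathbb E_{(x_i,\mathbf t^{(i)})\sim \mu_i^{(d)}}\!\bigl[D_{\mathbf t^{(i)}} g_i(x_i)\bigr].
\]
Taking absolute values (which is multiplicative) and raising to the $1/2^d$ power yields the claimed identity. I do not anticipate a main obstacle here: the only thing that needs care is the bookkeeping in the inductive step for $\mu^{(d)}$, to make sure the conditional sampling description really does decouple across the $r$ coordinates.
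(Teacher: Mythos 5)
Your proposal is correct and follows essentially the same route as the paper: the paper's proof is precisely ``expand both sides and use $\left(\bigotimes_{i=1}^r\mu_i\right)^{(d)} = \bigotimes_{i=1}^r\left(\mu_i^{(d)}\right)$,'' and you have simply written out the two factorization facts (the tensor identity for $\mu^{(d)}$, proved by induction via the conditional sampling description, and the coordinatewise factorization of $D_{\mathbf t}$) that the paper leaves implicit. No gaps; the extra bookkeeping you flag in the inductive step is handled correctly by the decoupling of the conditioning across coordinates.
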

\begin{proof}
Follows by expanding both sides and using the fact
that $\left(\bigotimes_{i=1}^r\mu_i\right)^{(d)} = \bigotimes_{i=1}^r\left(\mu_i^{(d)}\right).$

%Let $\mu = \bigotimes_{i=1}^r \mu_i$ and $g = \bigotimes_{i=1}^r g_i$.
%Indeed, (LHS)$^{2^d}$ equals
%\begin{align*}
%\mathbb E_{x, \mathbf t) \sim \mu^{(d)}} [ (D_{\mathbf t}g) (x)] &= \mathbb E_{((x_i, t_i))_{i=1}^r \sim \bigotimes_{i=1}^r\mu_i^{(d)}}[D_{\mathbf t}g)(x)]\\
%&= \mathbb E_{((x_i, t_i))_{i=1}^r \sim \bigotimes_{i=1}^r\mu_i^{(d)}}[\prod_{i=1}^r D_{\mathbf t_i}g_i)(x_i)]\\
%&= \prod_{i=1}^r
%\end{align*}
\end{proof}

\begin{lemma}
\label{gowerslemma3}
Let $q > 1$ be an integer and let $\omega \in \mathbb C$ be a primitive
$q^{\mathrm{th}}$-root of unity. For all $f: \Z_q^n \rightarrow \mathbb C$, all probability
measures $\mu$ on $\Z_q^n$, and
all polynomials $h \in \Z_q[Y_1, \ldots, Y_n]$ of degree $< d$,
$$\|f \omega^h \|_{U^d, \mu} = \| f \|_{U^d, \mu}.$$
\end{lemma}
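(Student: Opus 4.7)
The plan is to show the stronger pointwise statement that $D_{\mathbf{t}}(f\omega^h)(x) = D_{\mathbf{t}} f(x)$ for every $x$ and every $\mathbf{t} \in (\Z_q^n)^d$, from which the equality of norms follows immediately upon taking expectations under $\mu^{(d)}$.

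First I would observe that the derivative operator is multiplicative: $D_{\mathbf{t}}(fg)(x) = D_{\mathbf{t}}f(x) \cdot D_{\mathbf{t}}g(x)$. This is direct from the definition, using the fact that $(ab)^{\circ S} = a^{\circ S} b^{\circ S}$ (since complex conjugation respects products). Hence it suffices to show $D_{\mathbf{t}}(\omega^h)(x) \equiv 1$.

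Next I would unwind the definition of the $d$-th derivative of $\omega^h$. Since $\overline{\omega^a} = \omega^{-a}$, we have $(\omega^a)^{\circ S} = \omega^{(-1)^{|S|} a}$, and therefore
\[
D_{\mathbf{t}}(\omega^h)(x) = \omega^{\sum_{S \subseteq [d]} (-1)^{|S|} h(x + \sum_{i \in S} t_i)}.
\]
The exponent is (up to the global sign $(-1)^d$) the iterated finite difference $\Delta_{t_1} \Delta_{t_2} \cdots \Delta_{t_d} h(x)$, where $\Delta_t h(x) := h(x+t) - h(x)$. Each application of $\Delta_{t_i}$ strictly decreases the degree of a nonzero polynomial by at least one, so iterating $d$ times on a polynomial of degree strictly less than $d$ yields the zero polynomial. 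Consequently the exponent vanishes identically and $D_{\mathbf{t}}(\omega^h)(x) = 1$ for all $x, \mathbf{t}$.

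Combining the two observations gives $D_{\mathbf{t}}(f \omega^h)(x) = D_{\mathbf{t}} f(x)$ pointwise, so $\mathbb{E}_{(x,\mathbf{t}) \sim \mu^{(d)}}[D_{\mathbf{t}}(f\omega^h)(x)] = \mathbb{E}_{(x,\mathbf{t}) \sim \mu^{(d)}}[D_{\mathbf{t}}f(x)]$, which yields the lemma after taking the $2^d$-th root of the modulus. The only subtle point is verifying that the finite difference identity $\sum_{S} (-1)^{|S|} h(x + \sum_{i \in S} t_i) = (-1)^d \Delta_{t_1}\cdots\Delta_{t_d}h(x)$ holds and that $\Delta_{t_1}\cdots \Delta_{t_d}$ annihilates every polynomial of degree $< d$ over $\Z_q$; both are standard and reduce to the case of a single monomial by linearity, which is handled by expanding and using the binomial-like vanishing $\sum_{S \subseteq [d]}(-1)^{|S|} \prod_{i \in S}(\cdot) = 0$ whenever fewer than $d$ factors are available.
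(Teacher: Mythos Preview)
Your proposal is correct and takes exactly the same approach as the paper: the paper's entire proof is the one-line assertion that $(D_{\mathbf t}f) = (D_{\mathbf t}(f\cdot \omega^h))$, and you have supplied the standard details behind that assertion (multiplicativity of $D_{\mathbf t}$ together with the vanishing of the $d$-fold finite difference of a degree~$<d$ polynomial).
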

The above lemma follows from the fact that $(D_{\mathbf t}f) = (D_{\mathbf t} (f\cdot \omega^h))$.

\begin{lemma}
\label{gowerslemma1}
Let $a \in \Z_q\setminus \{0\}$ and let $g : \Z_q^d \rightarrow \mathbb C$  be given by
$g(y) = \omega^{a\prod_{i=1}^d y_i}$.
Let $\mu$ be a probability distribution on $\Z_q^d$ with $\supp(\mu) \supseteq \{0,1\}^d$. Then $\| g \|_{U^d, \mu} < 1 - \epsilon$, where $\epsilon > 0$ depends only on $q, d$ and $\mu$.
\end{lemma}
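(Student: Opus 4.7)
The plan is to compute $D_{\vec t} g(x)$ explicitly, observe that because $g$ is a character of a multilinear monomial the result depends only on $\vec t$ (not on $x$), and then exploit the hypothesis $\{0,1\}^d \subseteq \supp(\mu)$ to exhibit two configurations of $\vec t$ lying in $\supp(\mu^{(d)})$ at which $D_{\vec t} g$ takes different values on the unit circle.

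For the derivative, by definition and since $g(y)^{\circ S}$ contributes $(-1)^{|S|}$ to the exponent of $\omega$,
\[
D_{\vec t} g(x) \;=\; \omega^{\,a \sum_{S \subseteq [d]} (-1)^{|S|} \prod_{i=1}^d \bigl(x_i + \sum_{j \in S}(t_j)_i\bigr)}.
\]
Expanding each $\prod_i(x_i + \sum_{j\in S}(t_j)_i)$ over subsets $A \subseteq [d]$ (choosing at each coordinate $i$ between $x_i$ and $\sum_{j \in S}(t_j)_i$), swapping sums, and applying the identity $\sum_{S \supseteq T}(-1)^{|S|} = 0$ unless $T=[d]$, one finds that only the $A=[d]$ term survives, yielding
\[
D_{\vec t} g(x) \;=\; \omega^{(-1)^d\, a\,\per(T)}, \qquad T_{ij} := (t_j)_i;
\]
this is the familiar fact that the iterated finite difference of the monomial $y_1\cdots y_d$ in directions $\vec t$ equals $\per(T)$ up to sign. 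In particular $D_{\vec t} g(x)$ is independent of $x$, so $\|g\|_{U^d,\mu}^{2^d} = \bigl|\E_{\vec t}[\omega^{(-1)^d a \per(T)}]\bigr|$ with $\vec t$ drawn from the marginal of $\mu^{(d)}$ on its last $d$ coordinates.

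A straightforward induction on $d$ using the recursive definition of $\mu^{(d)}$ shows that whenever every ``corner'' $x + \sum_{j \in S} t_j$ ($S \subseteq [d]$) of a cube lies in $\supp(\mu)$, the tuple $(x, t_1,\ldots,t_d)$ lies in $\supp(\mu^{(d)})$. Since $\{0,1\}^d \subseteq \supp(\mu)$, two such cubes qualify: (i) the degenerate cube $x=0,\ t_j=0$ for all $j$, whose corners are all $\vec 0$ and for which $T=0$, giving $\per(T)=0$; and (ii) the standard basis cube $x=0,\ t_j=e_j$, whose corners are exactly the $\{0,1\}$-indicators of subsets of $[d]$ and for which $T=I$, giving $\per(T)=1$. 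Both cubes receive $\mu^{(d)}$-mass at least some $\delta = \delta(q,d,\mu) > 0$.

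Since $a \not\equiv 0 \pmod q$ and $\omega$ is a primitive $q^{\mathrm{th}}$-root of unity, the values $\omega^{(-1)^d a \cdot 0} = 1$ and $\omega^{(-1)^d a \cdot 1}$ are distinct points on the unit circle. A unit-circle-valued random variable that takes two distinct values, each with probability at least $\delta$, has expectation of modulus at most $1-\epsilon'$ for some $\epsilon' = \epsilon'(\delta,q) > 0$ by the strict triangle inequality. This yields $\|g\|_{U^d,\mu} < 1-\epsilon$ for a constant $\epsilon = \epsilon(q,d,\mu) > 0$, as required. The most delicate step is the derivative computation together with the inclusion--exclusion cancellation that produces the permanent; the support-propagation induction and the final triangle-inequality bound are routine.
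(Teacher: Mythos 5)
Your proof is correct and follows essentially the same strategy as the paper: exhibit two tuples in $\supp(\mu^{(d)})$ on which $D_{\vec t}g$ takes the distinct unit-circle values $1$ and $\omega^{\pm a}$ (the paper uses the basis-direction cube and the event that some $t_j=0$, you use the basis cube and the all-zero cube), then conclude by the strict triangle inequality. Your explicit permanent formula for $D_{\vec t}g$ and the corner-support induction are just more detailed versions of computations the paper asserts directly, not a different method.
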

\begin{proof}
As $\{0,1\} \subseteq \supp(\mu)$, the distribution $\mu^{(d)}$ give some positive probability $\delta > 0$
to the point $(x_0, \mathbf e) = ( x_0, e_1, \ldots, e_d)$, where $x_0 = 0 \in \Z_q^d$, and $e_i \in \Z_q^d$ is the vector with $1$ in the $i$th coordinate
and $0$ in all other coordinates (and $\delta$ depends only on $q, d$ and $\mu$). Then 
$(D_{\mathbf e}g) (x_0) = \prod_{S \subseteq [d]} g(\sum_{i\in S} e_i)^{\circ S} = \omega^{\pm a} \neq 1$ (since whenever $S \neq [d]$,
we have $g(\sum_{i\in S} e_i) = 1$). On the other hand, whenever 
$\mathbf t \in (\Z_q^d)^d$ has some coordinate equal to $0$, which also happens with positive probability 
depending only on $d, \mu$ and $q$, we have $(D_{\mathbf t}g(x)) = 1$. Thus in the
expression
$$\| g\|_{U^d, \mu} = \left|\mathbb E_{(x, \vec t) \sim \mu^{(d)}} \left[(D_{\vec t}f)(x)\right]\right|^{\frac{1}{2^d}},$$
since every term in the expectation has absolute value at most $1$, and we just found two terms with
positive probability with values $1$ and $\omega^{\pm a} \neq 1$, we conclude that $\|g\|_{U^d, \mu} < 1-\epsilon$
for some $\epsilon$ depending only on $q, \mu$ and $d$.
\end{proof}

We now put together the above ingredients.
\begin{theorem}
\label{thm:supergowers}
Let $f: (\Z_q^d)^r \rightarrow \mathbb C$ be given by
$$f(x_1, \ldots, x_r) = \omega^{\sum_{j = 1}^r a_j \prod_{i=1}^d x_{ij}},$$
where $a_j \in \Z_q \setminus \{0\}$ for all $j \in [r]$.
Let $\mu$ be a probability distribution on $\Z_q^d$ with $\supp(\mu)
\supseteq \{0,1\}^d$.
Then for all polynomials $h \in \Z_q[(Y_{ij})_{i \in [d], j \in [r]}]$, with
$\deg(h) < d$, we have
$$\left|\mathbb E_{x \sim \mu^{\otimes r}}\left[ f(x) \omega^{h(x)} \right]\right| \leq c^{r},$$
where $c < 1$ depends only on $q, d$ and $\mu$.
\end{theorem}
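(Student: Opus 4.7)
The plan is to combine the four $\mu$-Gowers norm lemmas developed above in a direct chain. First, I would apply Lemma~\ref{gowerslemma4} to the function $f\cdot\omega^h$ under the measure $\mu^{\otimes r}$ on $(\Z_q^d)^r$, obtaining
$$\left|\mathbb E_{x \sim \mu^{\otimes r}}\bigl[f(x)\omega^{h(x)}\bigr]\right| \;\leq\; \|f\omega^h\|_{U^d,\,\mu^{\otimes r}}.$$
Since $\deg(h) < d$, Lemma~\ref{gowerslemma3} (with the ambient group taken to be $\Z_q^{dr}$ and the polynomial being $h$) removes the $\omega^h$ twist and reduces the task to bounding $\|f\|_{U^d,\,\mu^{\otimes r}}$.

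The next step exploits the tensor-product structure built into $f$. Writing $g_j : \Z_q^d \to \mathbb C$ for the single-block function $g_j(y) = \omega^{a_j \prod_{i=1}^d y_i}$, we have $f = \bigotimes_{j=1}^r g_j$, and of course $\mu^{\otimes r} = \bigotimes_{j=1}^r \mu$. Hence Lemma~\ref{gowerslemma2} factorizes the norm as
$$\|f\|_{U^d,\,\mu^{\otimes r}} \;=\; \prod_{j=1}^r \|g_j\|_{U^d,\,\mu}.$$
Finally, each individual factor is bounded using Lemma~\ref{gowerslemma1}: since $a_j \in \Z_q\setminus\{0\}$ and $\supp(\mu) \supseteq \{0,1\}^d$, that lemma supplies some $\epsilon_j > 0$ depending only on $q$, $d$, $\mu$ and $a_j$ with $\|g_j\|_{U^d,\,\mu} \leq 1 - \epsilon_j$. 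Taking $\epsilon$ to be the minimum of $\epsilon_a$ over the finitely many nonzero $a \in \Z_q$ (so that $\epsilon$ depends only on $q$, $d$, $\mu$), and setting $c = 1 - \epsilon < 1$, the product bound $c^r$ follows.

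There is no real obstacle here: the four preceding lemmas were designed precisely to chain in this way, and the only point requiring a moment's thought is that the constant from Lemma~\ref{gowerslemma1} can be made uniform in $j$, which is immediate because $a_j$ ranges over the finite set $\Z_q\setminus\{0\}$. In particular, the monotonicity-in-$d$ step (Lemma~\ref{gowerslemma4}) and the invariance-under-low-degree-phase step (Lemma~\ref{gowerslemma3}) are used exactly once each, and the tensorization (Lemma~\ref{gowerslemma2}) is what converts the per-block gap of Lemma~\ref{gowerslemma1} into an exponential decay in $r$.
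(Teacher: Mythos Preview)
Your proposal is correct and follows essentially the same chain of lemmas as the paper's proof (Lemma~\ref{gowerslemma4} $\to$ Lemma~\ref{gowerslemma3} $\to$ Lemma~\ref{gowerslemma2} $\to$ Lemma~\ref{gowerslemma1}); the paper merely applies them in the reverse order. One small remark: the uniformization step you take over $a_j$ is unnecessary, since Lemma~\ref{gowerslemma1} already asserts that $\epsilon$ depends only on $q$, $d$, and $\mu$ (not on $a$), so a single $c = 1-\epsilon$ works for all blocks directly.
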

\begin{proof}
Let $g_j : \Z_q^d \rightarrow \mathbb C$ be given by $g_j(y) = \omega^{a_j \prod_{i=1}^d y_i}$
(as in in Lemma~\ref{gowerslemma1}), and
take $c = 1 - \epsilon$ from that Lemma.
Notice that $f = \otimes_{j = 1}^r g_j$. Therefore by Lemma~\ref{gowerslemma2},
we have $$\| f \|_{U^d, \mu^{\otimes r}} = \prod_{j = 1}^r \|g_j \|_{U^d, \mu} \leq c^r.$$
As the degree of $h$ is at most $d-1$, Lemma~\ref{gowerslemma3}
implies that $$\| f\omega^h \|_{U^d, \mu^{\otimes r}} = \|f\|_{U^d, \mu^{\otimes r}} \leq c^r.$$
Lemma~\ref{gowerslemma4} now implies that
$$ \left|\mathbb E_{x \sim \mu^{\otimes r}}\left[f(x) \omega^{h(x)} \right]\right| \leq c^r,$$
as desired.
\end{proof}

We can now complete the proof of Lemma~\ref{lem:VW}.

\begin{proofof}{Lemma~\ref{lem:VW}}
By fixing the variables $Z_i$ for $i \not\in \cup_{j} E_j$, and then averaging over
all such fixings, it suffices to consider the case $[m] = \cup_{j} E_j$.
Then the polynomial $Q(Z_1, \ldots, Z_m) = \left(\sum_{j = 1}^{r} a_j \prod_{i \in E_j} Z_i\right) + R(Z)$
can be rewritten in the form (after renaming the variables):
$$\sum_{j=1}^r a_j \prod_{i = 1}^d X_{ij} + h(\mathbf X),$$
where $\deg(h) < d$. Let $\mu$ be the $p$-biased probability measure on $\{0,1\}^d \subseteq \Z_q^d$.
Theorem~\ref{thm:supergowers} now implies that
$$\left|\mathbb E_{x \sim \mu^{\otimes r}} \left[ \omega^{Q(x)} \right] \right| \leq 2^{-\Omega_{q,p,d}(r)},$$
as desired.
\end{proofof}

\section{Proof of Theorem~\ref{thm:Main1}}
\label{sec:quantstatement}

The proof of Theorem~\ref{thm:Main1}
will be via a more general theorem amenable to inductive proof,
Theorem~\ref{thm:foptopoly}.
Just as Theorem~\ref{thm:Main1} states that for almost all $G \in G(n,p)$,
the truth of any $\FOM$ sentence on $G$ is
determined by subgraph frequencies, $\freq^c_G$, Theorem~\ref{thm:foptopoly}
states that for almost all graphs $G \in G(n,p)$, for any
$w_1,\ldots, w_k \in V_G$ the truth of any $\FOM$ {\em formula} $\varphi(w_1, \ldots, w_k)$ on $G$ is
determined by the adjacency and equality information about $w_1, \ldots, w_k$ 
(which we will call the {\em type}), and the {\em labelled subgraph
frequencies at $\mathbf w$}. In the next subsection, we formalize
these notions.

\subsection{Labelled graphs and labelled subgraph frequencies}

Let $I$ be a finite set. We begin with some preliminaries on $I$-labelled graphs.

\begin{definition}[$I$-labelled graphs]
An {\em $I$-labelled graph} is a graph $F = (V_F, E_F)$ where some vertices
are labelled by elements of $I$, such that {\it (a)}
for each $i \in I$, there is exactly one vertex labelled $i$. We denote this
vertex $F(i)$, and {\it (b)} the graph induced on the set of labelled vertices is an independent set.
We denote the set of labelled vertices of $F$ by $\mathcal L(F)$.
\end{definition}

\begin{definition}[Homomorphisms and Copies]
A {\em homomorphism} from an $I$-labelled graph $F$ to a pair $(G,\mathbf w)$,
where $G$ is a graph and $\mathbf w \in V_G^I$, is a homomorphism $\chi\in\Hom(F,G)$
such that for each $i\in I$, $\chi$ maps $F(i)$ to $w_i$.
A homomorphism from $F$ to $(G, \mathbf w)$ is called {\em injective} if for any distinct
$v,w \in V_F$, such that $\{v,w\} \not\subseteq \mathcal L(F)$, we have
$\chi(v) \neq \chi(w)$. A {\em copy} of $F$ in $(G,\mathbf w)$ is a set $E \subseteq E_G$
such that there exists an injective homomorphism $\chi$ from $F$ to $(G,\mathbf w)$ such that
$E = \chi(E_F) := \{(\chi(v), \chi(w)) \mid (v, w) \in E_F\}$. An {\em automorphism} of $F$
is an injective homomorphism from $F$ to $(F, \mathbf w)$, where $w_i = F(i)$ for each $i \in I$.
\end{definition}

\begin{definition}[$\Hom, \Inj, \Cop, \Aut$ for labelled graphs]
Let $F$ be an $I$-labelled graph, and $G$ be any graph. Let
$\mathbf w \in V_G^I$. We define $\Hom(F,(G,\mathbf w))$
to be the set of homomorphisms from $F$ to $(G, \mathbf w)$.
We define $\Inj(F, (G,\mathbf w))$ to be the set of injective homomorphisms
from $F$ to $(G,\mathbf w)$. We define $\Cop(F, (G, \mathbf w))$ to be the set of
copies of $F$ in $(G, \mathbf w)$. We define $\Aut(F)$ to be the set of automorphisms
of $F$.
We let $[F](G, \mathbf w)$ (respectively $\langle F \rangle(G, \mathbf w)$, $\aut(F)$)
be the cardinality of $\Inj(F,(G,\mathbf w))$ (respectively $\Cop(F,(G,\mathbf w))$, $\Aut(F)$).

Finally, let $[F]_q(G,\mathbf w) = [F](G, \mathbf w) \mod q$ and $\langle F \rangle_q(G, \mathbf w) = \langle F \rangle(G, \mathbf w) \mod q$.
\end{definition}

\begin{definition}[Label-connected]
For $F$ an $I$-labelled graph, we say $F$ is {\em \konn} if
$F \setminus \mathcal L(F)$ is connected.
Define $\Conn_I^{t}$ to be the set of all $I$-labelled {\konn} graphs with at most
$t$ unlabelled vertices. For $i \in I$, we say an $I$-labelled graph $F$ is
{\bf dependent on label $i$} if $F(i)$ is not an isolated vertex.
\end{definition}

\begin{definition}[Partitions]
If $I$ is a set, an {\em $I$-partition} is a set of subsets
of $I$ that are pairwise disjoint, and whose union is $I$.
If $\Pi$ is an $I$ partition, then for $i \in I$ we denote the unique
element of $\Pi$ containing $i$ by $\Pi(i)$. If $V$ is any set and $\mathbf w \in V^I$,
we say $\mathbf w$ respects $\Pi$ if for all $i, i' \in I$, 
$w_i = w_{i'}$ iff $\Pi(i) = \Pi(i')$.
\end{definition}
The collection of all partitions of $I$ is denoted
$\Partitions(I)$.

If $I \subseteq J$, $\Pi \in \Partitions(I)$ and $\Pi' \in \Partitions(J)$,
we say $\Pi'$ extends $\Pi$ if for all $i_1, i_2 \in I$, $\Pi(i_1) = \Pi(i_2)$ if
and only if $\Pi'(i_1) = \Pi'(i_2)$.

\begin{definition}[Types]
An $I$-$\type$ $\tau$ is a pair $(\Pi_\tau, E_\tau)$ where
$\Pi_\tau \in \Partitions(I)$ and $E_\tau \subseteq {\Pi_{\tau}\choose 2}$.
For a graph $G$ and $\mathbf w \in V_G^I$, we define
the $\type$ of $\mathbf w$ in $G$, denoted $\type_G(\mathbf w)$, to be
the $I$-$\type$ $\tau$, where $\mathbf w$ respects $\Pi_{\tau}$,
and for all $i, i' \in I$, $\{\Pi_{\tau}(i), \Pi_{\tau}(i')\} \in E_{\tau}$ if and only if $w_i$ and $w_{i'}$ are adjacent in $G$.
\end{definition}
The collection of all $I$-$\type$s is denoted $\Types(I)$.

If $I \subseteq J$, and $\tau \in \Types(I)$ and $\tau' \in \Types(J)$, we
say $\tau'$ {\em extends} $\tau$ if $\Pi_{\tau'}$ extends $\Pi_{\tau}$ and
for each $i_1, i_2 \in I$,
$\{\Pi_\tau(i_1), \Pi_\tau(i_2)\} \in E_{\tau}$ if and only
if $\{\Pi_{\tau'}(i_1), \Pi_{\tau'}(i_2)\} \in E_{\tau'}$.

\begin{definition}[Labelled subgraph frequency vector]
Let $G$ be a graph and $I$ be any set.
Let $\mathbf w \in V_G^I$. We define the {\em labelled
subgraph frequency vector at $\mathbf w$},
$\freq_{G}^a(\mathbf w) \in \Z_q^{\Conn_I^{a}}$, to
be the vector such that for each $F \in \Conn_I^{a}$,
$$(\freq_G^a(\mathbf w))_F = [F]_q(G, \mathbf w).$$
\end{definition}

\begin{remark} We will often deal with $[k]$-labelled graphs. By abuse of notation we will refer to them as $k$-labelled graphs. If $\mathbf w \in V^{[k]}$ and $u \in V$, when we refer to the tuple $(\mathbf w, v)$, we mean the $[k+1]$-tuple whose first $k$ coordinates are given by $\mathbf w$ and whose $k+1$st coordinate is $v$. Abusing notation even further, when we deal with a $[k+1]$-labelled graph $F$, then by $[F](G, \mathbf w, v)$, we mean $[F](G, (\mathbf w, v))$. Similarly $\Conn_k^{t}$ denotes $\Conn_{[k]}^t$.
\end{remark}

\subsection{The quantifier eliminating theorem}

We now state Theorem~\ref{thm:foptopoly}, from which Theorem~\ref{thm:Main1}
follows easily. Informally, it says that an $\FOM$-formula $\varphi(\mathbf w)$
is essentially determined by the type of $\mathbf w$, $\type_G(\mathbf w)$, and
the labelled subgraph frequencies at $\mathbf w$, $\freq_G^c(\mathbf w)$.

\begin{theorem}
\label{thm:foptopoly}
For all primes $q$ and integers $k, t > 0$, there is a constant $c = c(k, t, q)$ such that for every $\FOM$ formula $\varphi(\alpha_1, \ldots, \alpha_k)$ with quantifier depth $t$, there is a function
$\psi : \Types(k) \times \Z_q^{\Conn_{k}^{c}} \rightarrow \{0, 1\}$ such that for all $p \in (0,1)$, the quantity
\begin{align*}
\Pr_{G \in G(n,p)}
\left[
\begin{array}{c}
\forall w_1, \ldots, w_k \in V_G, \quad\quad\quad\quad\quad\quad\quad\quad\quad\quad\quad\quad\quad\quad\quad\quad\quad\quad\\
\quad\quad(G \mbox{ satisfies }\varphi(w_1, \ldots, w_k)) \Leftrightarrow (\psi(\type_G(\mathbf w), \freq^c_G(\mathbf w)) = 1)
\end{array}
\right]
\geq 1 - 2^{-\Omega(n)}.
\end{align*}
%\end{align*}
%\begin{align*}
%&\Pr_{G \in G(n,])}[\quad\quad\quad\forall w_1, \ldots, w_k \in V_G\\
%&\bigl((G \mbox{ satisfies }\varphi(w_1, \ldots, w_k)) \Leftrightarrow \psi(\type_G(\mathbf w), \freq^c_G(\mathbf w)) = 1\bigr)\\
%&\quad\quad\quad\quad\quad\quad\quad\quad\quad\quad\quad\quad] \geq 1 - 2^{-\Omega(n)}.
%\end{align*}
\end{theorem}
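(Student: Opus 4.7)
The plan is to prove Theorem~\ref{thm:foptopoly} by induction on the quantifier depth $t$. For the base case $t=0$, a quantifier-free $\FOM$ formula $\varphi(\alpha_1,\ldots,\alpha_k)$ is a Boolean combination of atomic formulas $E(\alpha_i,\alpha_j)$ and $\alpha_i = \alpha_j$, whose truth values on $(G,\mathbf w)$ are determined exactly by $\type_G(\mathbf w)$. So $\psi$ depends only on the type, $c=0$ suffices, and the probability bound is trivial. Boolean connectives in the inductive step are handled by combining the $\psi$'s of the subformulas, and the error probabilities combine by a union bound. The work is in the quantifier step: given a depth-$(t+1)$ formula $Qv\,\varphi'(\mathbf w,v)$ with $\varphi'$ of depth $\leq t$ and $k+1$ free variables, and $Q\in\{\exists,\Mod^i_q\}$, I apply the induction hypothesis to obtain a constant $c'=c(k+1,t,q)$ and a function $\psi'$ such that, outside a $2^{-\Omega(n)}$ event, the truth of $\varphi'(\mathbf w,v)$ is determined by $\type_G(\mathbf w,v)$ and $\freq_G^{c'}(\mathbf w,v)$ for every $v$.

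For the $\Mod^i_q$ case, fix an extended type $\tau'\in\Types(k+1)$ and a prospective frequency $f'\in\Z_q^{\Conn_{k+1}^{c'}}$, and let
\[
N_{\tau',f'}(G,\mathbf w) := \#\{\,v\in V_G : \type_G(\mathbf w,v)=\tau',\ \freq_G^{c'}(\mathbf w,v)=f'\,\} \bmod q .
\]
Then the truth of $\Mod^i_q v\,\varphi'(\mathbf w,v)$ is (outside the bad event) the indicator that $\sum_{(\tau',f')\,:\,\psi'(\tau',f')=1} N_{\tau',f'}(G,\mathbf w) \equiv i \pmod q$. Thus the entire problem reduces to expressing each $N_{\tau',f'}(G,\mathbf w)\bmod q$ as a function of $\freq_G^c(\mathbf w)$ for some large enough constant $c$. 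This is the quantifier-conversion step carried out in Section~\ref{sec:extendcount}: using Lagrange-style indicator polynomials over $\Z_q$ to pick out the required value of each coordinate of $\freq_G^{c'}(\mathbf w,v)$, the count becomes a $\Z_q$-linear combination of sums $\sum_{v}\prod_j [F'_j]_q(G,\mathbf w,v)$. Each such sum over $v$, in turn, equals (by collapsing the label $k+1$) a labelled homomorphism count at $\mathbf w$ for a larger $k$-labelled graph obtained by gluing the $F'_j$ along their $(k+1)$-labelled vertex; after translating from injective homomorphism counts to unlabelled subgraph counts via the automorphism identity (generalizing Lemma~\ref{lem:autunlab}), one obtains a polynomial in entries of $\freq_G^c(\mathbf w)$ for an appropriate $c=c(k,t+1,q)$.

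For the $\exists$ case, the modular information in $\freq_G^c(\mathbf w)$ does not on its own detect whether some $N_{\tau',f'}(G,\mathbf w)$ is nonzero. Here I invoke the equidistribution of labelled subgraph frequencies proved in Section~\ref{sec:indep}, the generalization of Theorem~\ref{thm:Main2} to labelled graphs with roots $\mathbf w$: with probability $1-2^{-\Omega(n)}$, for every extended type $\tau'$ consistent with $\type_G(\mathbf w)$ and every feasible $f'$, the count $N_{\tau',f'}(G,\mathbf w)$ is nonzero (in fact comparable to $n/|\Freq|$). Consequently, the set of $(\tau',f')$ realized by some $v$ is, outside a $2^{-\Omega(n)}$ failure event, simply the set of all \emph{feasible} $(\tau',f')$ extending $\type_G(\mathbf w)$, which is determined by $\type_G(\mathbf w)$ alone. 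So $\exists v\,\varphi'(\mathbf w,v)$ reduces to a deterministic condition on $\type_G(\mathbf w)$ and on $\psi'$, finishing the induction.

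The main obstacle I expect is the bookkeeping inside the $\Mod^i_q$ step: turning ``conditioning on exact values of each coordinate of $\freq_G^{c'}(\mathbf w,v)$ and then summing over $v$'' into a genuine polynomial in the entries of $\freq_G^c(\mathbf w)$. This requires a careful choice of how to glue the labelled graphs $F'_j$ along the $(k+1)$-labelled vertex without inadvertently forcing equalities between unlabelled vertices, together with an inclusion-exclusion over partitions of unlabelled vertex sets so that one remains within the class of \konn{} labelled graphs of bounded size; the constant $c$ then grows as an explicit function of $c'$, $k$, and $q$. Once this is in place, Theorem~\ref{thm:Main1} follows by specializing Theorem~\ref{thm:foptopoly} to $k=0$ (so the type is trivial) and observing that labelled subgraph frequencies at the empty tuple coincide with the ordinary subgraph frequencies $\freq_G^c$.
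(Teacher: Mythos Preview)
Your overall architecture matches the paper's: induction on the formula, trivial base case, Boolean connectives by union bound, the $\Mod_q^i$ step via the counting-of-extensions machinery (Theorem~\ref{thm:extendcount} and Lemma~\ref{lem:kgraphtoconnected}), and the $\exists$ step via the labelled equidistribution result (Theorem~\ref{thm:superindep}). Your description of the $\Mod_q^i$ step is essentially what the paper does.

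There is, however, a genuine gap in your treatment of the $\exists$ case. You claim that, outside the bad event, the set of pairs $(\tau',f')$ realized by some $v$ equals the set of all feasible $(\tau',f')$ with $\tau'$ extending $\type_G(\mathbf w)$, and that this set ``is determined by $\type_G(\mathbf w)$ alone.'' That is false. The labelled frequency vector $\freq_G^{c'}(\mathbf w,v)$ is constrained by $\freq_G^{c}(\mathbf w)$: for any $k$-labelled $F$, the count $[F](G,\mathbf w)$ is recoverable from the counts $[H](G,\mathbf w,v)$ over suitable $(k{+}1)$-labelled $H$ (this is exactly Lemma~\ref{lem:ffreqcrux} and the ``extending'' relation defined just before it). Concretely, if $H\in\Conn_{k+1}^{c'}$ is independent of label $k{+}1$, then $[H]_q(G,\mathbf w,v)$ is the same for every $v\notin\{w_1,\dots,w_k\}$ and equals a fixed coordinate of $\freq_G^{c}(\mathbf w)$; so whether some $v$ satisfies a condition on that coordinate is a function of $\freq_G^{c}(\mathbf w)$, not of the type alone. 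The equidistribution theorem you cite (Theorem~\ref{thm:superindep}) reflects this: conditioned on $f=\freq_G^{a}(\mathbf w)$, each $f_i=\freq_G^{b}(\mathbf w,u_i)$ is (nearly) uniform over those $f'$ with $(\tau_i,f')$ \emph{extending} $(\tau,f)$, not over all of $\Freq_n(\tau_i,[k{+}1],b)$.

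The fix is exactly what the paper does: define $\psi(\tau,f)=1$ iff there exists $(\tau',f')$ with $(\tau',f')$ extending $(\tau,f)$ and $\psi'(\tau',f')=1$. Then use Theorem~\ref{thm:superindep} (plus a Chernoff bound to ensure every extending type $\tau'$ with $\{k{+}1\}\in\Pi_{\tau'}$ occurs among $\Omega(n)$ probe vertices) to show that whenever such a $(\tau',f')$ exists, some actual $v$ realizes it with probability $1-2^{-\Omega(n)}$; and handle the ``self-fulfilling'' case $\{k{+}1\}\notin\Pi_{\tau'}$ via the uniqueness Lemma~\ref{lem:self-fulfill}. With that correction your proposal lines up with the paper's proof.
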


Putting $k = 0$, we recover Theorem~\ref{thm:Main1}.

We now give a brief sketch of the proof of Theorem~\ref{thm:foptopoly} (the detailed proof appears in Section~\ref{sec:quant}). The proof is by induction on the size of the formula $\varphi$. When the formula
$\varphi$ has no quantifiers, then the truth of $\varphi(\mathbf w)$ on $G$ is completely
determined by $\type_G(\mathbf w)$. The case where $\varphi$ is of the form
$\varphi_1(\alpha_1, \ldots, \alpha_k) \wedge \varphi_2(\alpha_1, \ldots, \alpha_k)$ is
easily handled via the induction hypothesis. The case where
$\varphi(\alpha_1, \ldots, \alpha_k) = \neg \varphi_1(\alpha_1, \ldots, \alpha_k)$ is similar.

The key cases for us to handle are thus {\it (i)} $\varphi(\alpha_1, \ldots, \alpha_k)$
is of the form $\Mod_q^i \beta, \varphi'(\alpha_1, \ldots, \alpha_k, \beta)$, and {\it (ii)}
$\varphi(\alpha_1, \ldots, \alpha_k)$ is of the form $\exists \beta, \varphi'(\alpha_1, \ldots, \alpha_k, \beta)$.
We now give a sketch of how these cases may be handled.

For case (i), let $\psi' : \Types(k+1) \times \Z_q^{\Conn_{k+1}^b}$ be the function
given by the induction hypothesis for the formula $\varphi'$.
Thus for most graphs $G \in G(n,p)$ (namely the ones for which $\psi'$ is good
for $\varphi'$), $\varphi(w_1, \ldots, w_k)$
is true if and only the number of vertices $v \in V_G$ 
such that 
$\psi'(\type_{G}(\mathbf w, v), \freq_G^b(\mathbf w, v)) = 1$ is congruent to $i$ mod $q$. In Theorem~\ref{thm:extendcount} (whose proof appears in Section~\ref{sec:extendcount}), we show that the number of such vertices
$v$ can be determined solely as a function of $\type_G(\mathbf w)$
and $\freq^a_G(\mathbf w)$ for suitable $a$.
This fact allows us to define $\psi$ in a natural way,
and this completes case (i).

Case (ii) is the most technically involved case. As before, we get a function
$\psi'$ corresponding to $\varphi'$ by the induction hypothesis. We show
that one can define $\psi$ essentially as follows: define $\psi(\tau, f) =1$
if there exists some $(\tau', f') \in \Types(k+1) \times \Z_q^{\Conn_{k+1}^b}$
that ``extends" $(\tau, f)$ for which $\psi'(\tau', f') = 1$; otherwise
$\psi(\tau, f) = 0$. Informally, we show that if it is conceivable that there is a vertex
$v$ such that $\varphi'(\mathbf w, v)$ is true, then $\varphi(\mathbf w)$ is
almost surely true. Proving this statement requires us to
get a characterization of the distribution of labelled subgraph frequencies, significantly generalizing Theorem~\ref{thm:Main2}. This is done in Theorem~\ref{thm:superindep} (whose proof appears in Section~\ref{sec:indep}).

%Thus we handle the case of the $\exists$ quantifier.
%This completes the proof of Theorem~\ref{thm:foptopoly}.

\section{Quantifier Elimination}
\label{sec:quant}

In this section, we give a full proof of Theorem~\ref{thm:foptopoly}.
Before doing so, we state the main technical theorems: Theorem~\ref{thm:extendcount}
(which is needed for eliminating $\Mod_q$ quantifiers), and Theorem~\ref{thm:superindep}
(which is needed for eliminating $\exists$ quantifiers). We do this in the following
two subsections.

\subsection{Counting extensions}

The next theorem plays a crucial role in the elimination
of the $\Mod_q$ quantifiers. This is the only step where the assumption
that $q$ is a prime plays a role in the modular convergence law.

\begin{theorem}
\label{thm:extendcount}
Let $q$ be a prime, let $k, b >0 $ be integers and let $a \geq (q-1)\cdot b \cdot|\Conn_{k+1}^{b}|$.
There is a function
$$\lambda: \Types(k+1) \times \Z_q^{\Conn_{k+1}^{b}} \times \Types(k) \times \Z_q^{\Conn_k^{a}} \rightarrow \Z_q$$
such that for  all $\tau' \in \Types(k+1)$, $f' \in \Z_q^{\Conn_{k+1}^{b}}$,  $\tau \in \Types(k)$, $f \in \Z_q^{\Conn_k^a}$, it holds that for every graph $G$, and every $w_1, \ldots, w_k \in V_G$ with $\type_G(\mathbf w) = \tau$ and $\freq_G^{a}(\mathbf w) = f$, the cardinality of the set
$$\{v \in V_G : \type_G(\mathbf w, v) = \tau' \wedge \freq_G^{b}(\mathbf w, v) = f'\}$$
is congruent to $\lambda(\tau', f', \tau, f) \mod q.$
\end{theorem}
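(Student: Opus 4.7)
The plan is to express $N := |\{v \in V_G : \type_G(\mathbf{w}, v) = \tau' \wedge \freq_G^b(\mathbf{w}, v) = f'\}|$ modulo $q$ as a fixed $\Z_q$-polynomial (depending only on $\tau, \tau', f'$) evaluated on the entries of $\freq_G^a(\mathbf{w})$; the value of this polynomial at $f$ will be $\lambda(\tau', f', \tau, f)$. Primality of $q$ enters exactly once, through Fermat's little theorem: in $\Z_q$, the indicator of $x = y$ is the polynomial $1 - (x-y)^{q-1}$. Applied coordinate-wise, this yields
\[
\mathbf{1}[\freq_G^b(\mathbf{w}, v) = f'] \;=\; \prod_{F \in \Conn_{k+1}^b} \bigl(1 - ([F]_q(G, \mathbf{w}, v) - f'_F)^{q-1}\bigr),
\]
a $\Z_q$-polynomial in the $(k+1)$-labelled counts $[F]_q(G, \mathbf{w}, v)$ of total degree at most $(q-1)\,|\Conn_{k+1}^b|$.

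Next I would handle the type condition $\type_G(\mathbf{w}, v) = \tau'$ by case analysis on $\Pi_{\tau'}$. If the part of $\Pi_{\tau'}$ containing $k+1$ also contains some $j \leq k$ (the \emph{collapsing} case), $v = w_j$ is forced, so $N \in \{0,1\}$; moreover for each $F \in \Conn_{k+1}^b$ one has $[F]_q(G, \mathbf{w}, w_j) = [F^{(j)}]_q(G, \mathbf{w})$, where $F^{(j)}$ is the $k$-labelled graph obtained from $F$ by identifying $F(j)$ with $F(k+1)$ (which still lies in $\Conn_k^b \subseteq \Conn_k^a$), so in this case $N$ is already a function of $\tau$ and $\freq_G^b(\mathbf{w})$. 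If $k+1$ sits in its own singleton part of $\Pi_{\tau'}$ (the \emph{fresh-vertex} case), the adjacency portion of the type factors as a product of pure-label indicators $[A_i]_q(G, \mathbf{w}, v) \in \{0,1\}$, where $A_i$ is the $(k+1)$-labelled graph consisting of a single edge between labels $i$ and $k+1$ and no unlabelled vertices; the disequalities $v \ne w_i$ are incorporated by subtracting off the already-computed collapsing contributions via inclusion--exclusion. After these reductions, $\mathbf{1}[\type_G(\mathbf{w}, v) = \tau']$ is also a $\Z_q$-polynomial in the $[F]_q(G, \mathbf{w}, v)$'s.

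Multiplying the type and frequency indicators and summing over $v \in V_G$, $N \bmod q$ becomes a $\Z_q$-linear combination of monomial-sums
\[
S_{\mathbf j} \;:=\; \sum_{v \in V_G} \prod_F [F]_q(G, \mathbf{w}, v)^{j_F}, \qquad \sum_F j_F \leq (q-1)\,|\Conn_{k+1}^b|,
\]
together with the already-handled Case A contributions. Interpreting $[F]_q$ as counting injective homomorphisms, $S_{\mathbf j}$ is itself a labelled count $[\tilde F^{*}_{\mathbf j}]_q(G, \mathbf{w})$, where $\tilde F^{*}_{\mathbf j}$ is obtained by taking $j_F$ disjoint copies of each $F$, identifying them along all the $k+1$ shared labels, and then stripping the label $k+1$ (letting its image range over $V_G$, with appropriate correction terms again coming from $v = w_i$ which fold back into Case A and reduce the effective budget). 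The total number of non-label vertices of $\tilde F^{*}_{\mathbf j}$ is bounded by $\sum_F j_F \cdot b \leq (q-1) \cdot b \cdot |\Conn_{k+1}^b| = a$, so it fits in the size budget. Since $\tilde F^{*}_{\mathbf j}$ need not be label-connected, the final step is to decompose it into its label-connected components $H_1, \dots, H_r \in \Conn_k^a$ and rewrite $[\tilde F^{*}_{\mathbf j}]_q(G, \mathbf{w})$ as a polynomial in the $[H_i]_q(G, \mathbf{w})$ via the labelled analogue of Lemma~\ref{lem:kgraphtoconnected} (the standard inclusion--exclusion expressing injective counts of a disjoint union in terms of the counts of the components).

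The main obstacle will be the careful bookkeeping in the last two paragraphs: correctly composing labelled graphs while gluing at shared labels, tracking the size of $\tilde F^{*}_{\mathbf j}$ against the $a$-budget while also accounting for the additional unlabelled-$k+1$ vertex and the type-side indicators, and executing the inclusion--exclusion reduction to label-connected components without inflating the size. Modulo this bookkeeping, the argument is essentially polynomial manipulation, with Fermat's little theorem doing the substantive work and being the unique source of the primality assumption on $q$.
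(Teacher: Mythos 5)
Your overall strategy is the same as the paper's: use Fermat's little theorem (the sole source of primality) to turn the condition $\freq_G^b(\mathbf w,v)=f'$ into a $\Z_q$-polynomial in the counts $[F]_q(G,\mathbf w,v)$, split on whether the part of $\Pi_{\tau'}$ containing $k+1$ is a singleton, sum over $v$ to strip the label $k+1$, and finally invoke Lemma~\ref{lem:kgraphtoconnected} to land back in the coordinates of $\freq_G^a(\mathbf w)$. The collapsing case and the handling of the adjacency pattern are also essentially the paper's Case 2 and Case 1.

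However, there is a genuine gap at the central step: the claimed identity $S_{\mathbf j}=\sum_{v}\prod_F [F]_q(G,\mathbf w,v)^{j_F}=[\tilde F^{*}_{\mathbf j}]_q(G,\mathbf w)$ is false, because a product of \emph{injective}-homomorphism counts is not the injective count of the graph obtained by gluing disjoint copies along the shared labels: the several homomorphisms in a tuple may collide on unlabelled vertices, and those tuples are not counted by $[\tilde F^{*}_{\mathbf j}]$. For instance, with $k=0$ and $F$ a single edge with one endpoint labelled, $[F](G,v)^2=\deg(v)^2$, whereas the glued graph has count $\deg(v)(\deg(v)-1)$; the discrepancy $\deg(v)$ is the term in which the two unlabelled endpoints are identified. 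The correct statement is the paper's Lemma~\ref{lem:productinj}: $[F_1](G,\mathbf w)\cdot[F_2](G,\mathbf w)=\sum_{\eta\in\pmat(F_1,F_2)}[F_1\vee_\eta F_2](G,\mathbf w)$, i.e.\ a sum over all partial matchings of unlabelled vertices, not just the disjoint gluing. Your closing appeal to ``the standard inclusion--exclusion'' via Lemma~\ref{lem:kgraphtoconnected} goes in the other direction (it rewrites a single, possibly label-disconnected, count as a polynomial in label-connected counts) and does not repair the earlier false equation. The gap is fixable along your own lines: iterating Lemma~\ref{lem:productinj} expresses each monomial $\prod_F[F]_q^{j_F}$ as a $\Z_q$-linear combination of counts $[F']_q(G,\mathbf w,v)$ of $(k{+}1)$-labelled graphs with at most $\sum_F j_F\cdot b\le (q-1)\,b\,|\Conn_{k+1}^b|$ unlabelled vertices (gluing only shrinks graphs), after which your label-stripping, the $v=w_i$ corrections, and the reduction via Lemma~\ref{lem:kgraphtoconnected} go through as in the paper. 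But as written, the proposal asserts an incorrect identity and omits the partial-matching expansion that the paper's proof is built on.
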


The proof appears in Section~\ref{sec:extendcount}.
The principal ingredient in its proof is the following lemma, which states
that the numbers $[F](G, \mathbf w)$, as $F$ varies over small label-connected graphs, determine the number $[F'](G, \mathbf w)$ for all small graphs $F'$.

\begin{lemma}{\bf (Label-connected subgraph frequencies determine all subgraph frequencies)}
\label{lem:kgraphtoconnected}
For every $k$-labelled graph $F'$ with $|V_{F'} \setminus \mathcal L(F')| \leq t$, there is a polynomial $\delta_{F'} \in \mathbb Z[(X_F)_{F \in \Conn_k^{t}}]$ such that for all graphs $G$ and $\mathbf w \in V_G^k$,
$$[F'](G, \mathbf w) = \delta_{F'}(x),$$
where $x \in \mathbb Z^{\Conn_k^{t}}$ is given by $x_{F} = [F](G, \mathbf w)$.
\end{lemma}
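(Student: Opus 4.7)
I would proceed by strong induction on $n = |V_{F'} \setminus \mathcal L(F')|$, the number of unlabelled vertices of $F'$. The base case (and ``easy'' branch at every value of $n$) occurs when $F'$ is label-connected, for then $F' \in \Conn_k^t$ and we may simply take $\delta_{F'}(X) := X_{F'}$.

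For the inductive step, assume $F' \setminus \mathcal L(F')$ splits into $m \ge 2$ connected components $C_1, \ldots, C_m$. For each $i$, let $F_i$ be the $k$-labelled graph whose unlabelled vertex set is $C_i$ (with induced edges), together with all of $\mathcal L(F')$ and every edge of $F'$ joining $\mathcal L(F')$ to $C_i$. Each $F_i$ is label-connected with at most $t$ unlabelled vertices, so $F_i \in \Conn_k^t$ and $[F_i](G, \mathbf w) = x_{F_i}$. The tuples $(\chi_1, \ldots, \chi_m)$ counted by $\prod_i [F_i](G, \mathbf w)$ may overlap on their unlabelled images, so I would classify each such tuple by the equivalence relation $\sim$ on $\bigsqcup_i C_i$ defined by $u \sim v$ iff $\chi_i(u) = \chi_j(v)$ (where $u \in C_i$, $v \in C_j$). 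Injectivity of each $\chi_i$ forces $\sim$ to keep vertices of the same $C_i$ in distinct classes. For every such $\sim$, let $F'/\sim$ denote the $k$-labelled graph obtained by identifying unlabelled vertices according to $\sim$ (the labelled vertices, being untouched, still form an independent set). A direct bijection matches tuples with overlap pattern $\sim$ with injective homomorphisms $F'/\sim \to (G, \mathbf w)$, yielding the identity
\begin{equation*}
\prod_{i=1}^m [F_i](G, \mathbf w) \;=\; \sum_{\sim} [F'/\sim](G, \mathbf w).
\end{equation*}
Since the trivial $\sim$ contributes $[F'](G, \mathbf w)$, rearranging gives
\begin{equation*}
[F'](G, \mathbf w) \;=\; \prod_{i=1}^m x_{F_i} \;-\; \sum_{\sim\,\ne\,\mathrm{triv}} [F'/\sim](G, \mathbf w).
\end{equation*}
Every non-trivial $\sim$ strictly decreases the unlabelled vertex count, so by the inductive hypothesis each $[F'/\sim](G, \mathbf w)$ equals $\delta_{F'/\sim}(x)$ for some integer polynomial; substituting these assembles the desired $\delta_{F'} \in \mathbb Z[(X_F)_{F \in \Conn_k^t}]$.

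The main obstacle I expect is the combinatorial bookkeeping surrounding the quotient and the underlying bijection: one must verify that $F'/\sim$ is a well-defined $k$-labelled graph (no self-loop is created since $\sim$ never identifies two vertices within a single $C_i$, and the labelled independent set is preserved since only unlabelled vertices are merged) and that each injective homomorphism $\chi : F'/\sim \to (G,\mathbf w)$ lifts uniquely, via composition with the natural inclusions $F_i \hookrightarrow F'/\sim$, to a tuple $(\chi_1,\ldots,\chi_m)$ whose overlap pattern is exactly $\sim$. Once this set-up is pinned down, the displayed identity is immediate and the induction closes without further complications.
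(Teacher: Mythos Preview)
Your proposal is correct and follows essentially the same strategy as the paper: express the product of the label-connected pieces as a sum over ``gluings'', isolate the trivial gluing $F'$, and handle the remaining terms by induction. The only difference is organizational. The paper splits off \emph{two} pieces at a time, writing $F' = F_1 \vee F_2$ and invoking the binary identity
\[
[F_1](G,\mathbf w)\cdot [F_2](G,\mathbf w) \;=\; \sum_{\eta \in \pmat(F_1,F_2)} [F_1 \vee_\eta F_2](G,\mathbf w),
\]
then inducts on the number of connected components of $F'\setminus\mathcal L(F')$ (so the $F_i$ in the inductive step need not themselves be label-connected). You instead split into \emph{all} $m$ label-connected pieces at once, prove the $m$-fold version of the same identity via your equivalence relations $\sim$, and induct on the number of unlabelled vertices. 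Your version has the minor advantage that the factors $F_i$ are immediately in $\Conn_k^t$, so the product side is directly $\prod_i x_{F_i}$; the paper's version keeps each step to a two-variable partial-matching, which makes the bijection (their Lemma on $\pmat$) a bit cleaner to state. Either packaging yields the same polynomial $\delta_{F'}$.
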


\subsection{The distribution of labelled subgraph frequencies mod q}

In this subsection, we state the theorem that will help us eliminate $\exists$ quantifiers.
Let us first give an informal description of the theorem.
We are given a tuple $\mathbf w \in [n]^k$, and distinct
$u_1, \ldots, u_s \in [n] \setminus \{w_1, \ldots, w_k\}$.
Let $G$ be sampled from $G(n,p)$ (recall that we think of $G(n,p)$
as a random graph whose vertex set is $[n]$: thus the $w_i$ and $u_j$
are vertices of $G$).
The theorem completely describes the joint distribution of the labelled subgraph frequency vectors
at all the tuples $\mathbf w$, $(\mathbf w, u_1), \ldots, (\mathbf w, u_s)$; namely
it pins down the distribution of
$(\freq_G^a(\mathbf w), \freq_G^b(\mathbf w, u_1), \ldots, \freq_G^b(\mathbf w, u_s))$.
We first give a suitable definition of the set of {\em feasible frequency vectors},
and then claim that (a) the $\freq_G^a(\mathbf w)$ is essentially uniformly distributed
over the set of its feasible frequency vectors, and (b) conditioned on $\freq_G^a(\mathbf w)$,
the distributions of $\freq_G^b(\mathbf w, u_1), \ldots, \freq_G^b(\mathbf w, u_s)$ are all
essentially independent and uniformly distributed over the set of those feasible frequency
vectors that are ``consistent" with $\freq_G^a$.

To define the set of feasible frequency vectors (which will equal the set of all possible
values that $\freq_G^a(\mathbf w)$ may assume), there are two factors that come into play.
The first factor, one that we already encountered while dealing with unlabelled graphs,
is a divisibility constraint: the number $[F](G, \mathbf w)$ is always divisible
by a certain integer depending on $F$, and hence for some $F$, it cannot assume arbitrary values mod $q$. The second factor is a bit subtler: when $w_1, \ldots, w_k$ are not all distinct,
for certain pairs $F, F'$ of label-connected $k$-labelled graphs,
$[F](G, \mathbf w)$ is forced to equal $[F'](G,\mathbf w)$.
Let us see a simple example of such a phenomenon. Let $k = 2$ and let $w_1 = w_2$. 
Let the $2$-labelled graph $F$ be a path of length $2$ with ends labelled 1 and 2.
Let the $2$-labelled graph $F'$ be the disjoint union of an edge, one of whose ends is labelled 1, and an isolated vertex labelled 2. Then in any graph $G$, $[F](G, \mathbf w) = [F'](G, \mathbf w) = $
the degree of $w_1$. 

In the rest of this subsection, we will build up some notation and results leading up to a definition of feasible frequency vectors
and the statement of the main technical theorem describing the distribution of labelled subgraph
frequency vectors.

\begin{definition}[Quotient of a labelled graph by a paritition]
Let $F$ be a $I$-labelled graph and let $\Pi \in \Partitions(I)$. We define $F/\Pi$ to be the $\Pi$-labelled graph
obtained from $F$ by (a) for each $J \in \Pi$, identifying all the vertices with
labels in $J$ and labelling this new vertex $J$, and (b) deleting duplicate edges.
If $F$ and $F'$
are $I$-labelled graphs and $\Pi \in \Partitions(I)$, we say $F$
and $F'$ are $\Pi$-equivalent if $F/\Pi \cong F'/\Pi$.
\end{definition}

Let $\mathbf w \in V_G^{I}$. Let $\Pi \in \Partitions(I)$
be such that $\mathbf w$ respects $\Pi$. Define 
$(\mathbf w / \Pi) \in V_G^{\Pi}$ by: for each $J \in \Pi$, $(\mathbf w/\Pi)_J = w_j$,  where $j$ is any element
of $J$ (this definition is independent of the
choice of $j \in J$). Observe that as $J$ varies
over $\Pi$, the vertices $(w/\Pi)_J$ are all distinct.

%where $(\mathbf w/\Pi) \in V_G^{\Pi}$ is such that for any $J \in \Pi$, $(\mathbf w/\Pi)_J = w_j$ for any $j \in J$.

The next two lemmas show that the numbers $[F](G, \mathbf w)$
must satisfy certain constraints. These constraints will eventually
motivate our definition of feasible frequency vectors.

\begin{lemma}
\label{lem:partitionedinj}
If $G$ is a graph and $\mathbf w \in V_G^I$, with
$\mathbf w$ respecting $\Pi \in \Partitions(I)$,
then for any $I$-labelled $F$,
\begin{align}
[F](G, \mathbf w) = [F/\Pi](G, (\mathbf w/\Pi)).
\end{align}
\end{lemma}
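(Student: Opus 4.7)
The plan is to exhibit a bijection $\Phi: \Inj(F,(G,\mathbf w)) \to \Inj(F/\Pi, (G, \mathbf w/\Pi))$, which immediately yields the claimed equality of cardinalities.

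Given $\chi \in \Inj(F, (G, \mathbf w))$, I would define $\tilde\chi = \Phi(\chi)$ on $V_{F/\Pi}$ as follows. Since the quotient only identifies labelled vertices, the unlabelled vertices of $F/\Pi$ are in natural bijection with those of $F$; on these, set $\tilde\chi = \chi$. For each part $J \in \Pi$, set $\tilde\chi((F/\Pi)(J)) = w_j$ for any $j \in J$ — this is well-defined because $\mathbf w$ respects $\Pi$, and it equals $(\mathbf w/\Pi)_J$ by definition. Conversely, given $\tilde\chi \in \Inj(F/\Pi,(G,\mathbf w/\Pi))$, define $\chi = \Psi(\tilde\chi)$ by $\chi(v) = \tilde\chi(v)$ on unlabelled vertices and $\chi(F(i)) = w_i$ on each labelled vertex. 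It is immediate from the constructions that $\Phi$ and $\Psi$ are inverses of each other, so the only real work is to verify that each map lands in the claimed set.

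The verifications split into the homomorphism property and the ``injective on non-labelled pairs'' property. For $\Phi$: an edge $(u,v) \in E_F$ cannot have both endpoints labelled (the labelled vertices of $F$ form an independent set by definition), so its image under the quotient is a well-defined edge in $F/\Pi$; hence the fact that $\chi$ preserves edges passes directly to $\tilde\chi$. For injectivity of $\tilde\chi$ on a pair not entirely inside $\mathcal L(F/\Pi)$, the case where both endpoints are unlabelled is inherited from $\chi$, and in the mixed case where $u = (F/\Pi)(J)$ and $v$ is unlabelled, one uses that $F(j)$ and $v$ are a non-labelled pair in $F$, giving $\tilde\chi(u) = w_j = \chi(F(j)) \neq \chi(v) = \tilde\chi(v)$. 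The argument for $\Psi$ is symmetric: every edge of $F$ still corresponds to an edge of $F/\Pi$ (deletion of duplicate edges in the quotient definition is irrelevant since we only need edge-existence), and injectivity on non-labelled pairs in $F$ follows by tracing back through the quotient and invoking the fact that $\mathbf w$ respects $\Pi$ so that distinct parts of $\Pi$ correspond to distinct vertices of $G$.

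The main subtle point I anticipate is keeping track of exactly which pairs injectivity is required on, since labelled vertices are allowed to collide but unlabelled-labelled and unlabelled-unlabelled pairs are not. The hypothesis that $\mathbf w$ respects $\Pi$ is essential precisely here: it guarantees that $(\mathbf w/\Pi)_{J} \neq (\mathbf w/\Pi)_{J'}$ for distinct parts $J, J' \in \Pi$, which in turn is what makes $\tilde\chi$ (respectively, $\chi$) injective on the relevant pairs. The assumption that labelled vertices induce an independent set in $F$ is the other key structural ingredient, as without it the quotient $F/\Pi$ could acquire self-loops that would spoil the bijection.
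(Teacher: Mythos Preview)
Your proof is correct and takes essentially the same approach as the paper: both establish the equality by exhibiting the natural bijection between $\Inj(F,(G,\mathbf w))$ and $\Inj(F/\Pi,(G,\mathbf w/\Pi))$ induced by the quotient map $\pi: F \to F/\Pi$. The paper defines the bijection in the opposite direction first (as $\chi' \mapsto \chi' \circ \pi$, which is your $\Psi$) and then checks one-to-one and onto, whereas you write down both directions explicitly and verify the homomorphism and injectivity conditions in slightly more detail --- but the underlying argument is identical.
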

\begin{proof}
We define a bijection $\alpha: \Inj(F/\Pi, (G, \mathbf w/\Pi)) \rightarrow \Inj(F, (G, \mathbf w))$.
Let $\pi \in \Hom(F, F/\Pi)$ be the natural homomorphism sending each unlabelled vertex in $V_F$ to its corresponding vertex in $V_{F/\Pi}$, and, for each $i \in I$ sending $F(i)$ to $(F/\Pi)(\Pi(i))$.
We define $\alpha(\chi)$ to be $\chi\circ \pi$.

Take distinct $\chi, \chi'\in \Inj(F/\Pi, (G, \mathbf w/\Pi))$. Let $u \in V_{F/\Pi}$ with $\chi(u) \neq \chi'(u)$. Note that $u$ cannot be an element of $\mathcal L(F/\Pi)$, for if $u = (F/\Pi)(\Pi(i))$, then $\chi(u) = \chi'(u) = w_i$. Thus $u \not\in \mathcal L(F/\Pi)$. Let $v \in V_F$ be the vertex $\pi^{-1}(u)$ (which is uniquely specified since $u \not\in \mathcal L(F/\Pi)$). Thus we have $\chi(\pi(v)) = \chi(u) \neq \chi'(u) = \chi'(\pi(v))$. Thus $\alpha(\chi) \neq \alpha(\chi')$, and $\alpha$ is one-to-one.

To show that $\alpha$ is onto, take any $\chi \in \Inj(F, (G, \mathbf w))$. Define $\chi' \in \Inj(F/\Pi, (G, \mathbf w/\Pi))$ by:
\begin{enumerate}
\item $\chi'(u) = \chi(\pi^{-1}(u))$ if $u \not\in \mathcal L(F/\Pi)$.
\item $\chi'(u) = w_j$ for any $j \in J$, if $u = (F/\Pi)(J)$ with $J \in \Pi$.
\end{enumerate}
Then $\alpha(\chi') = \chi$.
\end{proof}

\begin{lemma}
\label{lem:aut}
Let $G$ be a graph and $\mathbf w \in V_G^{I}$. Suppose
all the $(w_i)_{i \in I}$ are distinct.
Let $F$ be an $I$-labelled \konn graph with $|E_F| \geq 1$.
Then
$$ [F](G, \mathbf w) = \aut(F) \cdot \langle F \rangle(G, \mathbf w).$$
\end{lemma}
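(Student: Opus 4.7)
The plan is to count $\Inj(F, (G, \mathbf w))$ by grouping injective homomorphisms according to the copy they induce. Specifically, consider the map
$$\Phi : \Inj(F, (G, \mathbf w)) \to \Cop(F, (G, \mathbf w)), \qquad \chi \mapsto \chi(E_F).$$
By definition of $\Cop$, $\Phi$ is surjective, so it suffices to show every fiber $\Phi^{-1}(E)$ has cardinality exactly $\aut(F)$; the lemma then follows by summing over copies.

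For the lower bound, fix any $\chi_0 \in \Phi^{-1}(E)$ and observe that for each $\sigma \in \Aut(F)$, the composition $\chi_0 \circ \sigma$ lies in $\Phi^{-1}(E)$: it sends $F(i)$ to $w_i$ (since $\sigma$ fixes labels) and maps $E_F$ onto $E$ (since $\sigma$ permutes $E_F$). Distinct $\sigma$'s yield distinct compositions because the hypothesis that the $w_i$'s are distinct forces $\chi_0$ to be \emph{globally} injective on $V_F$ (not merely injective on unlabelled vertices). This gives $|\Phi^{-1}(E)| \geq \aut(F)$.

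For the matching upper bound, the strategy is to show that any $\chi_1, \chi_2 \in \Phi^{-1}(E)$ satisfy $\chi_2 = \chi_1 \circ \sigma$ for some $\sigma \in \Aut(F)$; the natural candidate is $\sigma := \chi_1^{-1} \circ \chi_2$. The main technical hurdle---the one place where label-connectedness is essential---is to verify that $\sigma$ is well defined, i.e., that $\chi_1(V_F) = \chi_2(V_F)$. The key subclaim here is that every unlabelled vertex of $F$ lies in the set $V(E_F)$ of vertices incident to some edge of $E_F$. This splits into a small case analysis on $|V_F \setminus \mathcal L(F)|$: the case $|V_F \setminus \mathcal L(F)| = 0$ is ruled out by $|E_F| \geq 1$ together with the independence of $\mathcal L(F)$; the singleton case uses $|E_F| \geq 1$ and the independence of $\mathcal L(F)$ directly; and when $|V_F \setminus \mathcal L(F)| \geq 2$, connectedness of $F \setminus \mathcal L(F)$ supplies an incident edge at each unlabelled vertex. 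Granted the subclaim, each $\chi_j$ maps $V(E_F)$ bijectively onto $V(E)$, the remaining (isolated, labelled) vertices are pinned to the $w_i$'s by both $\chi_1$ and $\chi_2$, and we conclude $\chi_1(V_F) = \chi_2(V_F) = \{w_i : i \in I\} \cup V(E)$.

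With $\sigma$ now well defined, the verification that $\sigma \in \Aut(F)$ is routine: $\sigma$ is a bijection of $V_F$ (composition of two bijections onto a common set, via distinctness of the $w_i$); $\sigma$ fixes each $F(i)$ because $\chi_1(F(i)) = \chi_2(F(i)) = w_i$; and $\sigma(E_F) = E_F$ because $\chi_1(\sigma(E_F)) = \chi_2(E_F) = E = \chi_1(E_F)$, so applying the global injectivity of $\chi_1$ edge-by-edge recovers $E_F$. Uniqueness of $\sigma$ follows from global injectivity of $\chi_1$. This establishes the bijection $\Aut(F) \to \Phi^{-1}(E)$, $\sigma \mapsto \chi_1 \circ \sigma$, and hence the lemma.
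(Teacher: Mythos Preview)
Your proof is correct and follows essentially the same approach as the paper: both establish a bijection between $\Aut(F)\times\Cop(F,(G,\mathbf w))$ and $\Inj(F,(G,\mathbf w))$ via $(\sigma,E)\mapsto \chi_E\circ\sigma$, with the key step being that any two injective homomorphisms with the same edge-image have the same vertex-image (by label-connectedness), so one can form $\sigma=\chi_1^{-1}\circ\chi_2$. You spell out in more detail the points the paper leaves terse---namely that distinctness of the $w_i$ upgrades ``injective'' to globally injective, and the case analysis showing every unlabelled vertex is incident to an edge---but the argument is the same.
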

\begin{proof}
We give a bijection $\alpha:  \Aut(F) \times \Cop(F, (G, \mathbf w)) \rightarrow \Inj(F, (G, \mathbf w))$.

For each $E \in \Cop(F, (G, \mathbf w))$, we fix a $\chi_E \in \Inj(F, (G, \mathbf w))$ such that
$\chi_E(E_F) = E$.
Then we define $\alpha(\sigma, E) = \chi_E \circ \sigma$.

First notice that $\alpha(\sigma, E)(E_F) = \chi_E(\sigma (E_F)) = \chi_E (E_F) = E$.
Thus if $\alpha(\sigma,E)  = \alpha(\sigma',E')$, then $E = E'$.
But since $\chi_E$ is injective, for any $\sigma \neq \sigma'$, we have
$\chi_E \circ \sigma \neq \chi_E \circ \sigma'$. Thus $\alpha$ is one-to-one.

To show that $\alpha$ is onto, take any $\chi \in \Inj(F, (G, \mathbf w))$.
Let $E = \chi(E_F)$. As $F$ is \konn and $\chi_E(E_F) = \chi(E_F)$,
we have $\chi_E(V_F) = \chi(V_F)$. We may now define
$\sigma \in \Aut(F)$ by $\sigma(u) = \chi_E^{-1}(\chi(u))$ for
each $u \in V_F$.
Clearly, $\alpha(\sigma, E) = \chi$, and so $\alpha$ is onto.

Thus $\alpha$ is a bijection, and the lemma follows.
\end{proof}

Note that Lemma~\ref{lem:aut-ident} and Lemma~\ref{lem:autunlab} follow formally from the above
lemma.

%Consequently by Lemma~\ref{lem:aut}, for any \konn graph $F$,
%\begin{equation}
%\label{eq:partitionedcop}
%\aut(F) \cdot \langle F \rangle(G, \mathbf w) = \aut(F/\Pi) \cdot \langle F/\Pi \rangle(G, (\mathbf w/\Pi))
%\end{equation}

Let $\K_1(I)$ be the $I$-labelled graph with $|I| + 1$ vertices:
$|I|$ labelled vertices and one isolated unlabelled vertex. The role of $\K_1(I)$
in the $I$-labelled theory is similar to the role of $\K_1$ in the 
unlabelled case.
% ($\K_1$ is a connected graph and the
%number of copies in $\K_1$ in $G(n,p)$ depends only on $n \mod q$).

\begin{definition}[Feasible frequency vectors]
We define the set of {\em feasible frequency vectors},
$\Freq(\tau, I, a)$ to be the set of $f \in \Z_q^{\Conn_I^a}$
such that
\begin{enumerate}
\item[(a)] for any $F \in \Conn_I^a$, we have $f_F \in \aut(F / \Pi_{\tau}) \cdot \Z_q$.
% for which $q \mid \aut(F/\Pi_{\tau})$,we have $f_F = 0$.
\item[(b)] for any $F, F' \in \Conn_I^a$ that are $\Pi_{\tau}$-equivalent,
we have $f_F = f_{F'}$.
\end{enumerate}
Let $\Freq_{n}(\tau, I, a)$ be the set $\{ f \in \Freq(\tau, I, a) : f_{K_1(I)} = n - |\Pi_{\tau}| \mod q \}$.
Note that if $n = n' \mod q$, then $\Freq_n(\tau, I, a) = \Freq_{n'}(\tau, I, a)$.
\end{definition}

Observe that for any $\mathbf w \in V_G^I$ with $\type_G(\mathbf w) = \tau$, the vector $\freq_G^a(\mathbf w)$ is an element of $\Freq(\tau, I, a)$. This follows from Lemma~\ref{lem:partitionedinj} and Lemma~\ref{lem:aut},
which allow us to deduce (recall that $(\mathbf w/\Pi_{\tau})_J$ are all distinct for $J \in \Pi_{\tau}$) that for any $F \in \Conn_I^a$,
\begin{equation}
\label{eq:partitionedcop}
[F](G, \mathbf w) = \aut(F/\Pi_{\tau}) \cdot \langle F/\Pi_{\tau}\rangle(G, \mathbf w/\Pi_{\tau}).
\end{equation}
Observe also that if $|V_G| = n$, then $\freq_G^a(\mathbf w) \in \Freq_n(\tau, I, a)$, since
$[K_1(I)](G, \mathbf w) = |V_G \setminus \{w_1, \ldots, w_k\}| = n - |\Pi_{\type(\mathbf w)}|$, as
required by the definition.

\begin{definition}[Extending]
Let $I$ be a set and let $J = I \cup \{i^*\}$. Let $a \geq b > 0$ be positive integers.
We say $(\tau', f') \in \Types(J) \times \Freq(\tau', J, b)$
{\em extends} $(\tau, f) \in \Types(I) \times \Freq(\tau, I, a)$ if $\tau'$ extends $\tau$,
and for every $F \in \Conn_I^b$, we have
\begin{enumerate}
\item if $\{i^*\} \not\in \Pi_{\tau'}$,
\begin{equation}
\label{eq:self}
f_F = f'_{\widetilde{F}},
\end{equation}
where $\widetilde{F}$ is the graph obtained from $F$ by introducing an isolated vertex labelled $i^*$.
\item if $\{i^*\} \in \Pi_{\tau'}$, letting $\delta_H :\Z_q^{\Conn_J^b} \rightarrow \Z_q$ be
the function given by Lemma~\ref{lem:kgraphtoconnected},
\begin{equation}
\label{eq:freqextend}
f_F = f'_{\widetilde{F}} + \sum_{u \in V_F\setminus \mathcal L(F)} c_u \delta_{F_u}(f'),
\end{equation}
where
\begin{itemize}
\item $\widetilde{F}$ is the graph obtained from $F$ by introducing an isolated vertex labelled $i^*$.
\item $c_u$ equals $1$ if for all $i \in I$, if $u$ is adjacent to $F(i)$,
then $\{\Pi_{\tau'}(i^*), \Pi_{\tau'}(i)\} \in E_{\tau'}$. Otherwise, $c_u = 0$.
\item $F_u$ is the graph obtained from $F$ by labelling the vertex $u$ by $i^*$ and deleting all
edges between $u$ and the other labelled vertices of $F$.
\end{itemize}
\end{enumerate}
\end{definition}

The crux of the above definition is captured in the following lemma.
\begin{lemma}
\label{lem:ffreqcrux}
%lem:self-fulfill
Let $G$ be a graph. Let $a \geq b> 0$ be integers.
Let $\mathbf w \in V^k$ and $v \in V$.
Let $\tau = \type_G(\mathbf w)$, $\tau' = \type_G(\mathbf w, v)$, $f = \freq_G^a(\mathbf w)$ and
$f' = \freq_G^b(\mathbf w, v)$.
Then $(\tau', f')$ extends $(\tau, f)$.
\end{lemma}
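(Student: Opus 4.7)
The plan is to verify the two clauses of the definition of ``extends'' directly, by constructing explicit bijections between appropriate sets of injective homomorphisms. First, $\tau' = \type_G(\mathbf w, v)$ extends $\tau = \type_G(\mathbf w)$ immediately from the definitions of $\type$ and of ``extends'' for types, so it remains, fixing $F \in \Conn_I^b$, to verify~\eqref{eq:self} when $\{i^*\} \notin \Pi_{\tau'}$ (equivalently, $v = w_i$ for some $i \in I$), and~\eqref{eq:freqextend} when $\{i^*\} \in \Pi_{\tau'}$ (equivalently, $v \notin \{w_1,\ldots,w_k\}$).

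For the first case, I would define a map $\chi \mapsto \widetilde{\chi}$ from $\Inj(F,(G,\mathbf w))$ to $\Inj(\widetilde F,(G,\mathbf w,v))$ by additionally sending the new $i^*$-labelled isolated vertex of $\widetilde F$ to $v$. Since $v = w_i$ and the only injectivity exception in the labelled setting is allowed between two labelled vertices, $\widetilde{\chi}$ is a legal injective homomorphism, and ``forgetting the image of the new vertex'' supplies the inverse (note $\widetilde F$ is still label-connected since its unlabelled subgraph equals that of $F$). Taking cardinalities modulo $q$ yields~\eqref{eq:self}.

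For the second case, I would partition $\Inj(F,(G,\mathbf w))$ according to whether and which unlabelled vertex $u \in V_F \setminus \mathcal L(F)$ has $\chi(u) = v$; since the unlabelled images are pairwise distinct and distinct from every $w_i$, and $v \neq w_i$ for all $i$, at most one such $u$ can occur. The homomorphisms with no unlabelled vertex mapping to $v$ are in bijection with $\Inj(\widetilde F,(G,\mathbf w,v))$ (again by extending with the new vertex sent to $v$), contributing $f'_{\widetilde F}$ modulo $q$. For each fixed unlabelled $u$, the homomorphisms with $\chi(u)=v$ are in bijection with $\Inj(F_u,(G,\mathbf w,v))$: relabelling $u$ as $i^*$ encodes the constraint $\chi(u)=v$, while every edge between $u$ and an $F(j)$ in $F$ (forbidden, hence removed, in the labelled graph $F_u$) corresponds to an adjacency $v \sim w_j$ in $G$, which is precisely what the coefficient $c_u = 1$ records. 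Thus this contribution equals $c_u \cdot [F_u](G,\mathbf w,v)$, and applying Lemma~\ref{lem:kgraphtoconnected} (valid because $F_u$ has at most $b-1 \leq b$ unlabelled vertices, though it may fail to be label-connected) rewrites this as $c_u \cdot \delta_{F_u}(f') \pmod q$. Summing over $u$ gives~\eqref{eq:freqextend}.

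The principal subtlety will be in the second case: one must correctly align the edge-deletion in the definition of $F_u$ with the role of $c_u$, and handle that $F_u$ need not be label-connected by passing through Lemma~\ref{lem:kgraphtoconnected}. Everything else is routine bookkeeping around the definition of an injective labelled homomorphism.
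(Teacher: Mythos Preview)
Your proposal is correct and follows essentially the same approach as the paper: the paper also observes that $\tau'$ extends $\tau$ trivially, handles the case $\{k+1\}\notin\Pi_{\tau'}$ by noting the bijection $\Inj(F,(G,\mathbf w))\cong\Inj(\widetilde F,(G,\mathbf w,v))$ is immediate, and in the case $\{k+1\}\in\Pi_{\tau'}$ partitions $\Inj(F,(G,\mathbf w))$ into ``Category 1'' ($v\notin\chi(V_F)$, giving $[\widetilde F](G,\mathbf w,v)$) and ``Category 2'' ($\chi(u)=v$ for a unique unlabelled $u$, giving $c_u[F_u](G,\mathbf w,v)$). Your write-up is slightly more explicit---in particular you spell out why Lemma~\ref{lem:kgraphtoconnected} is needed to pass from $[F_u]_q(G,\mathbf w,v)$ to $\delta_{F_u}(f')$, which the paper leaves implicit---but the argument is the same.
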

\begin{proof}
We keep the notation of the previous definition. First observe that $\tau'$ extends $\tau$.

If $\{k+1\} \not\in \Pi_{\tau'}$, then we need to show that
$[F]_q(G, \mathbf w) = [\widetilde{F}]_q(G, \mathbf w, v)$ for each $F \in \Conn_{k}^b$.
This is immediate from the definitions.

If $\{k+1\} \in \Pi_{\tau'}$, then we need to show that
$[F]_q(G, \mathbf w) = [\widetilde{F}]_q(G, \mathbf w, v) + \sum_{u \in V_F\setminus \mathcal L(F)} c_u [F_u]_q(G, \mathbf w, v)$.
We do this by counting the $\chi \in \Inj(F, (G,\mathbf w))$ based on its image $\chi(V_F)$
as follows:
\begin{enumerate}
\item Category 1: $v \not\in \chi(V_F)$. There are precisely $[\widetilde{F}](G,\mathbf w, v)$ such $\chi$.
\item Category 2: $v = \chi(u)$ (in this case $u$ is uniquely specified).
Note that $u \not\in \mathcal L(F)$. Then it must be the case that for any $i \in [k]$ such
that $u$ is adjacent to $F(i)$, $w_i$ is adjacent to $v$. Thus
$\{\Pi_{\tau'}(i), \Pi_{\tau'}(k+1)\} \in E_{\tau'}$, and so $c_u = 1$. The number of
such $\chi$ is $[F_u](G,\mathbf w, v)$.
\end{enumerate}
This proves the desired relation.
\end{proof}

We now state and prove two key uniqueness properties enjoyed by the notion of
extension.

\begin{lemma}
\label{lem:unique-extend}
Let $a \geq b > 0$ be integers.
Let $\mathbf w \in V_G^k$. Let $u \in V_G \setminus \{ w_1, \ldots, w_k\}$.
Let $\tau = \type_G(\mathbf w)$ and $\tau' = \type_G(\mathbf w, u)$.
Let $f = \freq_G^a(\mathbf w)$. Then $\freq_{G}^b(\mathbf w, u)$ is the 
unique $f' \in \Z_q^{\Conn_{k+1}^b}$ such that:
\begin{itemize}
\item for each $H \in \Conn_{k+1}^b$ that is dependent on label $k+1$,
we have $f'_{H} = [H]_q(G,\mathbf w, u)$.
\item $(\tau', f')$ extends $(\tau,f)$.
\end{itemize}
\end{lemma}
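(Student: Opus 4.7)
The plan is to establish existence and uniqueness separately. For existence, the natural candidate is $f' = \freq_G^b(\mathbf w, u)$: the first bullet then holds by the definition of $\freq_G^b(\mathbf w, u)$, while the second bullet---that $(\tau', f')$ extends $(\tau, f)$---is precisely the content of Lemma~\ref{lem:ffreqcrux} applied to the tuple $(\mathbf w, u)$.

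For uniqueness, suppose $f', g' \in \Z_q^{\Conn_{k+1}^b}$ both satisfy the two bullets; I would show $f' = g'$ coordinate by coordinate. Any $H \in \Conn_{k+1}^b$ is either dependent on label $k+1$, in which case $f'_H = g'_H = [H]_q(G, \mathbf w, u)$ directly by the first bullet, or else $H(k+1)$ is an isolated vertex of $H$, forcing $H = \widetilde F$ for a unique $F \in \Conn_k^b$ with the same number of unlabelled vertices. For these remaining coordinates I would induct on $s = |V_F \setminus \mathcal L(F)|$ to show $f'_{\widetilde F} = g'_{\widetilde F}$.

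The inductive machinery uses \eqref{eq:freqextend}. Since $u \notin \{w_1,\ldots,w_k\}$ forces $\{k+1\} \in \Pi_{\tau'}$, the extension equation applies in its second form and may be rearranged into
$$f'_{\widetilde F} \;=\; f_F \;-\; \sum_{v \in V_F \setminus \mathcal L(F)} c_v\, \delta_{F_v}(f'),$$
and similarly for $g'$. In the base case $s = 0$ the sum is empty and both sides equal $f_F$. In the inductive step, $F_v$ has exactly $s-1$ unlabelled vertices, so Lemma~\ref{lem:kgraphtoconnected} tells us that $\delta_{F_v}$ is a polynomial in the coordinates indexed by $\Conn_{k+1}^{s-1}$; every such coordinate $H$ is either dependent on label $k+1$ (so $f'_H = g'_H$ by the first bullet) or of the form $\widetilde{F'}$ with $F' \in \Conn_k^{s-1}$ (so $f'_H = g'_H$ by the inductive hypothesis). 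Therefore $\delta_{F_v}(f') = \delta_{F_v}(g')$, and the rearranged formula gives $f'_{\widetilde F} = g'_{\widetilde F}$, closing the induction.

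The main point that requires care is the bookkeeping that \eqref{eq:freqextend} can actually be read as an explicit formula for the ``new'' coordinate $f'_{\widetilde F}$ in terms of coordinates whose label-connected part has strictly fewer unlabelled vertices. The key fact enabling this is that relabelling any $v \in V_F \setminus \mathcal L(F)$ as $k+1$ drops the unlabelled count by one, so $\delta_{F_v}$ probes coordinates indexed by $\Conn_{k+1}^{s-1}$ rather than $\Conn_{k+1}^{s}$; once this is observed the induction on $s$ is straightforward and everything else reduces to unpacking the definitions of extension, of $\widetilde{\,\cdot\,}$, and of dependence on a label.
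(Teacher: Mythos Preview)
Your proposal is correct and follows essentially the same approach as the paper's proof: existence via Lemma~\ref{lem:ffreqcrux}, and uniqueness by induction on the number of unlabelled vertices of $H=\widetilde F$, using the rearranged form of \eqref{eq:freqextend} to express $f'_{\widetilde F}$ in terms of $f_F$ and coordinates of $f'$ indexed by $\Conn_{k+1}^{s-1}$. The only cosmetic difference is that the paper compares an arbitrary $f'$ directly to $\freq_G^b(\mathbf w,u)$ rather than to a second generic $g'$, but the inductive mechanism is identical.
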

\begin{proof}
By Lemma~\ref{lem:ffreqcrux}, the vector $\freq_G^b(\mathbf w, u)$ is such an $f'$.

To prove uniqueness, it suffices to show that any $f'$ satisying these two properties
equals $\freq_G^b(\mathbf w, u)$. Thus it suffices to show that for any $H \in \Conn_{k+1}^b$ not dependent
on label $k+1$, $f'_H = (\freq_{G}^{b}(\mathbf w, u))_H$.

We prove this by induction on $|V_H \setminus \mathcal L(H)|$.
Let $H \in \Conn_{k+1}^b$ not dependent on label $k+1$. Thus
$H$ is of the form $\widetilde{F}$ for some graph $F \in \Conn_{k}^b$
(as in the previous lemma, for a $[k]$-labelled graph $F$,
we let $\widetilde{F}$ be the $[k+1]$-labelled graph obtained by
adjoining an isolated vertex labelled $k+1$ to $F$).
By Equation~\eqref{eq:freqextend}, we see that $f'_H$ is
{\em uniquely} determined by $\tau$, $\tau'$,
$f_F$ and the numbers $(f'_{H'})_{H' \in \Conn_{k+1}^{|V_H \setminus \mathcal L(H)| - 1}}$ 
(since each $c_u$ is determined by $\tau'$ and each of the graphs $F_u$ have
$|F_u \setminus \mathcal L(F_u)| \leq |V_H \setminus \mathcal L(H)| - 1$). By
induction hypothesis, all the $f'_{H'} = (\freq_G^b(\mathbf w, u))_{H'}$. Thus,
since $\freq_G^b(\mathbf w, u)$ also satisfies Equation \eqref{eq:freqextend},
we have $f'_H = (\freq_G^b(\mathbf w, u))_H$, as required.
\end{proof}

\begin{lemma}
\label{lem:self-fulfill}
Let $a \geq b > 0$ be integers.
Let $(\tau, f) \in \Types(k) \times \Freq(\tau, [k], a)$.
Let $\tau' \in \Types(k+1)$ extend $\tau$ with $\{k+1\} \not\in \Pi_{\tau'}$.
Then there is at most one $f' \in \Freq(\tau', [k+1], b)$ such that $(\tau', f')$ extends $(\tau, f)$.
\end{lemma}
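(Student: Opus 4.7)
The plan is to prove uniqueness by showing that the value $f'_H$ is forced for every $H \in \Conn_{k+1}^b$, purely from $(\tau, \tau', f)$ together with the feasibility and extension conditions. I will split into the two cases that arise in the definition of ``extends.''

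Since $H$ is label-connected, if $H(k+1)$ is an isolated vertex then $H$ has the form $\widetilde{F}$ for a unique $F \in \Conn_k^b$. In this case equation \eqref{eq:self} immediately gives $f'_H = f_F$, which is determined by $f$ alone (using $a \geq b$ so that $F \in \Conn_k^a$). The substantive case is when $H$ is dependent on label $k+1$, and here I will use the hypothesis $\{k+1\} \notin \Pi_{\tau'}$. Pick any $j \in [k]$ with $\Pi_{\tau'}(j) = \Pi_{\tau'}(k+1)$ and construct $F$ from $H$ by (i) merging the vertex $H(k+1)$ into $H(j)$, deleting duplicate edges, and (ii) dropping the label $k+1$. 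Because the labelled vertices of $H$ form an independent set, the merged vertex $F(j)$ is adjacent only to unlabelled vertices, so $F$ is a valid $[k]$-labelled graph; and because the merging does not touch the unlabelled vertex set nor the subgraph induced on it, $F$ is still label-connected with at most $b$ unlabelled vertices, so $F \in \Conn_k^b$.

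Next I will verify that $\widetilde{F}$ and $H$ are $\Pi_{\tau'}$-equivalent by unwinding the two quotients: the block $\Pi_{\tau'}(k+1) = \Pi_\tau(j) \cup \{k+1\}$ collapses, in both $H/\Pi_{\tau'}$ and $\widetilde{F}/\Pi_{\tau'}$, all of $H(j)$, $H(k+1)$, and the other $H(i)$ with $i \in \Pi_\tau(j)$ into a single vertex labelled $\Pi_{\tau'}(k+1)$; the isolated $k+1$-labelled vertex of $\widetilde{F}$ contributes nothing to adjacencies, and all other blocks and all unlabelled vertices are treated identically, so $\widetilde{F}/\Pi_{\tau'} \cong H/\Pi_{\tau'}$. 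Property (b) in the definition of $\Freq(\tau', [k+1], b)$ then forces $f'_H = f'_{\widetilde{F}}$, and \eqref{eq:self} forces $f'_{\widetilde{F}} = f_F$, yielding uniqueness of $f'_H$.

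The only delicate point is the verification of $\Pi_{\tau'}$-equivalence of $H$ and $\widetilde{F}$; everything else is definition chasing. I do not foresee a real obstacle, since the construction of $F$ is essentially dictated by the requirement $\widetilde{F}/\Pi_{\tau'} \cong H/\Pi_{\tau'}$, and the hypothesis $\{k+1\} \notin \Pi_{\tau'}$ is precisely what makes such an $F$ available (if $\{k+1\}$ were its own block, there would be no $j$ to merge into, and one would expect many inequivalent $f'$ to extend $(\tau, f)$, consistent with the fact that the lemma's statement excludes that case).
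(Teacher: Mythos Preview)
Your proposal is correct and follows essentially the same approach as the paper. Both arguments pick $j\in[k]$ with $\Pi_{\tau'}(j)=\Pi_{\tau'}(k+1)$, transfer all edges incident to $H(k+1)$ over to $H(j)$ so that vertex $k+1$ becomes isolated (the paper calls the result $H^*$, you build the equivalent $F$ and then $\widetilde F$), and then invoke $\Pi_{\tau'}$-equivalence together with property~(b) of $\Freq(\tau',[k+1],b)$ to force $f'_H=f'_{\widetilde F}=f_F$. One small wording remark: your opening phrase ``the two cases that arise in the definition of `extends'\,'' is slightly misleading, since under the hypothesis $\{k+1\}\notin\Pi_{\tau'}$ only equation~\eqref{eq:self} applies; your actual case split is on whether $H(k+1)$ is isolated, which is fine (and the paper simply handles both sub-cases uniformly).
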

\begin{proof}
As in the previous lemma, for a $[k]$-labelled graph $F$,
we let $\widetilde{F}$ be the $[k+1]$-labelled graph obtained by
adjoining an isolated vertex labelled $k+1$ to $F$.
For any $F \in \Conn_{k}^b$, we must have $f'_{\widetilde{F}} = f_{F}$.
Now we claim that any $H \in \Conn_{k}^b$ is $\Pi$-equivalent to some
graph of the form $\widetilde{F}$. To prove this, let $j \in [k]$ be such that
$\Pi_{\tau'}(j) = \Pi_{\tau'}(k+1)$. Let $H^*$ be the graph obtained from $H$ by
adding, for each neighbor $u$ of $H(k+1)$, an edge between $u$ and
the $H(j)$, and then removing (a) all edges incident on $H(k+1)$, and (b) any duplicate edges
introduced.
By construction, $H/\Pi_{\tau'} \cong H^*/\Pi_{\tau'}$, and so $f'_{H} = f'_{H^*}$ by Equation
~\eqref{eq:partitionedcop}.
In addition, the $H^*(k+1)$ is isolated, and hence $H^*$ is of the form
$\widetilde{F}$ for some $F \in \Conn_k^{b}$.

What we have shown is that for every $H \in \Conn_{k+1}^b$, $f'_{H}$ is forced to equal
$f_F$ for some $F \in \Conn_{k}^{b}$. This implies that $f'$ is specified uniquely.
\end{proof}

Finally, we will need to deal with random graphs $G(n,p)$
with some of the edges already exposed. The next definition captures this object.

\begin{definition}[Conditioned Random Graph]
Let $A = (V_A, E_A)$ be a graph with $V_A \subseteq [n]$.
We define the {\em conditioned random graph} $G(n,p\mid V_A, E_A)$ to be the graph $G = (V_G, E_G)$ with
$V_G = [n]$ and  $E_G = E_A \cup E'$, where each $\{i,j\} \in {[n] \choose 2}\setminus {V_A \choose 2}$ is
included in $E'$ independently with probability $p$.
\end{definition}

We can now state the main technical theorem
that describes the distribution of labelled subgraph
frequencies, and will eventually be useful for eliminating
$\exists$ quantifiers.

\begin{theorem}
\label{thm:superindep}
Let $a \geq b$ be positive integers.
Let $A$ be a graph with $V_A \subseteq [n]$ and $|V_A| \leq n' \leq n/2$. Let $G \in G(n,p \mid V_A, E_A)$. Let $\mathbf w = (w_1, \ldots, w_k) \in V_A^k$, and let $u_1, \ldots, u_s \in V_A\setminus\{w_1, \ldots, w_k\}$ be distinct. Let $\tau = \type_G(\mathbf w)$ and let $\tau_i = \type_G(\mathbf w, u_i)$ (note
that $\tau, \tau_1,\ldots, \tau_s$ are already determined by $E_A$).
Let $f$ denote the random variable $\freq_{G}^a(\mathbf w)$.
Let $f_i$ denote the random variable $\freq_{G}^b(\mathbf w, u_i)$.

Then, there exists a constant $\rho = \rho(a, q, p) > 0$, such that if
$s \leq \rho \cdot n$, then the distribution of $(f, f_1, \ldots, f_s)$ over $\Freq_n(\tau, [k], a) \times \prod_{i} \Freq_n(\tau_i, [k+1], b)$
is $2^{-\Omega(n)}$-close to the distribution of $(h, h_1, \ldots, h_s)$ generated as follows:
\begin{enumerate}
\item $h$ is picked uniformly at random from $\Freq_n(\tau, [k], a)$.
\item For each $i$, each $h_i$ is picked independently and uniformly from the set of all $f' \in \Freq_n(\tau_i, [k+1], b)$
such that $(\tau_i, f')$ extends $(\tau, h)$.
\end{enumerate}
\end{theorem}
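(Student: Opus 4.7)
By Lemma~\ref{lem:ffreqcrux} together with the defining constraints of the sets $\Freq_n$, the random vector $(f, f_1, \ldots, f_s)$ always takes values in
\[
\mathcal S := \bigl\{(h, h_1, \ldots, h_s): h \in \Freq_n(\tau, [k], a),\ h_i \in \Freq_n(\tau_i, [k+1], b),\ (\tau_i, h_i)\text{ extends }(\tau, h) \text{ for all } i\bigr\}.
\]
Using Lemma~\ref{lem:unique-extend} and Lemma~\ref{lem:self-fulfill}, specifying $h$ together with the coordinates $(h_i)_H$ for $H \in \Conn_{k+1}^b$ dependent on label $k+1$ determines the entire tuple, and the divisibility/$\Pi$-equivalence conditions give $\Z_q$-linear constraints. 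Thus $\mathcal S$ is (after a translation) a coset of a subgroup $\mathcal G$ of the ambient $\Z_q$-module, and $\mathcal G$ has bounded rank over $\Z_q$. The plan is to invoke the Vazirani XOR lemma on $\mathcal G$: for every nontrivial character $\omega^{L}$ of $\mathcal G$, pulled back to a $\Z_q$-linear functional $L$ on the ambient module, it suffices to show
\[
\bigl|\E\bigl[\omega^{L(f, f_1, \ldots, f_s)}\bigr]\bigr| \le 2^{-\Omega(n)}.
\]

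Each coordinate of $f$ (and of each $f_i$) expands as a $\Z_q$-valued polynomial of degree at most $\max(a,b)$ in the edge indicators $\{z_e\}$,
\[
[F]_q(G, \mathbf w) = \sum_{E \in \Cop(F, (\K_n, \mathbf w))}\prod_{e \in E} z_e,
\]
and similarly for the labelled versions rooted at $(\mathbf w, u_i)$. The variables $z_e$ for $e \in \binom{V_A}{2}$ are constants determined by $E_A$, while the remaining $z_e$ are independent $p$-biased random variables. After substitution, $L(f, f_1, \ldots, f_s) = Q(\mathbf z)$ for a polynomial $Q$ of degree at most $\max(a,b)$, and the goal becomes to apply Lemma~\ref{lem:polybias1} to $Q$.

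To verify the hypothesis of Lemma~\ref{lem:polybias1}, let $d^*$ denote the largest number of edges of any label-connected graph $F^* \in \Conn_k^a$ (rooted at $\mathbf w$) or $H^* \in \Conn_{k+1}^b$ that is dependent on label $k+1$ (rooted at $(\mathbf w, u_{i^*})$ for some $i^*$) whose coefficient in $L$ survives modulo the relations defining $\mathcal G$. Pick such a graph. Greedily find $r = \Omega(n)$ injective homomorphisms $\chi_1, \ldots, \chi_r$ of this graph into $\K_n$ respecting the root assignment, whose non-labelled vertex images lie in $[n]\setminus V_A$, avoid the other $u_j$'s, and are pairwise disjoint. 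Such a collection exists because $|V_A|\le n/2$ and $s \le \rho n$ for a sufficiently small constant $\rho$. Set $E_j := \chi_j(E_{F^*})$ (or $\chi_j(E_{H^*})$). Each $E_j$ has exactly $d^*$ edges, and the coefficient of $\prod_{e \in E_j} z_e$ in $Q$ equals the nonzero coefficient of $F^*$ (resp.\ $H^*$) in $L$ since copies of distinct labelled graphs give disjoint edge sets. Label-connectedness of $F^*$ together with the vertex-disjointness of the $\chi_j$ ensures that for any other degree-$d^*$ monomial $S$ of $Q$, if $S \subseteq \cup_j E_j$ then $S$'s non-root vertices (connected after removing the roots) must lie inside a single block $V(\chi_j)$, forcing $S = E_j$, a contradiction. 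Hence Lemma~\ref{lem:polybias1} applies and yields $|\E[\omega^{Q(\mathbf z)}]| \le 2^{-\Omega(n)}$, from which the theorem follows by XOR.

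The main obstacle is making rigorous the phrase ``nonzero coefficient modulo the relations of $\mathcal G$.'' Choosing the correct basis for $\mathcal G$---so that a nontrivial character on $\mathcal G$ necessarily corresponds to a test linear combination whose top-degree monomials come from \emph{label-connected} graphs---requires Lemma~\ref{lem:kgraphtoconnected} (to express arbitrary labelled graph counts as polynomials in label-connected ones) together with Lemma~\ref{lem:unique-extend} and Lemma~\ref{lem:self-fulfill} (to see that the extension relation exactly cancels out the non-dependent coordinates of each $f_i$). The rooted case, where $d^*$ is achieved by some $H^*$ at a root $u_{i^*}$, is the more delicate one: the copies of $H^*$ must be packed while avoiding all other $u_j$, which is precisely why the hypothesis $s \le \rho n$ is needed.
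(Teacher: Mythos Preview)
Your overall strategy---XOR lemma plus Lemma~\ref{lem:polybias1} applied to the polynomial in the edge indicators---is exactly the engine the paper uses. But there is a genuine gap in your execution: the set $\mathcal S$ is \emph{not} a coset of a subgroup. The extension relation~\eqref{eq:freqextend} reads $f_F = f'_{\widetilde F} + \sum_u c_u\,\delta_{F_u}(f')$, and the $\delta_{F_u}$ from Lemma~\ref{lem:kgraphtoconnected} are honest polynomials (e.g.\ when $F$ is a path $w_1\text{--}x\text{--}y\text{--}z\text{--}w_2$ and $u=y$, the graph $F_u$ is label-disconnected, so $\delta_{F_u}$ involves a product of label-connected counts). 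So $\mathcal S$ is the graph of a polynomial map, not an affine subspace, and the Vazirani XOR lemma does not apply to it directly. This also undercuts your monomial-coefficient bookkeeping: when you expand $[F^*]_q(G,\mathbf w)$ in edge variables, a single edge-set $E$ is hit by $\aut(F^*/\Pi_\tau)$ injective homomorphisms, and every $\Pi_\tau$-equivalent $F'$ contributes to the same monomial, so ``the coefficient of $\prod_{e\in E_j} z_e$ equals the $L$-coefficient of $F^*$'' is not literally true.

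The paper fixes both issues with one move: pass to the quotient $\mathbf v=\mathbf w/\Pi_\tau$ (so labels become distinct) and work with \emph{copy} counts $\langle F\rangle_q$ rather than $[F]_q$. Then distinct label-connected graphs have disjoint copy-sets (Proposition~\ref{remark:konn}), each monomial appears with coefficient~$1$, and the vector of copy-counts $\bigl((\langle F_i\rangle_q)_{i},(\langle H_{i'}\rangle_q(\cdot,u_{j'}))_{i',j'}\bigr)$ ranges over the \emph{full} group $\Z_q^{\ell+\ell's}$. Theorem~\ref{thm:indep-subgraphs} proves this vector is $2^{-\Omega(n)}$-close to uniform via exactly the XOR$+$Lemma~\ref{lem:polybias1} argument you outlined. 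The short proof of Theorem~\ref{thm:superindep} then just observes that the map from copy-counts to $(f,f_1,\dots,f_s)$---multiply by $\aut$, reindex by $\Pi_\tau$-equivalence, and fill in the non-dependent coordinates via Lemma~\ref{lem:unique-extend}---is a bijection onto $\mathcal S$, pushing uniform to the target distribution. In other words, the paper finds a group parametrization of $\mathcal S$ first and runs your argument on the parameters; you were trying to run it on $\mathcal S$ itself.
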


\subsection{Proof of Theorem~\ref{thm:foptopoly}}

We now prove Theorem~\ref{thm:foptopoly}, where the main quantifier
elimination step is carried out.

%\label{thm:foptopoly}
{\bf Theorem~\ref{thm:foptopoly} (restated)} {\it
For every prime $q$ and integers $k, t > 0$, there is a constant $c = c(k, t, q)$ such that for every $\FOM$ formula $\varphi(\alpha_1, \ldots, \alpha_k)$ with quantifier depth $t$, there is a function
$\psi : \Types(k) \times \Z_q^{\Conn_{k}^{c}} \rightarrow \{0, 1\}$ such that for all $p \in (0,1)$, the quantity
\begin{align*}
\Pr_{G \in G(n,p)}
\left[
\begin{array}{c}
\forall w_1, \ldots, w_k \in V_G, \quad\quad\quad\quad\quad\quad\quad\quad\quad\quad\quad\quad\quad\quad\quad\quad\quad\quad\\
\quad\quad(G \mbox{ satisfies }\varphi(w_1, \ldots, w_k)) \Leftrightarrow (\psi(\type_G(\mathbf w), \freq^c_G(\mathbf w)) = 1)
\end{array}
\right]
\geq 1 - 2^{-\Omega(n)}.
\end{align*}
}
%For any prime $q$ and any integers $k, t > 0$, there is a constant $c = c(k, t, q)$ such that for any  $\FOM$ formula $\varphi(\alpha_1, \ldots, \alpha_k)$ with quantifier depth $t$, there is a function
%$\psi : \Types(k) \times \Z_q^{\Conn_{k}^{c}} \rightarrow \{0, 1\}$ such that for any $p \in (0,1)$,
%$$\Pr_{G \in G(n,p)}[\forall w_1, \ldots, w_k \in V_G, (\varphi(w_1/\alpha_1, \ldots, w_k/\alpha_k) \Leftrightarrow \psi(\type_G(\mathbf w), \freq^c_G(\mathbf w)) = 1)] \geq 1 - 2^{-\Omega(n)}.$$
%}
\begin{proof}
The proof is by induction on the size of the formula.
If $\varphi(w_1, \ldots, w_k)$ is an atomic formula, then trivially there exists a $\psi: \Types(k) \rightarrow \{0,1\}$ such that for every graph $G$ and every $\mathbf w \in V_G^k$, the statement $\varphi(w_1, \ldots, w_k)$ holds if and only if $\psi(\type_G(\mathbf w)) = 1$. Thus we may take $c(k, 0, q) = 0$. We will show that one may take
$c(k, t, q) = (q-1)c(k+1, t-1, q) \cdot 2^{c(k+1, t-1, q)^2}$.

Now assume the result holds for all formulae smaller than $\varphi$.

{\bf Case $\wedge$:} Suppose $\varphi(\alpha_1, \ldots, \alpha_k) = \varphi_1(\alpha_1, \ldots, \alpha_k) \wedge \varphi_2(\alpha_1, \ldots, \alpha_k)$. By induction hypothesis,
we have functions $\psi_1, \psi_2$ and a constant $c$ such that
$\Pr_{G}[\forall w_1, \ldots, w_k \in V_G, (\varphi_1(w_1, \ldots, w_k) \Leftrightarrow \psi_1(\type_G(\mathbf w), \freq^c_G(\mathbf w)) = 1) ] \geq 1 -2^{-\Omega(n)}$ and $\Pr_{G}[\forall w_1, \ldots, w_k \in V_G, (\varphi_2(w_1, \ldots, w_k) \Leftrightarrow \psi_2(\type_G(\mathbf w), \freq^c_G(\mathbf w)) = 1) ] \geq 1 -2^{-\Omega(n)} $.
Setting $\psi(\tau, f) = \psi_1(\tau, f) \cdot \psi_2(\tau, f)$, it follows from the union bound that
$$ \Pr_{G}[\forall w_1, \ldots, w_k \in V_G, (\varphi(w_1, \ldots, w_k) \Leftrightarrow \psi(\type_G(\mathbf w), \freq^c_G(\mathbf w)) = 1) ] \geq 1 -2^{-\Omega(n)}.$$

{\bf Case $\neg$:} Suppose $\varphi(\alpha_1, \ldots, \alpha_k) = \neg \varphi'(\alpha_1, \ldots, \alpha_k)$. Let $\psi' : \Types(k) \times \Z_q^{\Conn_{k}^{c}} \rightarrow \{0, 1\}$ be such that
$ \Pr_{G}[\forall w_1, \ldots, w_k \in V_G, (\varphi'(w_1, \ldots, w_k) \Leftrightarrow \psi'(\type_G(\mathbf w), \freq^c_G(\mathbf w)) = 1) ] \geq 1 -2^{-\Omega(n)}.$
Setting $\psi(\tau, f) = 1- \psi'(\tau, f)$, we see that
$$ \Pr_{G}[\forall w_1, \ldots, w_k \in V_G, (\varphi(w_1, \ldots, w_k) \Leftrightarrow \psi(\type_G(\mathbf w), \freq^c_G(\mathbf w)) = 1) ] \geq 1 -2^{-\Omega(n)}.$$

{\bf Case $\Mod^i_q$:}
Suppose $\varphi(\alpha_1, \ldots, \alpha_k) = \Mod_q^i \beta, \varphi'(\alpha_1, \ldots, \alpha_k, \beta)$. Let $c' = c(k+1, t-1, q)$ and let $\psi' : \Types(k+1) \times \Z_q^{\Conn_{k+1}^{{c'}}} \rightarrow \{0, 1\}$ be given by the induction hypothesis, so that
$$ \Pr_{G}[\forall w_1, \ldots, w_k, v \in V_G, (\varphi'(w_1, \ldots, w_k, v) \Leftrightarrow \psi'(\type_G(\mathbf w, v), \freq^{c'}_G(\mathbf w, v)) = 1) ] \geq 1 -2^{-\Omega(n)}.$$

Call $G$ {\em good} if this event occurs, i.e., if
$$\forall w_1, \ldots, w_k, v \in V_G, (\varphi'(w_1, \ldots, w_k, v) \Leftrightarrow \psi'(\type_G(\mathbf w, v), \freq^{c'}_G(\mathbf w, v)) = 1).$$
Let $\gamma(w_1, \ldots, w_k)$ be the number (mod $q$) of $v$ such that $\varphi'(w_1, \ldots, w_k, v)$ is true.
Then for any good $G$ (doing arithmetic mod $q$), $$\gamma(w_1, \ldots, w_k)  = \modsum_{v\in V_G} \psi'(\type_G(\mathbf w, v), \freq^{c'}_G(\mathbf w, v)).$$
Grouping terms, we have
\begin{align*}
\gamma(w_1, \ldots, w_k) &= \modsum_{\tau' \in \Types(k+1)}\modsum_{f' \in \Z_q^{\Conn_{k+1}^{c'}}} \psi'(\tau', f') \cdot |\{ v \in V_G: \type_G(\mathbf w, v) = \tau') \wedge \freq_{G}(\mathbf w, v) = f'\}|\\
&= \modsum_{\tau', f'} \psi'(\tau', f')\cdot \lambda(\tau', f', \type_G(\mathbf w), \freq^{c}_{G}(\mathbf w))\\
&\mbox{ \quad\quad (applying Theorem~\ref{thm:extendcount}, and taking $c = (q-1)c'2^{(c')^2}$)}
\end{align*}
which is solely a function of $\type_G(\mathbf w)$ and
$\freq^{c}_G(\mathbf w)$. Thus, there is a function $\psi: \Types(k) \times \Z_q^{\Conn_k^{c}} \rightarrow \{0,1\}$ such that for all good $G$ and for all $w_1, \ldots, w_k \in V_G$,  $\psi(\type_G(\mathbf w), \freq_G^{c}(\mathbf w)) = 1$ if and only if $\gamma(\mathbf w) \equiv i\mod q$. Thus,
$$ \Pr_{G}[\forall w_1, \ldots, w_k, ((\Mod_q^i v, \varphi'(w_1, \ldots, w_k, v)) \Leftrightarrow \psi(\type_G(\mathbf w), \freq^{c}_G(\mathbf w)) = 1) ] \geq 1 -2^{-\Omega(n)},$$
as desired.

{\bf Case $\exists$:}
Suppose $\varphi(\alpha_1, \ldots, \alpha_k) = \exists \beta, \varphi'(\alpha_1, \ldots, \alpha_k, \beta)$. Let $c' = c(k+1, t-1, q)$ and
let $\psi' : \Types(k+1) \times \Z_q^{\Conn_{k+1}^{c'}} \rightarrow \{0, 1\}$ be such that
\begin{equation}
\label{eqexistscase}
 \Pr_{G}[\forall w_1, \ldots, w_k, v \in V_G, (\varphi'(w_1, \ldots, w_k, v) \Leftrightarrow \psi'(\type_G(\mathbf w, v), \freq_G(\mathbf w, v)) = 1) ] \geq 1 -2^{-\Omega(n)}.
\end{equation}

For this case, we may choose $c$ to be any integer at least $c'$.
Define $\psi: \Types(k) \times \Z_q^{\Conn_{k}^{c}} \rightarrow \{0,1\}$ by the rule:
$\psi(\tau, f) = 1$ if there is a $(\tau', f') \in \Types(k+1) \times \Freq_n(\tau', [k+1], c')$ extending
$(\tau, f)$ such that $\psi'(\tau', f') = 1$. 

Fix any $\mathbf w \in [n]^k$. We will show that
\begin{equation}
\label{eqexiststoshow}
\Pr_G[ (\exists v, \psi' (\type_G(\mathbf w, v), \freq_G^{c'}(\mathbf w, v)) = 1) \Leftrightarrow \psi(\type_G(\mathbf w), \freq_G^{c}(\mathbf w)) = 1] \geq 1 - 2^{-\Omega(n)}.
\end{equation}
Taking a union bound of \eqref{eqexiststoshow} over all $\mathbf w \in [n]^k$, and using Equation~\eqref{eqexistscase}, we conclude that
$$\Pr_{G \in G(n,p)}[\forall w_1, \ldots, w_k \in V_G, (\varphi(w_1, \ldots, w_k) \Leftrightarrow \psi(\type_G(\mathbf w), \freq_G(\mathbf w)) = 1)] \geq 1 - 2^{-\Omega(n)},$$
as desired.

It remains to show Equation~\eqref{eqexiststoshow}.
It will help to expose the edges of the random graph $G$ in three stages.

In the first stage, we expose all the edges between the vertices in $\{w_1, \ldots, w_k\}$.

For the second stage, let $s = \rho(c, q, p) \cdot n$ (where $\rho$ comes from Theorem~\ref{thm:superindep}) and pick distinct vertices $u_1, \ldots, u_s \in [n]\setminus\{w_1, \ldots, w_k\}$.
In the second stage, we expose all the unexposed edges between the vertices in $\{w_1, \ldots, w_k, u_1, \ldots, u_s\}$
(i.e., the edges between $u_i$s and $w_j$s, as well as the edges between the $u_i$s and $u_j$s).
Denote the resulting graph induced on $\{w_1, \ldots, w_k, u_1, \ldots, u_s\}$ after the second stage by
$A$ (so that $V_A = \{w_1, \ldots w_k, u_1, \ldots, u_s\}$).

In the third stage, we expose the rest of the edges in $G$.
Thus $G$ is sampled from $G(n, p \mid V_A, E_A)$.

Let $\tau$ denote the random variable $\type_G(\mathbf w)$.
Note that $\tau$ is determined after the first stage.
Let $\tau_1, \ldots, \tau_s$ denote the random variables $\type_G(\mathbf w, u_1),
\ldots, \type_G(\mathbf w, u_s)$. Note that $\tau_1, \ldots, \tau_s$ are
all determined after the second stage. Let $f$ denote the random variable $\freq_G(\mathbf w)$.
Let $f_1, \ldots, f_s$ denote the random variables $\freq_G(\mathbf w, u_1),
\ldots, \freq_G(\mathbf w, u_s)$. The variables $f, f_1, \ldots, f_s$ are all determined
after the third stage. Notice that the content of Theorem~\ref{thm:superindep}
is precisely a description of the distribution of $(f, f_1, \ldots, f_s)$.

We identify two bad events $B_1$ and $B_2$.
%and argue that (a) they each
%occur with exponentially small probability, and (b) in their absence,
%we have
%$$(\exists v, \psi' (\type_G(\mathbf w, v), \freq_G^{t(p)}(\mathbf w, v))) \Leftrightarrow \psi(\type_G(\mathbf w), \freq_G^{t(p-1)}(\mathbf w)) = 1.$$

$B_1$ is defined to be the event: there exists $\sigma \in \Types(k+1)$ extending $\tau$,
with $\{k+1\} \in \Pi_{\sigma}$ (ie, types $\sigma$ where vertex $k+1$ is distinct from the
other vertices), such that $$|\{ i \in [s] : \tau_i = \sigma \}| \leq \frac12s\min\{p^k, (1-p)^k\}.$$
(This can be interpreted as saying that the type $\sigma$ appears abnormally infrequently amongst
the $\tau_i$).
Note that for any $\sigma$ extending $\tau$, the events ``$\tau_i = \sigma$", for $i \in [s]$,
% and $\tau_j = \sigma$
are independent conditioned on the outcome of the first stage, since they depend on disjoint
sets of edges of $G$.
Also, for each $i$ and each $\sigma$ extending $\tau$ with $\{k+1\} \in \Pi_{\sigma}$, the probability that $\tau_i = \sigma$ is $\geq \min\{p^k, (1-p)^k\}$.
%(since each edge between $u_i$ and $w_j$ appears independently with probability at least $\min\{p, 1-p\}$).
Therefore, applying the Chernoff bound, and taking a union bound over all $\sigma$ extending $\tau$ with $\{k+1\} \in \Pi_{\sigma}$, we see that
$$\Pr[B_1] \leq 2^k \exp(-s \min\{p^k,(1-p)^k\}) \leq 2^{-\Omega(n)}.$$

Now let
\begin{align*}
S = \{(\sigma, g) \in &\  \Types(k+1) \times \Freq_n(\sigma, [k+1], c') \mid \{k+1\} \in \Pi_{\sigma} \\
&\mbox{ AND } (\sigma,g) \mbox{ extends } (\tau, f)
 \mbox{ AND } \psi'(\sigma, g) = 1\}.
\end{align*}
$B_2$ is defined to be the event: $S \neq \emptyset$ and for each $i \in [s]$, $(\tau_i, f_i) \not\in S$.
We study the probability of $\neg B_1 \wedge B_2$.
Let $U$ be the set of $(d,  d_1, \ldots, d_s) \in \Freq_n(\tau, [k], c) \times \prod_{i}\Freq_n(\tau_i, [k+1], c')$ such that
\begin{enumerate}
\item The set $S(d)$ defined by
\begin{align*}
S(d) = \{(\sigma, g) \in &\ \Types(k+1) \times \Freq_n(\sigma, [k+1], c') \mid \{k+1\} \in \Pi_{\sigma}\\
&\mbox{ AND } (\sigma,g) \mbox{ extends } (\tau, d) \mbox{ AND } \psi'(\sigma, g) = 1\},
\end{align*}
is nonempty.
\item For each $i \in [s]$, $(\tau_i, d_i) \not\in S(d)$.
\end{enumerate}
By definition, the event $B_2$ occurs precisely when $(f, f_1, \ldots, f_s) \in U$.

By Theorem~\ref{thm:superindep}, for any fixing of $E_A$, the probability that $(f, f_1, \ldots, f_s) \in U$ is at most $2^{-\Omega(n)}$ more than the probability that $(h, h_1, \ldots, h_s) \in U$. As the event $B_1$ is solely a function
of $E_A$, we conclude that $\Pr[\neg B_1 \wedge (f, f_1, \ldots, f_s) \in U)] \leq \Pr[\neg B_1 \wedge(h, h_1, \ldots, h_s) \in U] + 2^{-\Omega(n)}$.

It remains to bound $\Pr[\neg B_1 \wedge(h, h_1, \ldots, h_s) \in U]$. If $S(h) \neq \emptyset$, take a
$(\sigma, g) \in S(h)$. In the absence of $B_1$, the number of $i\in[s]$ with $\tau_i = \sigma$ is at least
$\frac12 s \min\{p^k, (1-p)^k\}$. For all these $i$, it must hold that $h_i \neq g$ in order for
$(h, h_1, \ldots, h_s)$ to lie in $U$. Therefore,
$$\Pr[\neg B_1 \wedge (h, h_1, \ldots, h_s)\in U] \leq \left(1 - \frac{1}{|\Freq_n(\tau, k+1, c')|}\right)^{\frac12 s \min\{p^k, (1-p)^k\}}.$$
Notice that this last quantity is of the form $2^{-\Omega_{p, q, k, d}(s)}$.

Putting everything together,
$$\Pr[\neg B_1 \wedge B_2] \leq \Pr[\neg B_1 \wedge (h, h_1, \ldots, h_s) \in U] + 2^{-\Omega(n)} \leq  2^{-\Omega(s)} + 2^{-\Omega(n)} \leq 2^{-\Omega(n)}.$$

Therefore, with probability at least $1 - 2^{-\Omega(n)}$, the event $B_2$ does not occur.
The next claim finishes the proof of Equation~\eqref{eqexiststoshow}, and with that the proof of
Theorem~\ref{thm:foptopoly}.

\begin{claim}
If $B_2$ does not occur, then
$$(\exists v, \psi' (\type_G(\mathbf w, v), \freq_G^{c'}(\mathbf w, v)) = 1) \Leftrightarrow (\psi(\type_G(\mathbf w), \freq_G^{c}(\mathbf w)) = 1).$$
\end{claim}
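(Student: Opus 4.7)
The plan is to establish the biconditional by arguing each direction separately. The forward direction $(\Rightarrow)$ is almost immediate and does not use $\neg B_2$: given a witnessing vertex $v$, set $\tau' = \type_G(\mathbf w, v)$ and $f' = \freq_G^{c'}(\mathbf w, v)$. By Lemma~\ref{lem:ffreqcrux}, $(\tau', f')$ automatically extends $(\tau, f) = (\type_G(\mathbf w), \freq_G^{c}(\mathbf w))$, so $(\tau', f')$ is a valid extension witnessing $\psi'(\tau', f') = 1$, and thus $\psi(\tau, f) = 1$ directly from the definition of $\psi$.

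For the backward direction $(\Leftarrow)$, suppose $\psi(\tau, f) = 1$, so there exists $(\sigma, g) \in \Types(k+1) \times \Freq_n(\sigma, [k+1], c')$ extending $(\tau, f)$ with $\psi'(\sigma, g) = 1$. I plan to split into two cases depending on whether $\{k+1\} \in \Pi_\sigma$. In the first case ($\{k+1\} \in \Pi_\sigma$), the pair $(\sigma, g)$ lies in the set $S$ by its very definition, so $S \neq \emptyset$. Since the bad event $B_2$ does not occur, this forces the existence of some $i \in [s]$ with $(\tau_i, f_i) \in S$, which in particular means $\psi'(\tau_i, f_i) = 1$. Taking $v = u_i$ then gives $(\type_G(\mathbf w, v), \freq_G^{c'}(\mathbf w, v)) = (\tau_i, f_i)$ as the desired witness.

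In the second case ($\{k+1\} \notin \Pi_\sigma$), the label $k+1$ is grouped together with some $j \in [k]$ in the partition $\Pi_\sigma$. Here I would take $v = w_j$. A quick check confirms $\type_G(\mathbf w, w_j) = \sigma$ because $\sigma$ already extends $\tau$ and the extension records precisely the equality and adjacency relations forced by setting the new vertex equal to $w_j$. By Lemma~\ref{lem:ffreqcrux}, $(\sigma, \freq_G^{c'}(\mathbf w, w_j))$ extends $(\tau, f)$; but Lemma~\ref{lem:self-fulfill} tells us that when $\{k+1\} \notin \Pi_\sigma$, there is at most one such extension, so $\freq_G^{c'}(\mathbf w, w_j) = g$. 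Hence $\psi'(\sigma, \freq_G^{c'}(\mathbf w, w_j)) = \psi'(\sigma, g) = 1$, providing the witness.

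The main obstacle is the second case of the backward direction, where one must be sure that the abstract extension $(\sigma, g)$ guaranteed by $\psi(\tau, f) = 1$ actually matches the concrete frequencies realized by the graph at the repeated vertex $v = w_j$. This is exactly the content of the uniqueness statement in Lemma~\ref{lem:self-fulfill}, which is the reason that lemma was proved: without it, the abstract witness $g$ could in principle disagree with the actual frequency vector at $w_j$, breaking the argument. The case $\{k+1\} \in \Pi_\sigma$ is where $\neg B_2$ is essential, and the splitting into these two cases is precisely what makes the bad event $B_2$ the correct notion, since it only controls extensions of the first type.
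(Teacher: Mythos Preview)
Your proposal is correct and follows essentially the same approach as the paper's proof: the paper proves the contrapositive of your forward direction using Lemma~\ref{lem:ffreqcrux}, and for the backward direction it performs the same case split on whether $\{k+1\}\in\Pi_{\tau'}$, invoking Lemma~\ref{lem:self-fulfill} in the ``self-fulfilling'' case and $\neg B_2$ in the ``default'' case, exactly as you do. The only cosmetic difference is that the paper organizes the split as ``does \emph{some} witness have $\{k+1\}\notin\Pi_{\tau'}$?'' rather than splitting on a fixed chosen witness, but this makes no logical difference.
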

\begin{proof}
Let $\tau = \type_G(\mathbf w)$ and $f = \freq_G^{c}(\mathbf w)$.

If $\psi(\tau, f) = 0$, then we know that for all $(\tau', f') \in \Types(k+1) \times \Freq_n(\tau', k+1 , c')$ extending
$(\tau, f)$, we have $\psi'(\tau', f') = 0$. Thus by Lemma~\ref{lem:ffreqcrux}, for all $v \in V_G$,
$ \psi' (\type_G(\mathbf w, v), \freq_G^{c'}(\mathbf w, v)) = 0$, as required.

If $\psi(\tau, f) = 1$, then we consider two situations.
\begin{itemize}
\item {\bf The self-fulfilling situation:} If there is a $(\tau', f') \in \Types(k+1) \times \Freq_n(\tau', k+1, c')$ extending $(\tau, f)$ with
$\{k + 1\} \not\in \Pi_{\tau'}$ and $\psi'(\tau', f') = 1$. In this case, take any $j \in [k]$
with $\Pi_{\tau'}(j) = \Pi_{\tau'}(k+1)$, and let $v = w_j$.
Thus $\type_G(\mathbf w, v)= \tau'$.
By Lemma~\ref{lem:self-fulfill}, since $(\tau', f')$ extends $(\tau, f)$ with $\{k+1\} \not\in \Pi_{\tau'}$,
it follows that $\freq_G^{c'}(\mathbf w, v) = f'$.
Therefore, with this choice of $v$, we have
$\psi'(\type_G(\mathbf w, v), \freq_{G}^{c'}(\mathbf w, v)) = 1$, as required.
\item {\bf The default situation:} In this case, there is a $(\tau', f') \in \Types(k+1) \times \Freq_n(\tau', k+1, c')$ extending $(\tau, f)$
with $\{k+1\} \in \Pi_{\tau'}$ and  $\psi'(\tau', f') = 1$. This is precisely the statement that
$S \neq \emptyset$. Therefore, by the absence of the event $B_2$, there must be an $i \in [r]$
such that $(\tau_i, f_i) \in S$. Taking $v = u_i$, we see that
$\psi'(\type_G(\mathbf w, v), \freq_{G}^{c'}(\mathbf w, v)) = 1$, as required.
\end{itemize}
This completes the proof of the claim.
\end{proof}
\end{proof}

\section{ Counting Extensions}
\label{sec:extendcount}
In this section we prove Theorem~\ref{thm:extendcount}. 

\subsection{Subgraph frequency arithmetic}
We begin with a definition.
A {\em partial matching} between two
$I$-labelled graphs $F_1, F_2$ is a subset $\eta \subseteq (V_{F_1}\setminus \mathcal L(F_1)) \times (V_{F_2} \setminus \mathcal L(F_2))$ that is one-to-one.
For two graphs $F_1, F_2$, let $\pmat(F_1, F_2)$ be the set of all partial matchings between them.

\begin{definition}[Gluing along a partial matching] Let $F_1$ and $F_2$ be two $I$-labelled graphs, and let $\eta \in \pmat(F_1, F_2)$.
Define {\em the gluing of $F_1$ and $F_2$ along $\eta$}, denoted $F_1 \vee_{\eta} F_2$, to be the graph obtained by first
taking the disjoint union of $F_1$ and $F_2$, identifying pairs of vertices with the same label, and then
identifying the vertices in each pair of $\eta$ (and removing duplicate edges). We omit the
subscript when $\eta = \emptyset$.
\end{definition}

We have the following simple identity.

\begin{lemma}
\label{lem:productinj}
For any $I$-labelled graphs $F_1, F_2$, any graph $G$ and any $\mathbf w \in V_G^I$:
\begin{equation}
\label{kveeidentity}
[F_1](G, \mathbf w)\cdot[F_2](G, \mathbf w) = \sum_{ \eta \in \pmat(F_1, F_2)} [F_1 \vee_\eta F_2](G, \mathbf w).
\end{equation}
\end{lemma}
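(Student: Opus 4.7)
The plan is to exhibit a bijection between the two sets whose cardinalities appear on either side of~\eqref{kveeidentity}, namely between $\Inj(F_1,(G,\mathbf w))\times\Inj(F_2,(G,\mathbf w))$ on the left and $\bigsqcup_{\eta\in\pmat(F_1,F_2)}\Inj(F_1\vee_\eta F_2,(G,\mathbf w))$ on the right. In the forward direction, given injective homomorphisms $\chi_1$ and $\chi_2$, I would set
$$\eta = \{(v_1,v_2)\in (V_{F_1}\setminus\mathcal L(F_1))\times(V_{F_2}\setminus\mathcal L(F_2)) : \chi_1(v_1)=\chi_2(v_2)\},$$
which is a partial matching, since two pairs sharing a first coordinate would violate the labelled-injectivity of $\chi_1$ and similarly for second coordinates. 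Because $\chi_1$ and $\chi_2$ agree on all the labelled vertices (both send $F_j(i)$ to $w_i$) and agree on $\eta$ by construction, they glue to a well-defined map $\chi:V_{F_1\vee_\eta F_2}\to V_G$, which is a homomorphism since every edge of $F_1\vee_\eta F_2$ descends from an edge of $F_1$ or $F_2$.

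In the reverse direction, given $\eta$ and $\chi\in\Inj(F_1\vee_\eta F_2,(G,\mathbf w))$, I would restrict $\chi$ to the embedded copies of $V_{F_1}$ and $V_{F_2}$ inside the glued graph to obtain $\chi_1$ and $\chi_2$. The two constructions will be mutually inverse; the only nontrivial point is that applying the forward map to $(\chi_1,\chi_2)$ recovers exactly the original $\eta$, which follows because any pair $(v_1,v_2)$ of unlabelled vertices \emph{not} in $\eta$ remains distinct in $F_1\vee_\eta F_2$, and hence by injectivity of $\chi$ has $\chi_1(v_1)\neq\chi_2(v_2)$.

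The main obstacle is bookkeeping around the labelled-injectivity condition, which allows distinct labelled vertices to share an image when the corresponding $w_i$'s coincide. To verify that the glued $\chi$ is injective in the labelled sense, I would case-split on a pair of distinct vertices $u,v$ in the glued graph with at least one unlabelled: (i) both coming from the same $F_j$, handled by injectivity of $\chi_j$; (ii) both unlabelled but from different $F_j$'s, where a coincidence of images would have forced $(u,v)\in\eta$, contradicting distinctness in the gluing; (iii) $u=F_1(i)$ labelled and $v$ unlabelled, where $\chi(u)=w_i=\chi_j(F_j(i))$, so equality with $\chi_j(v)$ plus injectivity of $\chi_j$ would put $v$ in $\mathcal L(F_j)$, a contradiction. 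Once this case analysis is dispatched, the bijection delivers the identity.
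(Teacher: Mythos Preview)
Your proposal is correct and follows essentially the same approach as the paper: both construct the bijection $\Inj(F_1,(G,\mathbf w))\times\Inj(F_2,(G,\mathbf w))\to\coprod_{\eta}\Inj(F_1\vee_\eta F_2,(G,\mathbf w))$ by reading off $\eta$ from the coincidences of $\chi_1,\chi_2$ and gluing, with the inverse given by composing with the natural inclusions $\iota_1,\iota_2$. In fact you supply more detail than the paper does---the paper simply asserts that $\beta$ is the inverse of $\alpha$, whereas you carry out the case analysis for labelled-injectivity of the glued map and the verification that $\eta$ is recovered.
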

\begin{proof}
We give a bijection $$\alpha : \Inj(F_1, (G,\mathbf w)) \times \Inj(F_2, (G, \mathbf w)) \rightarrow \coprod_{\eta \in \pmat(F_1, F_2)} \Inj( F_1 \vee_{\eta} F_2, (G, \mathbf w)).$$
Define $\alpha(\chi_1, \chi_2)$ as follows. Let $\eta = \{ (v_1, v_2) \in (V_{F_1}\setminus \mathcal L(F_1)) \times (V_{F_2} \setminus \mathcal L(F_2)) \mid \chi_1(v_1) = \chi_2(v_2) \}$.
Let $\iota_1 \in \Inj(F_1, F_1 \vee_\eta F_2)$ and $\iota_2 \in \Inj(F_2, F_1 \vee_\eta F_2)$ be the natural inclusions.
Let $\chi \in \Inj(F_1 \vee_\eta F_2, (G, \mathbf w))$ be the unique homomorphism such that
for all $v \in V_{F_1}$, $\chi \circ \iota_1 (v) = \chi_1(v)$, and for all $v\in V_{F_2}$, 
$\chi \circ \iota_2(v) = \chi_2(v)$.
We define $\alpha(\chi_1, \chi_2) := \chi$.

To see that $\alpha$ is a bijection, we give its inverse $\beta$. Let $\eta \in \pmat(F_1, F_2)$ and $\chi \in \Inj(F_1 \vee_\eta F_2, (G, \mathbf w))$. 
Let $\iota_1 \in \Inj(F_1, F_1 \vee_\eta F_2)$ and $\iota_2 \in \Inj(F_2, F_1 \vee_\eta F_2)$ be the natural inclusions.
Define $\beta(\chi) := (\chi \circ \iota_1, \chi \circ \iota_2)$.

Then $\beta$ is the inverse of $\alpha$.
\end{proof}
%%% ******************* FILL IT IN FOR THE FULL VERSION **************************

We can now prove Lemma~\ref{lem:kgraphtoconnected}.

{\bf Lemma~\ref{lem:kgraphtoconnected} (Label-connected subgraph frequencies determine all subgraph frequencies, restated)}
%\label{lem:kgraphtoconnected}
{\it
For every $k$-labelled graph $F'$ with $|V_{F'} \setminus \mathcal L(F')| \leq t$, there is a polynomial $\delta_{F'} \in \mathbb Z[(X_F)_{F \in \Conn_k^{t}}]$ such that for all graphs $G$ and $\mathbf w \in V_G^k$,
$$[F'](G, \mathbf w) = \delta_{F'}(x),$$
where $x \in \mathbb Z^{\Conn_k^{t}}$ is given by $x_{F} = [F](G, \mathbf w)$.
}

\begin{proof}
By induction on the number of connected components of $F'\setminus \mathcal L(F')$.
If $F'$ is \konn, then we take $\delta_{F'}(\mathbf X) = X_{F'}$.

Now suppose $F'$ is label-disconnected. Write $F' = F_1 \vee F_2$ where $F_1$ and $F_2$ are both $k$-labelled graphs, and $F_1\setminus \mathcal L(F_1)$ and $F_2\setminus \mathcal L(F_2)$ have fewer connected components.

By equation~\eqref{kveeidentity}, for all $G$ and $\mathbf w$,
\begin{align*}
[F_1 \vee F_2](G, \mathbf w) = [F_1](G, \mathbf w)\cdot[F_2](G, \mathbf w) - \sum_{\emptyset \neq \eta \in \pmat(F_1, F_2)} [F_1 \vee_\eta F_2](G, \mathbf w).
%&[F_1 \vee F_2](G, \mathbf w) = [F_1](G, \mathbf w)\cdot[F_2](G, \mathbf w)\\
%&\quad\quad\quad\quad\quad\quad\quad\quad\quad\quad- \sum_{\emptyset \neq \eta \in \pmat(F_1, F_2)} [F_1 \vee_\eta F_2](G, \mathbf w).
\end{align*}
Observe that for any $\eta \neq \emptyset$, each graph $F_1 \vee_\eta F_2$ has at least one fewer \konn  component than $F_1 \vee F_2 = F'$. Thus, by induction hypothesis, we may take
$$\delta_{F'}(\mathbf X) = \delta_{F_1}(\mathbf X) \cdot \delta_{F_2}(\mathbf X) - \sum_{\emptyset \neq\eta \in \pmat(F_1, F_2)} \delta_{F_1 \vee_\eta F_2}(\mathbf X).$$
This completes the proof of the lemma.
\end{proof}

%The next lemma (see~\cite{BT} for an elegant proof) gives the existence of certain families
%of polynomials.
%\begin{lemma}
%Let $q$ be a prime power. There exists a polynomial $R(X) \in \Z[X]$
%such that for for all $x \in \Z$, $R(x) \in \{0, 1\}$, with $R(x) = 1$ if and
%only if $x = 0 \mod q$.
%\end{lemma}
%We reproduce the proof from~\cite{Al-Gore}. Let $q = q_0^l$ with 
%$q_0$ prime. 

\subsection{Proof of Theorem~\ref{thm:extendcount}}

{\bf Theorem~\ref{thm:extendcount} (restated)}
{\it
Let $q$ be a prime, let $k, b >0 $ be integers and let $a \geq (q-1)\cdot b \cdot|\Conn_{k+1}^{b}|$.
There is a function
$$\lambda: \Types(k+1) \times \Z_q^{\Conn_{k+1}^{b}} \times \Types(k) \times \Z_q^{\Conn_k^{a}} \rightarrow \Z_q$$
such that for  all $\tau' \in \Types(k+1)$, $f' \in \Z_q^{\Conn_{k+1}^{b}}$,  $\tau \in \Types(k)$, $f \in \Z_q^{\Conn_k^a}$, it holds that for every graph $G$, and every $w_1, \ldots, w_k \in V_G$ with $\type_G(\mathbf w) = \tau$ and $\freq_G^{a}(\mathbf w) = f$, the cardinality of the set
$$\{v \in V_G : \type_G(\mathbf w, v) = \tau' \wedge \freq_G^{b}(\mathbf w, v) = f'\}$$
is congruent to $\lambda(\tau', f', \tau, f) \mod q.$
}

\begin{proof}
We describe the function $\lambda(\tau', f', \tau, f)$ explicitly.
If $\tau'$ does not extend $\tau$, then we set $\lambda(\tau', f', \tau, f) = 0$.

Now assume $\tau'$ extends $\tau$.
We take cases on whether $k+1$ is a singleton in $\Pi_{\tau'}$ or not.

{\bf Case 1:} $\{k+1\} \in \Pi_{\tau'}$. In this case, there is an $I \subseteq [k]$
such that $\type_G(w_1, \ldots, w_k, v) = \tau'$ if and only if $v \not\in \{w_1, \ldots, w_k\}$
and $(v, w_i) \in E_G\  \Leftrightarrow\  i \in I$ (explicitly, $I = \{ i \in [k] \mid \{\{k+1\}, \Pi_{\tau'}(i)\} \in E_{\tau'} \}$).

Let for each $u,v \in V_G$, let $x_{uv} \in \{0,1\}$, where
$x_{uv} = 1$ if and only if $u$ is adjacent to $v$ in $G$.

Then, using the fact that $q$ is prime, the number (mod $q$) of $v$ with $\type_G(\mathbf w, v) = \tau'$ and $\freq_G(\mathbf w, v) = f'$ can be compactly expressed as (doing arithmetic mod $q$):
\begin{align*}
&\modsum_{v \in V_G\setminus \{w_1, \ldots, w_k\}} \prod_{i \in I} x_{vw_i} \prod_{j \in [k]\setminus I} (1- x_{vw_j})
\prod_{F \in \Conn_{k+1}^b} \left(1 - \left( [F]_q(G, \mathbf w, v)  - f'_F\right)^{q-1}\right)
%&= \modsum_{v \in V_G\setminus \{w_1, \ldots, w_k\}} \prod_{i \in I} x_{vw_i} \prod_{j \in [k]\setminus I} (1- x_{vw_j})
%\prod_{F \in \Conn_{k+1}^a} \left(1 - \left( \sum_{\chi \in \Inj_{(\mathbf w, v)}(F, \K_n)} \prod_{\{u, u'\} \in E_F} x_{\chi(u)\chi(u')}     - f_F\right)^{p-1}\right)
\end{align*}
Expanding, the expression $\prod_{i \in I} x_{vw_i} \prod_{j \in [k]\setminus I} (1- x_{vw_j})$ may be expressed in
the form $\sum_{S \subseteq [k]} b_S \prod_{i \in S} x_{vw_i}$.
Using Lemma~\ref{lem:productinj}, the expression
$\prod_{F \in \Conn_{k+1}^b} \left(1 - \left( [F]_q(G, \mathbf w, v)  - f'_F\right)^{q-1}\right)$ may be
expressed in the form $\sum_{j} c_j [F_j]_q(G, \mathbf w, v)$, where each $F_j$ is a $k+1$-labelled graph with
at most $|\Conn_{k+1}^b|\cdot b \cdot (q-1) \leq a$ vertices.

Thus we may rewrite the expression for $\lambda(\tau', f', \tau, f)$ as:
\begin{align*}
&\sum_{v \in [n]\setminus\{w_1, \ldots, w_k\}}\left(\sum_{S} b_S\prod_{i \in S} x_{vw_i} \right) \left(\sum_{j} c_j [F_j]_q(G, \mathbf w, v) \right)\\
&= \sum_{S, j} b_S c_j \sum_{v\in [n]\setminus\{w_1, \ldots, w_k\}} \left(\left(\prod_{i\in S} x_{vw_i}\right) [F_j]_q(G, \mathbf w, v)\right)\\
&= \sum_{S, j} b_S c_j [F'_{S,j}]_q(G, \mathbf w),
\end{align*}
where $F'_{S,j}$ is the $k$-labelled graph obtained from $F_j$ by
\begin{enumerate}
\item[(a)] For each $i \in S$, adding an edge between the vertex labelled $k+1$ and the vertex labelled $i$, and
\item[(b)] Removing the label from the vertex labelled $k+1$.
\end{enumerate}

Finally, note that by Lemma~\ref{lem:kgraphtoconnected}, $[F'_{S,j}]_q(G, \mathbf w)$ is determined by $\freq^a_G(\mathbf w)$.

{\bf Case 2:} $\{k+1\}\not\in \Pi_{\tau'}$. This case is much
easier to handle. Pick any $j \in [k]$ such that $\Pi_{\tau'}(j) =
\Pi_{\tau'}(k+1)$. Then there is only one $v \in V_G$ such that
$\type_G(\mathbf w, v) = \tau'$ (namely, $w_j$).

Then $\lambda(\tau', f', \tau, f) = 1$ if and only if for all $F' \in \Conn_{k+1}^{b}$, $f'_{F'} = f_{F}$, where $F\in \Conn_{k}^{b}$ is the graph obtained by identifying the vertex labelled $k+1$ with the vertex labelled $j$, and labelling this new vertex $j$. Otherwise $\lambda(\tau', f', \tau, f) = 0$.

This completes the definition of our desired function $\lambda$.
\end{proof}

\section{The Distribution of Labelled Subgraph Frequencies mod q}
\label{sec:indep}

In this section, we prove Theorem~\ref{thm:superindep}. As in Section~\ref{sec:equi},
the proof will be via an intermediate theorem (Theorem~\ref{thm:indep-subgraphs}) that proves the equidistribution of the number
of {\em copies} of labelled subgraphs in $G(n,p)$.

\subsection{Equidistribution of labelled subgraph copies}

First, we gather some simple observations about injective homomorphisms from \konn graphs for later use (the proofs are simple and are omitted).
%{\bf FIX THIS + CHECK USAGE + CHANGE ALL $E_i$ to $E_j$ in the proof} 
\begin{proposition}[Simple but delicate observations about \konn graphs]
\label{remark:konn}
Let $F, F' \in \Conn_I^t$.
Let $G$ be a graph and let $\mathbf w \in V_G^I$ with all
$(w_i)_{i \in I}$ distinct.
%Let $u \in V_G \setminus \{ w_i: i \in I\}$.
\begin{enumerate}
\item If $E \in \Cop(F, (G, \mathbf w))$, the $|E| = |E_F|$.
\item If $F \not\cong F'$, we have $\Cop(F, (G, \mathbf w)) \cap \Cop(F', (G, \mathbf w)) = \emptyset$.
\item Let $\chi_1, \ldots, \chi_r \in \Inj(F, (G, \mathbf w))$ be such that
for any distinct $j, j' \in [r]$, $\chi_j(V_F\setminus \mathcal L(F)) \cap \chi_{j'}(V_F\setminus \mathcal L(F)) = \emptyset$. Let
$\chi \in \Inj(F', (G,\mathbf w))$. Suppose $\chi(E_{F'}) \subseteq (\cup_j \chi_j(E_F))$. Then there is a $j \in [r]$ such that $\chi(E_{F'}) \subseteq \chi_j(E_F)$.
%\item $\Cop(F, (G, \mathbf w)) \cap \Cop(H, (G, \mathbf w, u)) = \emptyset$.
%\item Let $\chi_1, \ldots, \chi_r \in \Inj(F, (G, \mathbf w))$ be such that
%for any distinct $i, j \in [r]$, $\chi_i(V_F) \cap \chi_j(V_F) = \emptyset$. Let
%$\chi \in \Inj(F', (G,\mathbf w))$. Suppose $\chi(E_{F'}) \subseteq (\cup_i \chi_i(E_F))$. Then there is an $i \in [r]$ such that $\chi(E_{F'}) \subseteq \chi_i(E_F)$.
\end{enumerate}
\end{proposition}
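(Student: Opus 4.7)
The plan is to treat the three observations in turn, exploiting throughout the fact that by definition the labelled vertices of any $I$-labelled graph form an independent set, so every edge of $F$ (or $F'$) has at least one unlabelled endpoint. Observation (1) is then immediate: any $\chi \in \Inj(F, (G, \mathbf w))$ is injective on all pairs not both labelled, and since the $w_i$ are distinct, $\chi$ is in fact injective on all of $V_F$; thus the induced map on edges is a bijection $E_F \to \chi(E_F) = E$, giving $|E| = |E_F|$.

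For observation (2), I would argue that the pair $(E, \mathbf w)$ reconstructs $F$ (and the injection $\chi$) up to isomorphism. Concretely, let $V(E)$ be the set of endpoints of edges in $E$. The unlabelled vertices of $F$ appearing in $V(E)$ correspond bijectively to $V(E) \setminus \{w_1, \ldots, w_{|I|}\}$, each $F(i)$ with $w_i \in V(E)$ corresponds to $w_i$, and the edges of $F$ are pulled back along this correspondence. Since label-connectedness forces $F \setminus \mathcal{L}(F)$ to be a single (possibly empty) connected subgraph, this pins down $F$; the only loose end is the degenerate situation of extra isolated labelled vertices or $E_F = \emptyset$, which does not arise in the applications where $|E_F| \geq 1$ is in force. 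Applying the same recovery procedure to $F'$ yields the same labelled graph, so $F \cong F'$, contradicting $F \not\cong F'$.

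Observation (3) is where I expect the main technical friction. The key local principle is that, because labelled vertices are independent, each edge $e \in \chi_j(E_F)$ carries an unlabelled endpoint lying in $\chi_j(V_F \setminus \mathcal{L}(F))$, and the disjointness hypothesis means these clusters are pairwise disjoint across $j$. Consequently each edge of $\bigcup_j \chi_j(E_F)$ is tagged by a unique index $j$. Given $\chi \in \Inj(F', (G, \mathbf w))$ with $\chi(E_{F'}) \subseteq \bigcup_j \chi_j(E_F)$, I would define, for each unlabelled $u \in V_{F'}$ incident to an edge of $E_{F'}$, the unique $j(u)$ with $\chi(u) \in \chi_{j(u)}(V_F \setminus \mathcal{L}(F))$. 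The crux is then to show $j(\cdot)$ is constant: for any unlabelled-unlabelled edge $\{u, u'\} \in E_{F'}$ its image lies in a single $\chi_{j'}(E_F)$, forcing $j(u) = j(u') = j'$; since $F' \setminus \mathcal{L}(F')$ is connected, this propagates across all unlabelled vertices of $F'$, and every unlabelled-labelled edge in $\chi(E_{F'})$ inherits the same tag because its unlabelled endpoint belongs to only one cluster. Hence $\chi(E_{F'}) \subseteq \chi_j(E_F)$ for this common $j$. The main obstacle will be carefully handling the boundary cases where $F'$ has at most one unlabelled vertex, or where all edges of $E_{F'}$ run between an unlabelled vertex and labelled vertices: there the propagation argument is vacuous and the conclusion must be read off directly from the single cluster containing $\chi(u)$.
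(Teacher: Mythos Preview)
The paper omits the proof of this proposition entirely, remarking only that ``the proofs are simple and are omitted''; your argument correctly supplies what is missing, and the line of reasoning you give for each part is exactly the natural one. One small clarification on observation (2): isolated labelled vertices are not in fact a loose end, since every $I$-labelled graph carries all $|I|$ labelled vertices by definition, so the reconstruction from $(E,\mathbf w)$ simply adjoins them whether or not they appear in $V(E)$; the only genuine degeneracy is $|E_F|=0$ (where two non-isomorphic edgeless graphs in $\Conn_I^t$ both have $\Cop=\{\emptyset\}$), and as you note this is excluded in every place the paper invokes the proposition.
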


We can now state and prove an equidistribution theorem for
the number of copies of labelled subgraphs in a conditioned random graph.
Theorem~\ref{thm:superindep} will follow from this.

\begin{theorem}
\label{thm:indep-subgraphs}
Let $A$ be a graph with $V_A \subseteq [n]$ and $|V_A| \leq n'$. Let $\mathbf w = (w_1, \ldots, w_k) \in V_A^k$
with $w_1, \ldots, w_k$ distinct.
%Let $\Pi' \in \Partitions([k+1])$ be given by $\Pi \cup \{ \{k+1\}\}$.
Let $u_1, \ldots u_s \in V_A \setminus \{w_i: i \in I\}$ be distinct.
Let $F_1, \ldots, F_\ell$ be distinct $k$-labelled {\konn} graphs, with $1 \leq |E_{F_i}| \leq d$.
Let $H_1, \ldots, H_{\ell'}$ be distinct $k+1$-labelled {\konn} graphs dependent on label $k+1$,
with $1 \leq |E_{H_i}| \leq d$.

Let $G \in G(n,p \mid V_A, E_A)$. Then the distribution of
$$((\langle F_i\rangle_q(G, \mathbf w))_{i \in [\ell]}, (\langle H_{i'}\rangle_q(G, \mathbf w, u_{j'}))_{i' \in [\ell'], j' \in [s]})$$ on $\Z_q^{\ell + s\ell'}$ is
%$(\langle F_1\rangle_q(G, \mathbf w), \ldots, \langle F_\ell \rangle_q(G, \mathbf w))$ on $\Z_q^\ell$ is
$2^{-\Omega_{q,p,d}(n-n') + (\ell + \ell's)\log q}$-close to uniform in statistical distance.
\end{theorem}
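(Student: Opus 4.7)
The plan is to follow the blueprint of the proof of Theorem~\ref{thm:uniform}: apply the Vazirani XOR lemma (Lemma~\ref{lem:xor}) to reduce equidistribution to bounding a single Fourier coefficient, realize that coefficient as the expectation $\E[\omega^{Q(\mathbf z)}]$ of a polynomial $Q$ in the unexposed edge variables, and then invoke Lemma~\ref{lem:polybias1} after exhibiting a large family of disjoint maximum-degree monomials. Concretely, I would fix an arbitrary nonzero $(\mathbf c, \mathbf d) \in \Z_q^{\ell} \times \Z_q^{\ell' \times s}$, set
$$R = \modsum_{i} c_i \langle F_i \rangle_q(G, \mathbf w) + \modsum_{i', j'} d_{i', j'} \langle H_{i'} \rangle_q(G, \mathbf w, u_{j'}),$$
and aim for $|\E[\omega^R]| \leq 2^{-\Omega_{q,p,d}(n-n')}$; Lemma~\ref{lem:xor} then converts this, via a union bound over $(\mathbf c, \mathbf d)$, into the claimed statistical-distance bound.

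Writing $\mathbf z = (z_e)_{e \in {[n] \choose 2}\setminus {V_A \choose 2}}$ for the $p$-biased free-edge variables and substituting $z_e = \mathbf 1_{e \in E_A}$ for the exposed edges $e \in {V_A \choose 2}$, one obtains $R = Q(\mathbf z)$, where each potential copy $E$ of some $F_i$ in $(\K_n, \mathbf w)$ (respectively of some $H_{i'}$ in $(\K_n, \mathbf w, u_{j'})$) whose ${V_A \choose 2}$-edges all lie in $E_A$ contributes a monomial $\prod_{e \in E \setminus {V_A \choose 2}} Z_e$ with coefficient $c_i$ (respectively $d_{i', j'}$). Let $d^*$ be the maximum of $|E_{F_i}|$ over $i$ with $c_i \neq 0$ together with $|E_{H_{i'}}|$ over $i'$ such that some $d_{i', j'} \neq 0$. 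Since labels in any labelled graph induce an independent set, every edge of a copy touches at least one non-label vertex; consequently a copy contributes a monomial of full degree $|E_F|$ if and only if all its non-label vertices lie in $[n] \setminus V_A$.

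Assume without loss of generality that $d^*$ is attained by some $F_{i_0}$ with $c_{i_0} \neq 0$ (the case of some $H_{i'_0}$ with $d_{i'_0, j'_0} \neq 0$ is symmetric, with $u_{j'_0}$ additionally fixed as a root of the label-$(k{+}1)$-dependent copies). Using $n - n' \geq n/2$ and the bounded size of $F_{i_0}$, I would greedily select homomorphisms $\chi_1, \ldots, \chi_r \in \Inj(F_{i_0}, (\K_n, \mathbf w))$ mapping all non-label vertices into $[n] \setminus V_A$ with pairwise disjoint non-label vertex images, yielding $r = \Omega((n - n')/d)$ edge-disjoint copies $E_j = \chi_j(E_{F_{i_0}})$ of size $d^*$, each disjoint from ${V_A \choose 2}$. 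Setting $\mathcal E = \{E_1, \ldots, E_r\}$, verifying the hypotheses of Lemma~\ref{lem:polybias1} reduces to showing: (a) no other copy produces the monomial $\prod_{e \in E_j} Z_e$, so its coefficient is $c_{i_0} \neq 0$; and (b) for every $S \in \mathcal F \setminus \mathcal E$, $|S \cap \bigcup_j E_j| < d^*$. The crucial geometric observation for both checks is that any competing copy coming from some $(H_{i'}, u_{j'})$ must, by the dependence of $H_{i'}$ on label $k{+}1$, contain an edge incident to $u_{j'}$, yet no such edge lies in any $E_j$ (since $u_{j'} \in V_A \setminus \{w_1, \ldots, w_k\}$ while $\chi_j$ has image in $\{w_1,\ldots,w_k\} \cup ([n] \setminus V_A)$); and any competing $k$-labelled copy whose monomial has full degree $d^*$ and support in $\bigcup_j E_j$ must, by Proposition~\ref{remark:konn}(2)--(3), coincide with some $E_j$.

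With these checks in hand, Lemma~\ref{lem:polybias1} yields $|\E[\omega^{Q(\mathbf z)}]| \leq 2^{-\Omega_{q,p,d}(r)} = 2^{-\Omega_{q,p,d}(n - n')}$, and Lemma~\ref{lem:xor} produces the desired $2^{-\Omega_{q,p,d}(n - n') + (\ell + s\ell')\log q}$ bound. The hard part will be step (b): one must rule out that a monomial $S$ arising from a genuinely different source --- a different $F_i$, the same $F_{i_0}$ with an altered injective image, or some $H_{i'}$ rooted at some $u_{j'}$ --- could lie entirely inside $\bigcup_j E_j$. Navigating between the $k$-labelled and $(k{+}1)$-labelled worlds here requires a slight extension of Proposition~\ref{remark:konn}(3) that treats copies of graphs with different label sets uniformly; the combination of the label-independent-set property, the label-$(k{+}1)$-dependence of the $H_{i'}$, and the distinctness of the $u_{j'}$ within $V_A \setminus \{w_i\}$ is precisely what makes every sub-case go through.
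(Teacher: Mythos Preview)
Your proposal is correct and follows essentially the same route as the paper's proof: reduce via Lemma~\ref{lem:xor}, realize the relevant Fourier coefficient as $\E[\omega^{Q(\mathbf z)}]$ for a polynomial $Q$ in the unexposed edge variables, greedily pack vertex-disjoint maximum-degree copies with non-label images in $[n]\setminus V_A$, and verify the hypotheses of Lemma~\ref{lem:polybias1} using Proposition~\ref{remark:konn} together with the $u_{j'}$-incidence observation. The only cosmetic difference is that the paper splits explicitly into the cases $d_1^* < d_2^*$ and $d_1^* \ge d_2^*$ (where $d_1^*,d_2^*$ are the top edge-counts among the active $F_i$'s and $H_{i'}$'s), while you fold this into a ``without loss of generality''; be aware the two cases are not perfectly symmetric---in the $H$-maximal case $F$-interference is dispatched by a degree drop rather than by incidence, and one must check that the coefficient of each selected $E_j$ is the single term $d_{i_0',j_0'}$ rather than a potentially cancelling sum over several $(i',j')$---but every ingredient needed for both cases is already present in your outline.
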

\begin{proof}
%First notice that we may reduce to that case where $w_1, \ldots, w_k$ are all distinct (replace $\mathbf w$ by $\mathbf w/\Pi$, replace each $F_i$ by $F_i/\Pi$, replace each $H_i$ by $H_i/\Pi'$, and then use Equation~\eqref{eq:partitionedcop}).
By the Vazirani XOR lemma (Lemma~\ref{lem:xor}), it suffices to show that for any nonzero $(c, c') \in \Z_q^\ell\times \Z_q^{\ell'\times s}$,
we have
$\left|\E\left[ \omega^R\right] \right| \leq 2^{-\Omega_{q,p,d}(n-n')}$, 
where $$R := \modsum_{i \in [\ell]}c_i \langle F_i\rangle_q(G, \mathbf w) + \modsum_{i' \in [\ell']}\modsum_{j' \in [s]} c'_{i'j'} \langle H_{i'} \rangle_q(G, \mathbf w, u_{j'})$$
and $\omega \in \mathbb C$ is a primitive $q^{\rm{th}}$-root of unity.

We will show this by appealing to Lemma~\ref{lem:polybias1}. Let
$m = {n \choose 2} - {a \choose 2}$.
Let $\mathbf z \in \{0,1\}^{[n] \choose 2}$ be the
random variable where, for each $e \in { [n] \choose 2}$, $z_e = 1$ if and only if edge $e$ is present in $G$. Thus,
independently for each $e \in {[n] \choose 2} \setminus {V_A \choose 2}$, $\Pr[z_e = 1] = p$, while for $e \in {V_A \choose 2}$, the value of $z_e$ is either identically 1 or identically 0 (depending on whether $e \in E_A$ or not).

We may now express
$R$ in terms of the $z_e$. We have,
\begin{align*}
R &= \modsum_{i \in [\ell]}c_i \langle F_i\rangle_q(G, \mathbf w) + \modsum_{i' \in [\ell']}\modsum_{j' \in [s]} c'_{i'j'} \langle H_{i'} \rangle_q(G, \mathbf w, u_{j'})\\
&= \modsum_{i \in [\ell]} c_i \sum_{E \in \Cop(F_i, (\K_n, \mathbf w))} \prod_{e \in E} z_e + \modsum_{i' \in [\ell']}\modsum_{j' \in [s]} c'_{i'j'} \sum_{E \in \Cop(H_{i'}, (\K_n, \mathbf w, u_{j'}))} \prod_{e \in E} z_e\\
&= \modsum_{E \in \mathcal F_1} c_E \prod_{e \in E} z_e + \modsum_{E \in \mathcal F_2} c'_E \prod_{e\in E} z_e,
\end{align*}
where $\mathcal F_1 \subseteq 2^{[n] \choose 2}$ is the set $\bigcup_{i \in [\ell]: c_i \neq 0} \Cop(F_i, (\K_n, \mathbf w))$, $\mathcal F_2$ is the set $\bigcup_{i' \in [\ell'], j' \in [s]: c'_{i'j'} \neq 0} \Cop(H_{i'}, (\K_n, \mathbf w, u_{j'}))$, for each $E \in \mathcal F_1$, $c_E = c_i$ where $i \in [\ell]$ is such that $E \in \Cop(F_i, (\K_n, \mathbf w))$ (note that by Proposition~\ref{remark:konn} there is exactly one such $i$), and similarly, for $E \in \mathcal F_2$, $c'_E = \sum_{i' \in [\ell'], j'\in [s]: E \in \Cop(H_{i'}, (\K_n, \mathbf w, u_{j'})} c'_{i'j'}$. Thus if $E$ is such that there is a unique $(i', j') \in [\ell']\times[s]$ for which $E \in \Cop(H_{i'}, (\K_n, \mathbf w, u_{j'}))$ and $c'_{i', j'} \neq 0$, then $c'_E \neq 0$.

Let $Q(\mathbf Z) \in \Z_q[\mathbf Z]$, where $\mathbf Z = (Z_e)_{e \in {[n]\choose 2} \setminus {V_A \choose 2}}$,  be the polynomial
$$\sum_{E \in \mathcal F_1} c_E\prod_{e \in E \cap {V_A \choose 2}} z_e \prod_{e \in E \setminus {V_A \choose 2}} Z_e + \sum_{E \in \mathcal F_2} c'_E\prod_{e \in E \cap {V_A \choose 2}} z_e \prod_{e \in E \setminus {V_A \choose 2}} Z_e.$$

%Let $\mathbf y$
%be the random variable over $\{0,1\}^{{[n] \choose 2}\setminus{V_A \choose 2}}$ such that independently for each $e$, $\Pr[y_e = 1] = p$.
Let $\mathbf{\widehat{z}} \in \{0,1\}^{{[n] \choose 2} \setminus {V_A \choose 2}}$ be the random variable $\mathbf z$ restricted to the coordinates indexed by ${[n]\choose 2}\setminus {V_A \choose 2}$ (thus each coordinate of $\mathbf{\widehat z}$ independently equals $1$ with probability $p$). Then $R = Q(\mathbf{\widehat{z}})$.
We wish to show that
\begin{equation}
\label{eq:indep-uniform}
\left| \E\left[ \omega^{Q(\mathbf{ \widehat z})}\right] \right| \leq 2^{-\Omega_{q,p,d}(n - n')}.
\end{equation}

We do this by demonstrating that the polynomial $Q(\mathbf Z)$ satisfies the hypotheses of Lemma~\ref{lem:polybias1}.

Let $d_1^* = \max_{i: c_i \neq 0} |E_{F_i}|$.
Let $d_2^* = \max_{i', j': c'_{i'j'} \neq 0} |E_{H_{i'}}|$.
We take cases depending on whether $d_1^* < d_2^*$ or $d_1^* \geq d_2^*$.

{\bf Case 1:} Suppose $d_1^* < d_2^*$.
Let $i_0', j_0'$ be such that $c'_{i_0'j_0'} \neq 0$ and $|E_{H_{i_0'}}|  = d_2^*$.
Then $Q(\mathbf Z)$ may be written as $ \sum_{E \in \mathcal F} c'_E \prod_{e \in E} Z_e + Q'(\mathbf Z)$, where $\mathcal F = \{ E \in \mathcal F_2:
E \cap {V_A \choose 2} = \emptyset \}$ and $\deg(Q') < d_2^*$.

Let $\chi_1, \chi_2, \ldots, \chi_r \in \Inj(H_{i_0'}, (\K_n, \mathbf w, u_{j_0'}))$ be a collection of homomorphisms such that:
\begin{enumerate}
\item For all $j \in [r]$, we have  $\chi_j(V_{H_{i_0'}} \setminus \mathcal L(H_{i_0'})) \subseteq [n] \setminus V_A$.
\item For all distinct $j, j' \in [r]$, we have $\chi_j(V_{H_{i_0'}}\setminus\mathcal L(H_{i_0'})) \cap \chi_{j'}(V_{H_{i_0'}}\setminus\mathcal L(H_{i_0'})) = \emptyset$.
\end{enumerate}
Such a collection can be chosen greedily so that
$r = \Omega(\frac{n-n'}{d})$.
Let $E_j \in \Cop(H_{i_0'}, (\K_n, \mathbf w, u_{j_0'}))$ be given by $\chi_j(E_{H_{i_0'}})$.
Let $\mathcal E$ be the family of sets $\{E_1, \ldots, E_r\} \subseteq \mathcal F$. We observe the following properties of the $E_j$:
\begin{enumerate}
\item For each $j \in [r]$, $|E_j| = d_2^*$ (since $\chi_j$ is injective and $w_1, \ldots, w_k, u_{j_0'}$ are distinct).
\item For each $j \in [r]$, $c'_{E_j} \neq 0$. This is because there is a unique $(i', j')$ (namely $(i_0', j_0')$) for which $c_{i'j'} \neq 0$ and $E_j \in \Cop(H_{i'}, (\K_n, \mathbf w, u_{j'}))$. Indeed, if $j' \neq j_0'$, then each $E^* \in
\Cop(H_{i'}, (\K_n, \mathbf w, u_{j'}))$ has some element incident on $u_{j'}$ (while $E_j$ does not). On the other hand, if $j' = j_0'$ and $i' \neq i_0'$, then Proposition \ref{remark:konn} implies that $\Cop(H_{i'}, (\K_n, \mathbf w, u_{j'})) \cap \Cop(H_{i_0'}, (\K_n, \mathbf w, u_{j_0'})) = \emptyset$.
\item For distinct $j, j' \in [r]$, $E_j \cap E_{j'} = \emptyset$ (by choice of the $\chi_j$).
\item For any $S \in \mathcal F \setminus \mathcal E$,
$|S \cap (\cup_j E_j)| < d_2^*$.
To see this, take any $S \in \mathcal F \setminus \mathcal E$ and suppose $|S \cap (\cup_j E_j)| \geq d_2^*$.
Let $i' \in [\ell'], j' \in [s]$ be such that $S \in \Cop(H_{i'}, (\K_n, \mathbf w, u_{j'}))$. Let $\chi \in \Inj(H_{i'}, (\K_n, \mathbf w, u_{j'}))$
with $\chi(E_{H_{i'}}) = S$. By choice of $d_2^*$,
we know that $|S| \leq d_2^*$.
Therefore, the only way that $|S \cap (\cup_j E_j)|$ can be $\geq d_2^*$ is
if (a) $|S| = d_2^*$, and (b) $S \cap (\cup_j E_j) = S$, or in other words, $S \subseteq (\cup_j E_j)$. Since $H_{i'}$ is dependent on label $k+1$, we know
that $S$ has some element incident on vertex $u_{j'}$, and thus (b) forces $j' = j_0'$ (otherwise no $E_j$ is incident
on $u_{j'}$). Now by Proposition~\ref{remark:konn}, this implies that $S \subseteq E_j$ for some $j$. But since
$|E_j| = |S|$, we have $S = E_j$, contradicting our choice of $S$.
Therefore, $|S \cap (\cup_j E_j)| < d_2^*$ for any $S \in \mathcal F \setminus \mathcal E$.
\end{enumerate}

It now follows that
 $Q(\mathbf Z), \mathcal F$ and $\mathcal E$ satisfy the hypothesis of
Lemma~\ref{lem:polybias1}. Consequently, (noting that $d_2^*\leq d$) Equation~\eqref{eq:indep-uniform} follows, completing the proof in Case 1.

{\bf Case 2:} Suppose $d_1^* \geq d_2^*$.
Let $i_0$ be such that $c_{i_0} \neq 0$ and $|E_{F_{i_0}}|  = d_1^*$.
Then $Q(\mathbf Z)$ may be written as $ \sum_{E \in \mathcal F} (c_E + c'_E) \prod_{e \in E} Z_e + Q'(\mathbf Z)$, where $\mathcal F = \{ E \in \mathcal F_1 \cup F_2:
E \cap {V_A \choose 2} = \emptyset \}$ and $\deg(Q') < d_1^*$.

Let $\chi_1, \chi_2, \ldots, \chi_r \in \Inj(F_{i_0}, (\K_n, \mathbf w))$ be a collection of homomorphisms such that:
\begin{enumerate}
\item For all $j \in [r]$, we have  $\chi_j(V_{F_{i_0}} \setminus \mathcal L(F_{i_0})) \subseteq [n] \setminus V_A$.
\item For all distinct $j, j' \in [r]$, we have $\chi_j(V_{F_{i_0}}\setminus\mathcal L(F_{i_0})) \cap \chi_{j'}(V_{F_{i_0}}\setminus\mathcal L(F_{i_0})) = \emptyset$.
\end{enumerate}
Such a collection can be chosen greedily so that
$r = \Omega(\frac{n-n'}{d})$.
Let $E_j \in \Cop(F_{i_0}, (\K_n, \mathbf w))$ be given by $\chi_j(E_{F_{i_0}})$.
Let $\mathcal E$ be the family of sets $\{E_1, \ldots, E_r\} \subseteq \mathcal F$. We observe the following properties of the $E_j$:
\begin{enumerate}
\item For each $j \in [r]$, $|E_j| = d_1^*$ (since $\chi_j$ is injective and $w_1, \ldots, w_k$ are distinct).
\item For each $j \in [r]$, $c_{E_j} + c'_{E_j} \neq 0$. This is because $c_{E_j} = c_{i_0} \neq 0$ and for any $(i', j')$, $E_j \not\in \Cop(H_{i'}, (\K_n, \mathbf w, u_{j'}))$ (and so $c'_{E_j} = 0$). To see the latter claim, note that each $E^* \in \Cop(H_{i'}, (\K_n, \mathbf w, u_{j'}))$ has an element incident on $u_{j'}$ (which $E_j$ does not).
\item For distinct $j, j' \in [r]$, $E_j \cap E_{j'} = \emptyset$ (by choice of the $\chi_j$).
\item For any $S \in \mathcal F \setminus \mathcal E$,
$|S \cap (\cup_j E_j)| < d_1^*$.
To see this, take any $S \in \mathcal F \setminus \mathcal E$ and suppose $|S \cap (\cup_j E_j)| \geq d_1^*$.
\begin{enumerate}
\item If $S \in \mathcal F_1$, then let $i \in [\ell]$ be such that
$S \in \Cop(F_{i}, (\K_n, \mathbf w))$. Let $\chi \in \Inj(F_{i}, (\K_n, \mathbf w))$
with $\chi(E_{F_{i}}) = S$. We know that $|S| \leq d_1^*$.
Therefore, the only way that $|S \cap (\cup_j E_j)|$ can be $\geq d_1^*$ is
if (1) $|S| = d_1^*$, and (2) $S \cap (\cup_j E_j) = S$, or in other words, $S \subseteq (\cup_j E_j)$.
However, by Proposition~\ref{remark:konn}, this implies that $S \subseteq E_j$ for some $j$. But since
$|E_j| = |S|$, we have $S = E_j$, contradicting our choice of $S$.
\item If $S \in \mathcal F_2$, then let $i' \in [\ell'], j' \in [s]$ be such that $S \in \Cop(H_{i'}, (\K_n, \mathbf w, u_{j'}))$. Let $\chi \in \Inj(H_{i'}, (\K_n, \mathbf w, u_{j'}))$
with $\chi(E_{H_{i'}}) = S$. We know that $|S| \leq d_2^* \leq d_1^*$.
Now $S$ has an element incident on $u_{j'}$. On the other hand
none of the $E_j$ have any edges incident on $u_{j'}$. Therefore
$|S \cap ( \cup_j E_j)| < |S| \leq d_1^*$.
\end{enumerate}
Therefore,
$|S \cap (\cup_j E_j)| < d_1^*$ for any $S \in \mathcal F \setminus \mathcal E$.
\end{enumerate}
It now follows that
 $Q(\mathbf Z), \mathcal F$ and $\mathcal E$ satisfy the hypothesis of
Lemma~\ref{lem:polybias1}. Consequently, (noting that $d_1^* \leq d$) Equation~\eqref{eq:indep-uniform} follows, completing the proof in Case 2.
\end{proof}

\subsection{Proof of Theorem~\ref{thm:superindep}}

{\bf Theorem~\ref{thm:superindep} (restated) }
{\it
Let $a \geq b$ be positive integers.
Let $A$ be a graph with $V_A \subseteq [n]$ and $|V_A| \leq n' \leq n/2$. Let $G \in G(n,p \mid V_A, E_A)$. Let $\mathbf w = (w_1, \ldots, w_k) \in V_A^k$, and let $u_1, \ldots, u_s \in V_A\setminus\{w_1, \ldots, w_k\}$ be distinct. Let $\tau = \type_G(\mathbf w)$ and let $\tau_i = \type_G(\mathbf w, u_i)$ (note
that $\tau, \tau_1,\ldots, \tau_s$ are already determined by $E_A$).
Let $f$ denote the random variable $\freq_{G}^a(\mathbf w)$.
Let $f_i$ denote the random variable $\freq_{G}^b(\mathbf w, u_i)$.

Then, there exists a constant $\rho = \rho(a, q, p) > 0$, such that if
$s \leq \rho \cdot n$, then the distribution of $(f, f_1, \ldots, f_s)$ over $\Freq_n(\tau, [k], a) \times \prod_{i} \Freq_n(\tau_i, [k+1], b)$
is $2^{-\Omega(n)}$-close to the distribution of $(h, h_1, \ldots, h_s)$ generated as follows:
\begin{enumerate}
\item $h$ is picked uniformly at random from $\Freq_n(\tau, [k], a)$.
\item For each $i$, each $h_i$ is picked independently and uniformly from the set of all $f' \in \Freq_n(\tau_i, [k+1], b)$
such that $(\tau_i, f')$ extends $(\tau, h)$.
\end{enumerate}
}

\begin{proof}
Let $\mathbf v = \mathbf w/\Pi_{\tau}$.
Let $F_1, \ldots, F_\ell$ be an enumeration of the elements of $\Conn_{\Pi_{\tau}}^a$.

Let $\Pi' \in \Partitions([k+1])$ equal $\Pi_{\tau} \cup \{ \{ k+1 \} \}$. Notice that
for each $i \in [s]$, $\Pi_{\tau_i} = \Pi'$.
Let $H_1, \ldots, H_{\ell'}$ be an enumeration of those elements of $\Conn_{\Pi'}^b$ that are
dependent on label $i^*$.

By Theorem~\ref{thm:indep-subgraphs} and the hypothesis on $s$ for a suitable constant $\rho$, the distribution of
$$(g, g^1, \ldots, g^s) = ((\langle F_i\rangle_q(G, \mathbf v))_{i \in [\ell]}, (\langle H_{i'} \rangle(G,\mathbf v, u_{j'}))_{i' \in [\ell'], j' \in [s]})$$ is $2^{-\Omega(n)}$ close to uniform over $\Z_q^{\ell + \ell's}.$
Given the vector $(g, g^1, \ldots, g^s)$, we may compute the vector $(f, f_1, \ldots, f_s)$ as follows:
\begin{enumerate}
\item For $F = \K_1([k])$, we have $f_{F} = n - |\Pi_{\tau}|$.
\item For all other $F \in \Conn_k^a$, let $i \in [\ell]$ be such that $F/\Pi_{\tau} \cong F_i$. Then $f_{F} = g_{i} \cdot \aut(F_i)$.
\item For $H \in \Conn_{k+1}^b$ dependent on label $k+1$, let $i' \in [\ell']$ be such that $H/(\Pi') \cong H_i'$. Then for each $j' \in [s]$, $(f_{j'})_{H} = g^{j'}_{i'} \cdot \aut(H_{i'})$.
\item For $H \in \Conn_{k+1}^b$ not dependent on label $k+1$ and for any $j' \in [s]$, there is a {\em unique} setting
of $(f_{j'})_H$ (given the settings above) that is consistent with the fact that $(\tau_j, f_j)$ extends $(\tau, f)$.
This follows from Lemma~\ref{lem:unique-extend}.
%
%Indeed, since $H(k+1)$ is isolated, Equation~\eqref{eq:freqextend} implies that $(f_j)_H$ is determined by
%the other entries of $f_j$ along with the entry $f_{H^*}$, where $H^*$ is the graph obtained from $H$ by deleting $H(k+1)$.
\end{enumerate}

%Items 1-3 above are 
This implies the desired claim about the distribution of $(f, f_1, \ldots, f_s)$.
\end{proof}

\section{Concluding Remarks}

%phk
The results presented here constitute the first systematic investigation
of the asymptotic probabilities of properties expressible in first-order logic
with counting quantifiers. Moreover, these results have been established
by combining, for the first time, 
algebraic methods related to multivariate polynomials over finite fields  with the method of quantifier elimination from mathematical
logic.

We conclude with two open problems:
\begin{enumerate}
\item What is the complexity of computing the numbers $a_0, \ldots, a_{q-1}$
in Theorem~\ref{thm:Main}? We know that it is $\PSPACE$-hard to compute these numbers (it is already
$\PSPACE$-hard to tell if the asymptotic probability of a $\FO$ sentence is $0$ or $1$). Our proof shows that they may be computed in time
 $2^{2^{2^{\ldots}}}$ of height proportional to the quantifier depth of the
formula. It is likely that a more careful analysis of our approximation
of $\FOM$ by polynomials can yield better upper bounds.
\item Is there a modular convergence law for $\FO[\Mod_m]$ for arbitrary $m$?
The same obstacles that prevent the Razborov-Smolensky approach from
generalizing to $\mathsf{AC0}[\Mod_6]$ impede us.
Perhaps an answer to the above question
will give some hints for $\mathsf{AC0}[\Mod_6]$?
\end{enumerate}

\section*{Acknowledgements}
Swastik Kopparty is very grateful to Eli Ben-Sasson, Danny Gutfreund and
Alex Samorodnitsky for encouragement and stimulating discussions.
We would also like to thank Miki Ajtai, Ron Fagin, Prasad Raghavendra,
Ben Rossman, Shubhangi Saraf and Madhu Sudan for valuable discussions.

\newpage

\end{document}